\newtheorem{theorem}{Theorem}[section]
\newtheorem{lemma}[theorem]{Lemma}
\newtheorem{proposition}[theorem]{Proposition}
\newtheorem{corollary}[theorem]{Corollary}
\theoremstyle{definition}
\newtheorem{definition}[theorem]{Definition}
\newtheorem{example}[theorem]{Example}
\newtheorem{remark}[theorem]{Remark}
\theoremstyle{remark}
\numberwithin{equation}{section}
\newcommand{\wt}{\widetilde}
\newcommand{\I}{I}
\newcommand{\N}{\mathbb{N}}
\newcommand{\K}{K}
\renewcommand{\L}{\Lambda}
\renewcommand{\o}{\omega}
\renewcommand{\b}{\beta}
\newcommand{\J}{J}
\newcommand{\wh}{\widehat}
\newcommand{\e}{\varepsilon}
\newcommand{\p}{\partial}
\newcommand{\A}{\mathcal{A}}
\newcommand{\R}{\mathbb{R}}
\newcommand{\loc}{\mathrm{loc}}
\DeclareMathOperator*{\esssup}{ess\,sup}
\newcommand{\C}{\mathbb{C}}
\renewcommand{\d}{\delta}
\renewcommand{\a}{\alpha}
\newcommand{\ii}{\mathcal{S}}
\newcommand{\m}{\mu}
\newcommand{\g}{\gamma}
\newcommand{\ad}{{\operatorname{ad}}}
\renewcommand{\div}{{\mathrm{div}}}
 \renewcommand{\Im}{{\mathrm{Im}}}
 \renewcommand{\rho}{ {\varrho}}
\renewcommand{\phi}{{\varphi}}
 \renewcommand{\epsilon}{ {\varepsilon}}
\renewcommand{\r}{\varrho}
\newcommand{\s}{\sigma}
\newcommand{\abs}[1]{\left\lvert#1\right\rvert}
\renewcommand{\t}{\tau}
\begin{document}

\title[Nonsmooth  H\"ormander
 vector fields ]{Nonsmooth  H\"ormander
 vector fields   \\ and their control balls }


\author{Annamaria Montanari}
\address{Dipartimento di Matematica, Universit\`a di Bologna (ITALY)}
\email{montanar@dm.unibo.it, morbidel@dm.unibo.it}
\thanks{}

\author{Daniele Morbidelli}
\thanks{}

\subjclass[2010]{Primary 53C17; Secondary 35R03}

\date{}

\dedicatory{}

\begin{abstract}  We prove a ball-box theorem for nonsmooth  H\"ormander vector
  fields of step $s\geq 2.$
\end{abstract}

\maketitle

\tableofcontents

\section{Introduction}
In this paper we give a self-contained proof of a ball-box theorem
for a family  $\{X_1, \dots, X_m\}$ of   nonsmooth vector fields
satisfying the  H\"ormander  condition. This is the third   paper, after   \cite{M} and \cite{MM}, where we investigate  ideas of the classical
article by Nagel Stein and Wainger \cite{NSW}.

Our  purpose  is
to prove a ball-box theorem  using only elementary analysis
techniques  and  at the same time to relax as much as possible the
regularity assumptions on the vector fields.
  Roughly speaking,
our results hold as soon as the commutators involved in the
H\"ormander condition are Lipschitz continuous. Moreover,   our
proof does not rely on algebraic tools, like  formal series and the
Campbell--Hausdorff formula.

To describe our work, we recall the basic ideas of \cite{NSW}. Notation and
language are more precisely described in Section
\ref{last}.
  Any control ball $B(x,r)$ associated with a family $\{X_1,\dots,X_m\}$  of H\"ormander vector fields in $\R^n$
   satisfies, for $x$ belonging to some compact set $K$ and small radius $r<r_0 $,  the
double
  inclusion
\begin{equation}\label{iioo}
 \Phi_x(Q(C^{-1}r))\subset B(x,r)\subset  \Phi_x(Q(C r)).
\end{equation}
Here, the map
  $\Phi_x$ is an exponential  of the form
 \begin{equation}
  \label{dinage}\Phi_x(h)=
 \exp(h_1U_1+\cdots+h_n U_n)(x),
 \end{equation} where  the vector fields $U_1,\dots,U_n$ are suitable
commutators of lengths $d_1,\dots,d_n$ and
$Q (r) =   \{ h\in\R^n : \max_j |h_j|^{1/d_j}< r\}.  $
Usually, \eqref{iioo} is    referred to as  a \emph{ball-box} inclusion.
 A control on the Jacobian matrix of $\Phi_x$ gives an estimate of the measure of the ball and ultimately it provides the doubling property.

 A remarkable achievement in \cite{NSW} concerns  the
 choice of the
  vector fields $U_j$ which guarantee  inclusions \eqref{iioo}
for a given control ball $B(x,r)$, see also the discussion in
\cite[p.~440]{S}.
Enumerate  as  $Y_1,\dots, Y_q$ all commutators of length at most $ s$
 and let  $\ell_i$ be the length of $Y_i.$  If the H\"ormander condition of step $s$
 is fulfilled, then the vector fields $Y_i$ span $\R^n$ at any point.
Given a multi-index $\I=(i_1, \dots, i_n)\in \{1, \dots, q\}^n=: \ii$  and its corresponding  $n-$tuple  $Y_{i_1}, \dots ,Y_{i_n}$
of commutators, let
\begin{equation}\label{massi}
 \begin{aligned}\lambda_\I(x)& =\det (Y_{i_1}, \dots ,Y_{i_n})(x)
\quad\text{and}
\quad \ell(\I)= \ell_{i_1}+\cdots +\ell_{i_n}.
\end{aligned} \end{equation}
In \cite{NSW}, the authors prove   the following fact: given a ball
$B(x,r)$, inclusion \eqref{iioo} holds with $U_1= Y_{i_1},\dots, U_n= Y_{i_n}$ if the
 $n-$tuple $\I\in \ii $
 satisfies the \emph{$\eta$-maximality} condition
\begin{equation}\label{maxx}
 |\lambda_\I(x)| r^{\ell(\I)}>\eta \max_{K\in \ii} |\lambda_K(x)| r^{\ell({K})},
\end{equation}
where $\eta\in(0,1)$ is  greater than some absolute constant.
Although the choice of the $n$-tuple  $I$ may depend on both  the
point and the radius, the constant $C$ is uniform in $x\in K$ and
$r\in(0,r_0)$.

 In \cite{M} the second author proved that   \eqref{iioo} also  holds if one changes  the map $\Phi_x$ with
the \emph{almost exponential} map
\begin{equation}\label{maso}
 E_x (h) = \exp_*(h_1 U_1)\circ\cdots\circ  \exp_*(h_n U_n)(x),
 \end{equation}
 where    $h_j\mapsto \exp_*(h_j U_j)$ is the \emph{approximate  exponential}
 of the commutator $U_j$,    whose  main feature is that
 it can be factorized as  a suitable composition of exponentials of the original vector fields $X_1,\dots,X_m$.
 See \eqref{approdo} for the definition of $\exp_*$.  Lanconelli and the second
 author in  \cite{LM} proved that, if inclusion \eqref{maso}, with pertinent estimates for
 the Jacobian of   $E_x$ are known, then   the Poincar\'e  inequality   follows
 (see \cite{J} for the original proof).
 It is worth to observe now that
all the results in \cite{NSW} and \cite{M}   are   proved for  $C^M
$   vector fields, where $M$ is  much larger than the step $s$.
 This can be seen by carefully reading the proofs of   Lemmas 2.10 and 2.13 in  \cite{NSW}.

In \cite[Section 4]{TW}, Tao and Wright gave a new proof of the ball-box theorem
with a different approach, based  on  Gronwall's inequality.
The authors in \cite{TW} use
 scaling maps  of the form $ \Phi_{x,r}(t): = \exp(t_1
r^{d_1}U_1 + \cdots + t_n r^{d_n} U_n)x$, which are naturally
defined on a box $|t|\le \e_0$, where $\e_0>0$ is a small constant
independent of $x$ and $r$, see the discussion in Subsection
\ref{cinquantadue}.
 The arguments in \cite{TW}
do not rely on the Campbell--Hausdorff formula. \footnote{The methods of \cite{TW} have been further exploited in a very recent paper by Street \cite{Str}.}
Moreover, although the
statement is phrased  for $C^\infty$ vector fields, one can see that their
results hold under the assumption that the vector fields have a   $C^M$   smoothness, with $M=2s$  for vector
fields of step $s$.   See Remark \ref{regolarita} for
a more detailed discussion.

 In \cite{MM} we started
 to work in low regularity hypotheses and we obtained a ball-box theorem and the Poincar\'e inequality
   for
    Lipschitz continuous vector fields of step two with Lipschitz continuous commutators. We used the maps
    \eqref{maso}, but several aspects of the work \cite{MM}
 are peculiar of the step two situation and until now it was not clear how to generalize those results
to  higher step vector fields.

Recently, Bramanti, Brandolini and Pedroni   \cite{BBP}    have proved a
doubling property and the Poincar\'e
 inequality for nonsmooth H\"ormander
 vector fields with
an algebraic method. Informally speaking,
they truncate the Taylor series  of the
coefficients of the vector fields and then they apply to the   polynomial   approximations  the
 results in \cite{NSW,LM} and \cite{M}.
 The paper \cite{BBP} also involves a study of  the almost exponential maps   in \eqref{maso}.
The results in \cite{BBP} and in the present paper were obtained
independently and simultaneously.

In this paper  we complete the result in \cite{MM}, namely we prove a ball-box
theorem for general vector fields of arbitrary
 step $s$, requiring
  basically  that all the commutators involved in the H\"ormander condition are Lipschitz continuous. Our precise
 hypotheses  are stated in Definition  \ref{aesse}.
 We improve all previous results in term of regularity, see Remark
\ref{regolarita}.
As in \cite{MM}, we use  the almost exponential maps \eqref{maso}, but we need to provide a very detailed study
of such functions in the higher step case.

The scheme of the proof of
  our theorem is basically the Nagel, Stein and Wainger's one,
 but there are  some   new tools
that should be emphasized.  Namely, we obtain
   some non commutative calculus formulas  developed in order to  show that, given a commutator $Y$,
   the derivative   $\frac{d}{dt} \exp_*(t Y)   $  can be precisely written   as
   a finite sum of higher order commutators
   plus an \emph{integral remainder}.  This is done in Section
  \ref{jtk}.
  These results are applied in
Section \ref{uno4}   to the almost exponential maps $E$ in \eqref{maso}. Our main structure theorem
 is
 Theorem \ref{distro}.
As in \cite{MM}, part of our computations will be given for smooth
vector fields, namely the standard Euclidean regularization $X_j^\s$
of the vector fields $X_j$. We will keep everywhere under control
all constants in order to
 make sure that they are stable as $\sigma$ goes to $0$.

It is well known (see \cite{LM,MM}) that the doubling property and the Poincar\'e inequality
 follow immediately from  Theorem \ref{distro}.
Observe  also that our
    ball-box theorem     can be useful in all situations where
integrals
of the form    $\int  |f(x)-f(y)|w(x,y)dxdy$ need to be
estimated, for some weight $w$. See for example \cite{M} or
\cite{MoM}.   As an application,   in Proposition \ref{ormai} we prove
a  subelliptic H\"ormander--type estimate for nonsmooth
vector fields. We believe that the results in Section \ref{jtk} may be
  useful in other, related, situations.

Concerning  the machinary developed in Section
\ref{jtk}, it is worth to mention the  papers
 \cite{RS,RS2}, where  non commutative calculus formulas are used in the proof of
a nonsmooth version of Chow's Theorem  for   vector fields of step two.

Geometric analysis for nonsmooth vector fields
 started in the   80s with the papers by Franchi and Lanconelli \cite{FL1,FL2}, who proved the Poincar\'e inequality
 for diagonal vector fields in $\R^n$ of the form
 $X_j = \lambda_j(x)\p_j$, $j=1, \dots, n$.
In the  diagonal case completely different techniques are available.  In the recent paper by Sawyer and Wheeden
\cite{SW}, which probably contains the best results to date on
diagonal vector fields,  the  reader can  find a rich bibliography
on the subject.

 \medskip
 \noindent{\bf Plan of the paper.}
  In Section \ref{last}  we introduce
   notation. In Section
   \ref{jtk}
    we prove our noncommutative calculus formulas and in Section \ref{jkt}
        we prove
a stability property of the
  ``almost-maximality''  condition \eqref{maxx}. These tools  are applied in Subsection
  \ref{unotre} to the maps $E$.  In subsection \ref{cinquantadue} we briefly discuss the ``scaled version''
  of our maps $E$.
   Subsection \ref{principale}
contains the ball-box theorem. In Section \ref{esempi} we show some
examples. Finally, Section  \ref{appendiamo} contains the  smooth
approximation result for the original vector fields.

 \medskip
 \noindent{\bf Acknowledgment.
 }
   We wish to express our gratitude to Ermanno
   Lanconelli, for his continuous advice, encouragement and interest in our work, past
   and
   present. We dedicate
this paper to him with admiration.

\section{Preliminaries and notation} \label{last}
\setcounter{equation}{0} We consider    vector fields
 $
 X_1, \dots, X_m
$ in $\R^n$.    For any $\ell\in\N $ we define a \emph{word} $w=j_1\dots
j_\ell$  to be any   finite ordered
collection of $\ell$ letters,   $j_k\in\{1,\dots, m\}$, and we introduce the   notation
$ X_w  = [X_{j_1},\cdots ,[X_{j_{\ell-1}},X_{j_\ell}]]$ for
commutators.
   Let  $ |w|  := \ell$ be the \emph{length} of $X_w$.
We assume the H\"ormander condition of step $s$, i.e. that
$\{X_w(x): |w |\le s\}$  generate all $\R^n$ at any point
$x\in\R^n$. Sometimes  it  will be useful to have a different
notation  between a vector field in $\R^n$
 and its  associated vector function. In these situations we will write  $X_w= f_w\cdot\nabla =\sum_{k=1}^n
 f_w^k\p_k$.
We will also enumerate as $Y_1, \dots, Y_q$ all the commutators
$X_w$ with length $|w|\le s$ and denote by $\ell_i$ or $\ell(Y_i)$
the length of $Y_i$ . We identify an ordered $n$--tuple of
commutators $Y_{i_1}, \dots, Y_{i_n}$ by the   index
$I=(i_1, \dots, i_n)\in  \ii:=\{1,\dots,q\}^n$.

For $x,y\in\R^n$,  denote by  $d(x,y) $ the control distance, that is  the
infimum of the $r>0$
such that there is  a Lipschitz path
 $\gamma:[0,1]\to \R^n$ with $\gamma(0)=x$, $\gamma(1)=y$ and $\dot\gamma=\sum_{j=1}^m b_j X_j(\gamma)$,
  for a.e. $t\in[0,1]$. The measurable functions $b_j $  must satisfy  $|b_j(t)|\le r$ for almost any $t$.
 Corresponding balls will be indicated as $B(x,r)$.

Denote also  by $\varrho(x,y)$ the infimum of the $r>0$  such
that there is a Lipschitz continuous path $\gamma:[0,1]\to \R^n$
with $\gamma(0)=x$, $\gamma(1)=y$ and $\gamma $ satisfies for a.e.
$t\in[0,1]$, $\dot\gamma=\sum_{i=1}^q c_i Y_i(\gamma)$ for suitable
measurable functions $c_j $ with $|c_j(t)|\le r^{\ell(Y_j)}$.
Corresponding balls will be  denoted by $B_\r(x,r)$.
The definition of $\r$ is meaningful as soon as the vector fields
$Y_j$ are at least continuous.

\begin{definition}[Vector fields of class $\mathcal{A}_s$]\label{aesse}
 Let  $X_1,\dots,X_m $ be  vector fields in $\R^n$ and
 let $s\ge 2$. We say that the vector fields $X_j$ are of class $\A_s$ if
 they are of class $C^{s-2,1}_{\loc}(\R^n)$ and  for any word
   $w $ with $|w|=s-1$, and for every $j,k\in\{1,\dots,m\}$,

  (1) the derivative $X_k f_w$ exists and it is continuous;

(2) the  distributional derivative $X_j (X_k f_w)$ exists
and \begin{equation}
     \label{dhc}
X_j (X_k f_w)\in    L^\infty_{\loc}(\R^n).
    \end{equation}
\end{definition}

Recall that $X_j\in C^{s-2,1}_{\loc}$ means that  all the Euclidean derivatives of order at most $s-2$ of the functions
$f_1,\dots, f_m$ are locally Lipschitz   continuous. In particular,  all the
commutators  $X_w$, with $|w| \le  s-1$ are locally Lipschitz continuous in the
Euclidean sense and by
item (1)  all commutators
   $X_w$  of length $|w|= s$ are pointwise defined. If we knew
that $d$ defines the Euclidean topology, condition
  $(2)$ would equivalent to the fact that $X_w$
 is locally $d$-Lipschitz, if $|w|=s$, see   \cite{GN,FSSC}.

Let $\{X_1,\dots,X_m\}$ be in the class $\A_s$ and assume that they satisfy the H\"ormander condition of step $s$.  Fix once for all a pair of  bounded
  connected open sets $\Omega'\subset\subset \Omega$ and denote
   $K=\overline{\Omega'}$.
We denote by  $D$  Euclidean derivatives. If $D=
\p_{j_1}\cdots\p_{j_p} $ for some $j_1, \dots, j_p\in\{1,\dots,n\}$,
then $|D|:=p$ indicates the order of $D$.
 It is understood that a derivative of order $0$ is the identity.
 Introduce the positive
  constant
\begin{equation}
\begin{aligned} \label{lipo}
 L: & =  \max_{\substack{ 1\le j\le m \\ 0\le \abs{D}\le s-2}}
\sup_\Omega  |Df_j|
 +\max_{\substack{j=1,\dots, m\\
|D|=s-1}}\esssup_\Omega \abs{D f_j}
\\&\qquad +
 \max_{\substack{
   k,j=1,\dots,m,\\   |w|= s-1}
} \esssup_\Omega |X_kX_j f_w|.
\end{aligned}\end{equation}

\begin{remark}
We will prove in Section 5 a ball-box theorem for vector fields of
step $s$ in the class $\A_s$. This  improves both the results in
\cite{TW} and \cite{BBP} in term of regularity. Indeed, in \cite{TW}
a
 $C^M$ regularity, with $M=2s$ must be assumed (see Remark \ref{regolarita}). In \cite{BBP} the authors assume that
 the vector fields belong to the Euclidean Lipschitz space
 $ C^{s-1,1}_{\loc}(\R^n)$,
  which requires the boundedness on the Euclidean gradient $\nabla f_w$ of any commutator $f_w$ of length $s$, while
  we only need to  control only the ``horizontal" gradient of $f_w$.

\end{remark}

\subsection*{Approximate commutators} For vector fields $X_{j_1}, \dots, X_{j_\ell}$,
and for $\t>0,$ we define, as in \cite{NSW}, \cite{M} and \cite{MM},
\[
 \begin{aligned}
 C_\t( X_{j_1})& := \exp(\t X_{j_1}),
 \\ C_\t( X_{j_1}, X_{j_2})& :=\exp(-\t X_{j_2})\exp(-\t X_{j_1})\exp(\t X_{j_2})\exp(\t X_{j_1}),
 \\&\vdots
  \\C_\t( X_{j_1}, \dots, X_{j_\ell})&
:=C_\t( X_{j_2}, \dots, X_{j_\ell})^{-1}\exp(-\t X_{j_1}) C_\t( X_{j_2}, \dots, X_{j_\ell})\exp(\t X_{j_1}). \end{aligned}
 \]
Then let
\begin{equation}
e_*^{tX_{j_1 j_2\dots j_\ell}} :=  \exp_*(t X_{{j_1 j_2\dots j_\ell}}):= \left\{\begin{aligned}
& C_{t^{1/\ell}}(X_{j_1}, \dots, X_{j_\ell} ), \quad &\text{ if $t>0$,}
\\
&C_{|t|^{1/\ell}}(X_{j_1}, \dots, X_{j_\ell} )^{-1}, \quad &\text{ if $t<0$.}
                 \end{aligned}\right.
\label{approdo}
\end{equation}
By standard ODE theory,    there is $t_0$  depending on $\ell, K$,
$\Omega$,  $ \sup\abs{ f_j } $ and  $\esssup\abs{\nabla f_j}  $     such that
$\exp_*(t X_{{j_1 j_2\dots j_\ell}})x$ is well defined for any $x\in
 K$ and $|t|\le t_0$.
The approximate commutators $C_t$  are quite natural (indeed, they
make an appearance in the original paper \cite{NSW}). Assuming that the
vector fields are smooth and using
 the Campbell--Hausdorff formula,
 we have the formal
expansion
 \begin{equation*}
  C_\t(X_{j_1}, \dots, X_{j_\ell}) = \exp\Big( 
 \t^\ell X_{j_1j_2\dots j_\ell} +\sum_{k=\ell+1}^\infty \t^k R_k\Big),
 \end{equation*}
 where $R_k$ denotes a linear combination of commutators of length $k$. See \cite[Lemma 2.21]{NSW}.
 A study of these maps in the smooth case based on this formula is carried out
in \cite{M}.

   Define, given $\I=(i_1,\dots,i_n)\in\ii$,   $x\in K$ and $h\in\R^n$, with   $|h|\le C^{-1}$
     \begin{equation}
 \label{hhh}E_{I,x}(h):=E_\I(x,h):=\exp_*(h_1 Y_{i_1})\cdots \exp_*(h_n Y_{i_n})(x),
   \end{equation}
 \begin{equation*}
\|h\|_\I : =\max_{j=1,\dots,n}|h_j|^{1/\ell_{i_j} },\qquad   Q_\I(r): =\{h\in\R^n:\|h\|_\I < r\}
\end{equation*}
 \begin{equation*}
   \Lambda(x,r): =\displaystyle{\max_{{K}\in \ii} }\abs{\lambda_{K}(x)} r^{\ell({K})},
 \end{equation*}
 where $\ell(\K)=\ell_{k_1}+\cdots+\ell_{k_n}$,  the determinants $ \lambda_{K}$
are defined in \eqref{massi}, and we have
\begin{equation}\label{horma}
 \nu :=\inf_{x\in \Omega}\Lambda(x,1)>0.
\end{equation}
The lower bound \eqref{horma} will appear many times in the following sections.
All the constants in our main   theorem will depend on $\nu$ in
\eqref{horma} and on $L$ in \eqref{lipo}.

In order to refer to the crucial condition  \eqref{maxx}, we give the following definition
\begin{definition}[$\eta-$maximal triple]  \label{lader} Let $\eta\in \left]0,1\right[$,    $\I\in\ii$, $x\in\R^n$ and $r>0$. We say that $(\I,x,r)$ is $ \eta-$maximal,   if  we have
$  |\lambda_\I(x)| r^{\ell(\I)}>\eta \Lambda(x,r). $
\end{definition}

   \subsection*{Regularized vector fields.} Here we describe our procedure of smoothing of the vector fields $X_j$ of
   step $s$.
For  for any function $f$, let
  $f^{(\sigma)}(x)=\int
 f(x-\sigma y)\phi(y)dy$, where  $  \phi\in C_0^{\infty} $ is a  standard nonnegative
 averaging kernel supported in the unit ball.
 Define
 \begin{equation}
  \label{civetta}\begin{aligned}
                  X_j^\s & : = \sum_{k=1}^n(f_j^k)^{(\s)}\p_k
                \quad\text{and}\quad
\\  X_{j_1\dots j_\ell}^\sigma
&: =   [X_{j_1}^\s,\cdots,
[X_{j_{\ell-1}}^\s,X_{j_\ell}^\s]]= : \sum_{k=1}^n (f_{j_1\dots j_\ell}^k)^\s\p_k,
                 \end{aligned}
 \end{equation}
 for any word $j_1\dots j_\ell, $ with  $2\le \ell\le s$.     (Observe that $f_w^\s\neq  f_w^{(\s)}$, if $|w|>1$.
See Section \ref{appendiamo})
 Then:
 \begin{proposition}\label{schiaccia2}
 Let $X_1, \dots, X_m$ be vector fields in the class $\A_s$. Then the following hold. \begin{enumerate}
  \item For any $\ell=1,\dots, s$, for any word  $w$  of lenght $|w|\le \ell$,
  \begin{equation}\label{appo}
X_w^\sigma \to
    X_w,
  \end{equation}
as $\sigma\to 0$,   uniformly on $K$.
In particular,  for any  multi-index $I=(i_1, \dots, i_n)\in\ii$, we have
$
 \lambda_\I^{\sigma} :=\det(Y_{i_1}^\sigma , \dots, Y_{i_n}^\sigma)\longrightarrow \lambda_\I,
$
uniformly on $K$, as $\sigma\to 0$.

 \item There is $\s_0>0$ such that, if $|w|= s$ and $k=1,\dots, m$,
 then
 \begin{equation}\label{diffo}
\sup_{0<\s<\s_0} \sup_{x\in K} | X_k^\s f_w^\s|\le C,
 \end{equation}
 with  $C$ depending on $L$ in \eqref{lipo}.

 \item There is $r_0$ depending on $K,\Omega$ and the constant
  in \eqref{lipo} such that the following holds. Let $x\in K$, $r<r_0$ and  $b\in L^\infty([0,1], \R^m)$ with $
  \|b_j\|_{L^\infty}\le r$ for all $j$. Then there is a unique  $\phi\in\mathrm{Lip}([0,1],\R^n)$,  a.e. solution of
  $\dot\phi =
   \sum_j b_j X_j(\phi)$, with $\phi(0)=x$. Denote also by  $\phi^\s\in\mathrm{Lip}([0,1],\R^n)$, the  a.e. solution
   of the
  $
    \dot \phi^\s=\sum_{j} b_j X_j^\s(\phi^\s)$,   with $\phi^\s(0)=x$. Then
 \begin{equation}
 \phi^\s(1) \to \phi(1),
  \end{equation}
  as $\s\to 0$, uniformly in $x\in K$.
 As a consequence, for any $\I\in\ii$, uniformly in  $x\in K$, $|h|\le C^{-1}$,
 \begin{equation}\label{appone}
E_\I^\s(x,h):=\exp_*(h_1 Y_{i_1}^\s)\cdots \exp_*(h_n Y_{i_n}^\s)\to E_\I(x,h).
\end{equation}
 \end{enumerate}
 \end{proposition}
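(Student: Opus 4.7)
The plan is to treat the three claims in order, since each relies on the previous one.

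For (1), I would proceed by induction on $\ell=|w|$. The base case $\ell=1$ is immediate: the coefficients $f_j^k$ are continuous on a neighborhood of $K$, so standard mollification theory gives $(f_j^k)^{(\sigma)}\to f_j^k$ uniformly on $K$, hence $X_j^\sigma\to X_j$. For the inductive step, write $X_w^\sigma=[X_{j_1}^\sigma,X_{w''}^\sigma]$ with $|w''|=\ell-1$; the coefficients $f_w^{\sigma,k}$ of this commutator are explicit bilinear combinations of the coefficients of $X_{j_1}^\sigma$ and $X_{w''}^\sigma$ and their first Euclidean derivatives. Expanding recursively, $f_w^{\sigma,k}$ is a fixed polynomial expression in the functions $(f_j^l)^{(\sigma)}$ and their Euclidean derivatives up to order $\ell-1\le s-1$. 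For $\ell\le s-1$, each such derivative is the convolution of a continuous function (since $f_j^l\in C^{s-2,1}_{\loc}$) with the mollifier, hence converges uniformly on $K$, and the limit polynomial is exactly $f_w^k$. For the top case $\ell=s$, the relevant derivatives have order up to $s-1$, and condition (1) of Definition \ref{aesse} guarantees that the combinations appearing in $f_w^k$ (which involve specifically $X_{j_1}f_{w''}^k$) are continuous, so uniform convergence again propagates.

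For (2), the difficulty is that a naive expansion of $X_k^\sigma f_w^\sigma$ with $|w|=s$ produces partial derivatives of order $s$ of the coefficients $f_j^l$, which are not in $L^\infty$ uniformly in $\sigma$ under the bare hypothesis $f_j^l\in C^{s-2,1}_{\loc}$. Writing $w=j_1w''$ with $|w''|=s-1$ and using $f_w^{\sigma,k}=X_{j_1}^\sigma f_{w''}^{\sigma,k}-X_{w''}^\sigma f_{j_1}^{\sigma,k}$, the quantity $X_k^\sigma f_w^{\sigma,k}$ splits into a sum of second-order differential expressions acting on mollified coefficients. The plan is to reorganize these expressions so that every term that genuinely involves an order-$s$ Euclidean derivative of a $f_j^l$ is collected into a combination of the form $X_i^\sigma X_j^\sigma f_{w'}^\sigma$ with $|w'|=s-1$; by condition (2) of Definition \ref{aesse} this is, modulo an error depending only on lower-order derivatives of the $f_j^l$, a mollified version of the $L^\infty$ quantity $X_iX_jf_{w'}$ and is therefore bounded uniformly in $\sigma$. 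All other terms in the expansion are polynomials in mollified coefficients and their derivatives of order at most $s-1$, hence bounded by $L$ in \eqref{lipo}. This combinatorial accounting is the main obstacle, since one must verify that no uncontrolled top-order derivative survives outside the designated $X_iX_jf_{w'}$ pattern.

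For (3), existence and uniqueness of $\phi\in\mathrm{Lip}([0,1],\R^n)$ follow from the Cauchy--Lipschitz theorem applied to the time-dependent Lipschitz vector field $(t,x)\mapsto\sum_{j=1}^m b_j(t)X_j(x)$, once $r_0$ is chosen so small that any trajectory starting at $x\in K$ stays in $\Omega$ for $t\in[0,1]$, which forces the Lipschitz constant of the driving field to be controlled by $L$. The analogous statement for $\phi^\sigma$ is automatic since $X_j^\sigma$ is smooth, with Lipschitz constants uniformly bounded in $\sigma$ by the same $L$. Applying Gronwall to $|\phi^\sigma(t)-\phi(t)|$ and using the uniform convergence $X_j^\sigma\to X_j$ on $\overline\Omega$ provided by part (1) yields $\phi^\sigma\to\phi$ uniformly on $[0,1]$, uniformly in $x\in K$ and in the driving coefficients. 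Finally, each factor $\exp_*(h_jY_{i_j}^\sigma)$ in the definition \eqref{hhh} of $E_I^\sigma$ is, by \eqref{approdo}, a finite composition of flows of the original $X_k^\sigma$'s over time intervals of length $|h_j|^{1/\ell_{i_j}}$; the uniform convergence just established for each such flow is preserved under finite composition, giving $E_I^\sigma(x,h)\to E_I(x,h)$ uniformly on the stated domain.
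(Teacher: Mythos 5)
Your plan for item 3 is correct and matches what the paper does (it just cites standard ODE theory). For items 1 and 2, however, the proposal identifies the right high-level strategy but has a genuine gap at the crucial technical step, which the paper fills with the key quantitative Lemma~\ref{mp4}: for $|w|\le s$ and $|D|\le s-|w|$, $\sup_K|Df_w^\sigma-(Df_w)^{(\sigma)}|\le C\sigma$.

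For item 1 at top order $\ell=s$, your argument as stated does not close. You expand $f_w^\sigma$ as a fixed polynomial in the quantities $(\partial^\beta f_j^l)^{(\sigma)}$ with $|\beta|\le s-1$. Since $\partial^\beta(f_j^l)^{(\sigma)}=(\partial^\beta f_j^l)^{(\sigma)}$ and $\partial^\beta f_j^l$ for $|\beta|=s-1$ is only $L^\infty_{\loc}$ (not continuous) under the $C^{s-2,1}_{\loc}$ hypothesis, the individual mollified factors do \emph{not} converge uniformly on $K$ in general; think of $\mathrm{sgn}^{(\sigma)}\not\to\mathrm{sgn}$ uniformly near the origin. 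What saves the day is not that each factor converges, but that (a) the specific algebraic combination $f_w$ is continuous (this is precisely condition (1) of Definition~\ref{aesse}, as you note), and (b) the mollified polynomial $f_w^\sigma$ differs from the mollification $(f_w)^{(\sigma)}$ of the polynomial by $O(\sigma)$ in $L^\infty$. Point (b) is a nontrivial commutator estimate, since mollification does not commute with products; it is exactly the content of Lemma~\ref{mp4} in the paper (with $p=0$), proved by a careful induction whose inductive hypothesis must be strengthened to include derivatives $Df_w^\sigma$, not just $f_w^\sigma$, because the recursion $f_v^\sigma=X_k^\sigma f_w^\sigma-X_w^\sigma f_k^\sigma$ calls on $\partial_\alpha f_w^\sigma$. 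Your induction on $|w|$ with a hypothesis that only controls $f_w^\sigma$ itself is therefore too weak, and the phrase ``uniform convergence again propagates'' hides exactly the place where the real work happens.

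For item 2, you correctly sense that the dangerous contributions are those pairing a Lipschitz coefficient with an order-$(s-1)$ derivative, and that they should be matched against condition (2) of $\A_s$. But your plan explicitly defers the ``combinatorial accounting,'' and you also gloss over the fact that $X_i^\sigma X_j^\sigma f_{w'}^\sigma$ is not literally a mollification of $X_iX_jf_{w'}$: the two differ by commutator terms that themselves carry a factor $1/\sigma$ from the derivative of the mollifier. The paper resolves this by splitting $X_j^\sigma X_k^\sigma f_w^\sigma=M+N$ with $M=X_j^\sigma(X_kf_w)^{(\sigma)}$ and $N=X_j^\sigma(X_k^\sigma f_w^\sigma-(X_kf_w)^{(\sigma)})$, then handling $M$ via the distributional integration-by-parts formulation \eqref{chillo} of condition (2), and handling $N$ by exhibiting the $O(\sigma)$ cancellation against the $1/\sigma$ singularity using Lipschitz continuity of the low-order coefficients. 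You should either reproduce this split or supply the commutator estimate explicitly; as written, the argument remains a sketch at the point where the hypothesis $\A_s$ is actually used.
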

\begin{proof}  The proofs of items 1 and 2 are given in details in Section \ref{appendiamo}.    Item 3 follows from standard
    properties of ODE. \end{proof}

\begin{remark}
The approximation result contained in Proposition \ref{schiaccia2} is crucial for our subsequent arguments.
Note that   the  class $\A_s$ requires a control on the Euclidean gradients of
 all commutators of length strictly less than $s$.
However, it is natural to conjecture that a control only along the
horizontal directions could be sufficient to ensure   our main
structure theorem in Section \ref{uno4}. Unfortunately, it seems
quite difficult to get an approximation theorem as Proposition
\ref{schiaccia2} for a more general class than $\A_s$. On the other
side,
 working without mollified vector fields seems to rise
 some non trivial  new issues which we plan to face in a further study. \end{remark}

 \subsection*{Some more notation.} Our notation for constants are the following:
     $C, C_0  $ denote large absolute constants, $\e_0,r_0, t_0,
C^{-1}$
      or $C_0^{-1}$ denote  positive small  absolute
      constants. ``Absolute
       constants'' may depend on the dimension $n$, the number $m$ of the fields,  their step $s$, the
     constant $L$ in \eqref{lipo} and possibly the constant $\nu$  in  \eqref{horma}.
  We also use the notation $\e_\eta$ (or $C_\eta$)
   to denote a small (or a large) constant depending also on $\eta$.
The constants $\s_0$ or $\wt\sigma$ appearing in the regularizing
parameter $\sigma$ may also depend on the Euclidean continuity
moduli of the vector fields $f_w$, with $|w|=s,$ which are not
included in $L.$ Composition of functions are shortened as follows:
$fg$ stands for $f\circ g$.
 The notation  $u$  is always used for   functions of  the form $\exp(t_1
Z_1)\cdots\exp(t_\nu Z_\nu)$ for some $t_j\in\R$, $ \nu \ge 1$, $
Z_j\in \{ X_1,\dots, X_m\}$.

\section{Approximate exponentials of commutators}\label{jtk}
\setcounter{equation}{0}
The main result of this section is Theorem \ref{dididi} in Subsection \ref{tazzurella},
 where we prove an exact formula for the derivative
$  \frac{d}{dt}u(e_*^{tX_w}(x))$, where $X_w$ is a
commutator of length $|w|\le s$, while $e_*$ is the approximate exponential defined in \eqref{approdo}.
All this section is written for smooth vector fields, namely  the  mollified $X_j^\s$, but all constants are
appearing
 in our computations are stable as $\s$ goes to
 $ 0$. We drop everywhere in this section  the superscript $\s$.

 We will  show that
 \begin{equation} \label{integ} \begin{aligned}
  \frac{d}{dt} e_*^{tX_w}(x) = X_w (e_*^{tX_w}(x) )&+\text{ higher order commutators }
 \\& + \text{ integral remainder. }
 \end{aligned}\end{equation}
 The integral remainder is rather  complicated, but we do not need its exact form.
 In order to
  understand
  what we need to compute the derivative in \eqref{integ}, let us try to   calculate for example
 the derivative  $\frac{d}{dt} u(e^{tX} e^{tY} x)$, where $X, Y\in \{ \pm X_1, \dots, \pm X_m\}$ and   $u$ denotes  the identity
  function in $\R^n$. Since $X$ and $Y$ are $C^1$, we have
\begin{equation*}
 \frac{d}{dt}  u(e^{tX} e^{tY} x) = (Xu) (e^{tX} e^{tY} x) + Y  (ue^{tX} )(e^{tY} x).
\end{equation*}
In order to compare  the terms in the right-hand side, we may write
\begin{equation*}
 Y  (ue^{tX} )(e^{tY} x) = Y  u(e^{tX} e^{tY} x) + \int_0^t\frac{d}{d\t}  Y  (ue^{\t X})(e^{-\t X} e^{tX} e^{tY} x)
 d\t.
\end{equation*}
  Lemma \ref{seminuovo}
 below shows that the derivative inside the integral can be written in an exact form in term of the commutator of $X$ and $Y$.
The purpose of the following Subsection \ref{reste}
is to establish a formalism  to study in a precise way more general, related, integral expressions.

\begin{lemma}\label{seminuovo} Let $Z,X$ be smooth vector fields. Then,
\begin{equation}\label{lachi}
 \frac{d}{d
 t}Z (u e^{-t X})(e^{t X}y)=[X,Z](u e^{-t X})(e^{t X}y).
\end{equation}
\end{lemma}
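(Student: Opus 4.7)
The plan is to reduce the identity to an application of the chain rule together with the standard fact that a vector field commutes with the pull-back by its own flow. Let $\phi_t:=e^{tX}$ and write $u_t:=u\circ\phi_{-t}$, so that the claim reads
$$
\frac{d}{dt}(Zu_t)(\phi_t y)=[X,Z]u_t(\phi_t y).
$$
I would view $(Zu_t)(\phi_t y)$ as $F(t,\phi_t y)$ with $F(t,z):=(Zu_t)(z)$ and differentiate by the chain rule, using $\dot\phi_t y = X(\phi_t y)$. The spatial contribution is $X(Zu_t)(\phi_t y)$, and the purely $t$-derivative contribution is $Z(\partial_t u_t)(\phi_t y)$, since $Z$ is a first-order differential operator acting only on the $z$-variable.

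The first computation gives $\partial_t u_t = \partial_t(u\circ\phi_{-t})=-(\nabla u)(\phi_{-t})\cdot X(\phi_{-t})=-(Xu)\circ\phi_{-t}$. The second, decisive, step is to rewrite this as $-Xu_t$, i.e.\ to push the operator $X$ through the pull-back by $\phi_{-t}$. This is the identity $X(u\circ\phi_{-t})=(Xu)\circ\phi_{-t}$, which holds because $\phi_t$ is the flow of $X$: both $D\phi_t(z)X(z)$ and $X(\phi_t z)$ solve the same linear ODE $\dot v=DX(\phi_t z)\,v$ with the same initial value $X(z)$, hence agree for all $t$. Applied componentwise (recall that $u$ is a composition of exponentials and thus $\R^n$-valued, but the argument is scalar in each coordinate) this yields $\partial_t u_t=-Xu_t$, and therefore $Z(\partial_t u_t)(\phi_t y)=-(ZXu_t)(\phi_t y)$.

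Combining the two contributions gives $\frac{d}{dt}(Zu_t)(\phi_t y)=(XZ-ZX)u_t(\phi_t y)=[X,Z]u_t(\phi_t y)$, which is the desired formula. There is no real obstacle: the vector fields $X,Z$ are smooth (we are working, as declared at the start of the section, with the mollified fields $X_j^\s$), so all the derivatives above are classical and may be freely interchanged. The only point requiring care is distinguishing the operator identity $\partial_t u_t=-Xu_t$ from the purely pointwise $\partial_t u_t=-(Xu)\circ\phi_{-t}$, which is precisely where the flow-commutation identity is invoked.
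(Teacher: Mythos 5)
Your proof is correct and follows essentially the same route as the paper's: both split $\frac{d}{dt}$ into the spatial contribution $XZ(ue^{-tX})(e^{tX}y)$ and the $t$-dependence of the pulled-back function, interchange $Z$ with $\partial_t$, and reduce everything to the identity $\partial_t(u\circ e^{-tX})=-X(u\circ e^{-tX})$. The only (immaterial) difference is how that identity is obtained: you combine the pointwise computation $\partial_t(u\circ\phi_{-t})=-(Xu)\circ\phi_{-t}$ with the flow-invariance $D\phi_{-t}\,X=X\circ\phi_{-t}$ proved via the variational equation, whereas the paper gets it in one stroke by differentiating $u(\eta)=u(e^{-tX}e^{tX}\eta)$.
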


\begin{proof} The lemma is known but we provide a proof for completeness.
 Observe first that
\[
 \begin{aligned}
\frac{d}{dt}Z(ue^{-tX})(e^{tX}x)&=  \frac{d}{d\t} Z(ue^{-tX})(e^{\t X}x) \Big|_{\t=t}+
\frac{d}{d\t} Z(ue^{-\t X})(e^{t X}x) \Big|_{\t=t}
\\&=:(1)+(2).
 \end{aligned}
\]
Obviously, $(1)=XZ  (ue^{-tX})(e^{t X}x)$. Write now $(2)$ as follows
\[
 \begin{aligned}
 \frac{d}{dt} Z(ue^{-tX})(\xi)\big|_{\xi=e^{tX}x}&= \frac{d}{dt} Z^j
 (\xi)\p_{\xi_j}(ue^{-tX})(\xi)\big|_{\xi=e^{tX}x}
\\&= Z^j (\xi)\p_{\xi_j}\frac{d}{dt}(ue^{-tX})(\xi)\big|_{\xi=e^{tX}x} \end{aligned}
\]
The proof of formula \eqref{lachi} will be concluded as soon as we prove that
\begin{equation}\label{caspo}
\frac{d}{dt}(ue^{-tX})(\xi) = -X(ue^{-tX})(\xi).
\end{equation}
To prove \eqref{caspo},  start from
the identity $u(\eta)= u(e^{-tX}e^{tX}\eta)$, for small $t$. Differentiating,
\[
 \begin{aligned}
  0&=\frac{d}{dt} (u(e^{-tX}e^{tX}\eta)) =  \frac{d}{d\t} (u(e^{-tX}e^{\t X}\eta))|_{\t=t}+ \frac{d}{d\t} (u(e^{-\t
  X}e^{tX}\eta))|_{\t=t}
  \\&= Z (ue^{-tX})(e^{tX}\eta)+ \frac{d}{d\t} (u(e^{-\t
  X}e^{tX}\eta))|_{\t=t}.
 \end{aligned}
\]
Then, \eqref{caspo} is proved by letting $e^{tX}\eta=\xi$.
\end{proof}

\subsection{Notation for integral remainders} \label{reste}  Let $\lambda\in\N$, $p\in\{2,\dots,s+1\}$. We denote, for  $y\in K$,  and    $t\in[0,t_0]$, $t_0$ small enough,
\begin{equation}\label{nota}
 O_{p }(t^\lambda,u,y)= \sum_{i=1}^N\int_0^t
 \o_i(t,\t )  X_{w_i}(u\phi_i^{-1} e^{-\t Z_{i}})( e^{\t Z_{i}}\phi_i y)d\t,
\end{equation}
where $N$ is a suitable integer and  $u$ is the identity map or
$u=\exp(t Y_1)\cdots \exp(t Y_\mu),$ for some  integer $\mu$ and suitable vector
 fields $Y_j\in \{ \pm X_1 ,\dots,\pm X_m\}$.
Here $X_{w_i}$ actually stands for a mollified $X_{w_i}^\s$, but we
drop the superscript for simplicity. To describe the generic  term
of the sum  above, we drop the dependence on $i$:
\begin{equation}\label{tipicissimo}
(R):= \int_0^t
 \o(t,\t)   X_w(u\phi^{-1}e^{-\t  X})( e^{\t X}\phi y)d\t .
\end{equation}
  Here $X_w$ is a commutator of  length   $|w|=p $ and  $X\in \{ \pm X_j\}$. Moreover, for
 any $t<t_0$,
 the function  $\o(t,\t)$ is a
 polynomial, homogeneous  of degree $\lambda -1$ in  all variables $(t,\t)$, such that $\o(t,\t ) > 0$ if
 $0<\t<t.$  Thus
\begin{equation}
 \label{opp}
 \int_0^t \omega (t,\t ) d\t =  b t^\lambda\quad \text{ for any }  \;t>0,
\end{equation}
for a suitable constant $b\in\R$.
  The map $\phi$  is the identity map or
$\phi= \exp(t  Z_1)\cdots \exp(t  Z_\nu)$
for some $\nu\in\N$, where    $Z_j\in\{ \pm X_1,\dots,\pm X_m\}$.

\begin{remark}\label{costola} All the numbers $N, \mu,\nu,  b, $ appearing in the computations of this paper will be bounded by  absolute constants.
  \end{remark}

In order to explain how this  formalism works, we give the main properties of our   integral remainders.

\begin{proposition} \label{ossera}    A remainder of the form \eqref{nota} satisfies for every $\alpha\in\N$
\begin{equation}\label{laprop}
 t^\a O_p{(t^\lambda,u,y})= O_p({t^{\a+\lambda},u,y}) \quad \text{ for all } y\in K\quad  t\in[0, t_0].
\end{equation}
Moreover, for $p\le s+1$,
\begin{equation}\label{prie}
| O_{p }(t^\lambda,u,y)|  \le C t^\lambda   \qquad \text{ for all }y\in K\quad t \in[0, t_0],
\end{equation} where $t_0$ and  $C$ depend on the constant $L$ in \eqref{lipo} and on the numbers
 $N,\mu, \nu,  b$ appearing in the sum \eqref{nota}.
 Furthermore, if $\ell(Z)=1$ and  $p\le s+1$,
\begin{equation}\begin{aligned}
            O_p(t^\lambda, ue^{tZ}, y)= O_p(t^\lambda, u, e^{tZ} y).
                \end{aligned}
\label{note2}
\end{equation}
Finally, if $p\le s$, we may write, for suitable constants   $c_w$, $|w|=p,$
\begin{equation}\label{iii} \begin{aligned}
 O_{p }(t^\lambda,u,y)
  &= \sum_{|w|=p}c_w t^{\lambda}X_w u(y) + O_{p +1}(t^{\lambda+1} ,u,y) .
\end{aligned}
\end{equation}
 \end{proposition}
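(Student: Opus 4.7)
The plan is to handle the four statements in order of increasing depth: properties (1), (2) and (3) are formal or analytic bookkeeping around the definition \eqref{nota}, while property (4) is the substance and uses Lemma \ref{seminuovo} as the core tool, applied first in the variable $\tau$ and then iteratively to strip off the composition $\phi$.

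For (1), I would absorb $t^\a$ into each weight: the product $t^\a\omega_i(t,\tau)$ is still a polynomial in $(t,\tau)$, positive on $0<\tau<t$, and now homogeneous of degree $\a+\lambda-1$; its integral on $[0,t]$ equals $b_i t^{\a+\lambda}$. Since all the rest of the data ($X_{w_i}$, $u$, $\phi_i$, $Z_i$, $N$, $\mu$, $\nu$) is untouched, we land back in \eqref{nota} with parameter $\a+\lambda$. For (2), bound a generic integrand by $|\omega(t,\tau)|$ times the $L^\infty$ norm on a neighborhood of $K$ of $X_w$ composed with the Jacobians of the flows $u,\phi^{-1},e^{-\tau X}$. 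The commutator $X_w$ has length at most $s+1$; for $|w|\le s$ its coefficients are bounded by $L$ directly, while for $|w|=s+1$ one writes $X_w=[X_j,X_{w'}]$ with $|w'|=s$ and expands, invoking part (2) of Definition \ref{aesse} together with item (2) of Proposition \ref{schiaccia2} to control $X_j f_{w'}$ uniformly in $\s$. The flow Jacobians are uniformly bounded for parameters in $[0,t_0]$, so the integrand is pointwise $\le C|\omega(t,\tau)|$, and integration yields $C|b|t^\lambda$. For (3), insert $\mathrm{id}=e^{-tZ}e^{tZ}$: $\phi_i y=(\phi_i e^{-tZ})(e^{tZ}y)$. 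Setting $\tilde\phi_i:=\phi_i e^{-tZ}$ we stay in the allowed class of compositions because $\ell(Z)=1$ means $-Z\in\{\pm X_1,\dots,\pm X_m\}$; using $\tilde\phi_i^{-1}=e^{tZ}\phi_i^{-1}$, every summand of $O_p(t^\lambda,ue^{tZ},y)$ rewrites verbatim as a summand of $O_p(t^\lambda,u,e^{tZ}y)$ with $\tilde\phi_i$ in place of $\phi_i$.

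For (4), fix a generic summand with $|w|=p\le s$ and set $G(\tau):=X_w(u\phi^{-1}e^{-\tau X})(e^{\tau X}\phi y)$. Lemma \ref{seminuovo} applied with $Z:=X_w$ gives $G'(\tau)=[X,X_w](u\phi^{-1}e^{-\tau X})(e^{\tau X}\phi y)$, a commutator of length exactly $p+1$. Writing $G(\tau)=G(0)+\int_0^\tau G'(s)\,ds$, multiplying by $\omega(t,\tau)$, integrating in $\tau$ and swapping order of integration by Fubini yields
\[
\int_0^t\omega(t,\tau)G(\tau)\,d\tau= b t^\lambda X_w(u\phi^{-1})(\phi y)+\int_0^t\tilde\omega(t,s)G'(s)\,ds,
\]
where $\tilde\omega(t,s):=\int_s^t\omega(t,\tau)\,d\tau$ is homogeneous of degree $\lambda$; by construction the second integral is an instance of $O_{p+1}(t^{\lambda+1},u,y)$. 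It remains to reduce $X_w(u\phi^{-1})(\phi y)$ to $X_w u(y)$. Write $\phi=e^{tZ_1}\psi$ with $\psi=e^{tZ_2}\cdots e^{tZ_\nu}$ and apply Lemma \ref{seminuovo} once more to $H(s):=X_w(u\psi^{-1}e^{-sZ_1})(e^{sZ_1}\psi y)$: one obtains $X_w(u\phi^{-1})(\phi y)=X_w(u\psi^{-1})(\psi y)+\int_0^t[Z_1,X_w](u\psi^{-1}e^{-sZ_1})(e^{sZ_1}\psi y)\,ds$. The integral, multiplied by $bt^\lambda$, is an $O_{p+1}(t^{\lambda+1},u,y)$ by the same Fubini argument and property (1). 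Iterating this reduction $\nu$ times peels off every factor of $\phi$ and produces $bt^\lambda X_w u(y)$ plus finitely many length-$(p+1)$ remainders. Summing over the $N$ terms of $O_p(t^\lambda,u,y)$ yields (4) with $c_w$ the accumulated coefficients.

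The main obstacle is the bookkeeping in (4): at each stage of the peeling I must verify that the Fubini-produced weight is still a polynomial of the exact homogeneity $\lambda$ in $(t,s)$, that the commutator produced by Lemma \ref{seminuovo} has length exactly $p+1$, and that the modified inner composition $u\psi^{-1}e^{-sZ_1}$ and outer composition $e^{sZ_1}\psi y$ still fit the rigid template \eqref{nota} with $N,\mu,\nu,b$ controlled by absolute constants as prescribed in Remark \ref{costola}. Conceptually nothing goes beyond Lemma \ref{seminuovo} and Fubini, but the steps must be organized so that every remainder produced genuinely lives in the class $O_{p+1}(t^{\lambda+1},u,y)$ and no estimate leaks outside the stated homogeneity.
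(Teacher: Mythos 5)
Your proposal is correct and follows essentially the same route as the paper: \eqref{laprop} and \eqref{note2} by formal absorption of the factor and of $e^{tZ}$ into the data, \eqref{prie} by bounding the integrand via Proposition \ref{schiaccia2} (with \eqref{diffo} for $p=s+1$) and using \eqref{opp}, and \eqref{iii} by the fundamental theorem of calculus plus Lemma \ref{seminuovo} and Fubini, with the homogeneity of the transferred weights checked exactly as in the paper. The only difference is cosmetic: you peel off $e^{-\tau X}$ and then the factors of $\phi$ one at a time, whereas the paper writes the whole telescoping sum $Y(ug_1^{-1}e^{-\tau X})(e^{\tau X}g_1y)-Yu(y)$ at once and treats each difference by the same mechanism.
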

 \begin{proof}  The proof of \eqref{laprop} and   \eqref{note2} are rather easy
  and   we leave them to the reader.
 So we start with the proof of \eqref{prie}. A
 typical term in $O_{p }(t^\lambda,u,y)$ has the form
\begin{equation}\label{occhio}
 \int_0^t \omega(t,\t)    Y(u\phi^{-1} e^{-\t Z})(e^{\t Z}\phi y)d\t ,
\end{equation}
with $\ell(Y)= p\le s+1$.  Thus, by Proposition \ref{schiaccia2}, we have
$\abs{ Y(u\phi^{-1} e^{-\t Z})(e^{\t Z}\phi y)}\le C$ (observe that we need
 \eqref{diffo}, if $p=s+1$).
Therefore, \eqref{prie} follows from  the property \eqref{opp} of   $\omega$.

  Finally we establish  the key property \eqref{iii}.
    Start from the generic term of $O_p(t^\lambda, u, y)$ in \eqref{occhio},
where we  introduce the notation $g_k:= e^{t  Z_k}\cdots e^{t  Z_\nu} $,
for $k=1,\dots, \nu$ and $g_{\nu+1}$ denotes the identity map. Recall also that $\ell(Y)\le s$.
Therefore,
  we get
 \[
  \begin{aligned}
&\int_0^t  \o(t, \t)   Y(u e^{-t  Z_\nu}\cdots e^{-t Z_1} e^{-\t X})\big( e^{\t X}
e^{t Z_1 }\cdots e^{t  Z_\nu}y\big)  d\t
\\&=\int_0^t  \o(t,\t)
  Y(u  g_1^{-1} e^{-\t X})\big( e^{\t X} g_1 y\big)  d\t
 \\&\quad =
\int_0^t  \o(t,\t) Yu(y)   d\t
  +
  \int_0^t  \o(t,\t) \Big\{ Y(u  g_1^{-1}  e^{-\t X})\big(
   e^{\t X}
 g_1 y\big)-Yu(y)\Big\}  d\t
\\&\quad =bYu(y)t^\lambda
 +\int_0^t \o(t, \t)
   \Big\{Y(u  g_1^{-1}e^{-\t X})\big( e^{\t X}
 g_1 y\big)
-
Y(u  g_1^{-1} )\big(
 g_1 y\big) \Big\} d\t
 \\&\qquad \qquad
    +  \sum_{k=1}^{\nu}
 \int_0^t \o(t, \t)  \Big\{Y(u  g_k^{-1})\big(
 g_k y\big)
-
Y(u  g_{k+1}^{-1})\big(
 g_{k+1}y\big)
\Big\}  d\t.
  \end{aligned}
 \]
Recall  that $Y$ has length $p\le s$.
 The   penultimate term   can be written as
 \begin{equation*}
\begin{aligned}
  \int_0^t  & \o(t,\t ) \Big\{Y(u  g_1^{-1}  e^{-\t X})\big(
   e^{\t  X}
 g_1 y\big)-Yu
(u  g_1^{-1})\big(
 g_1 y\big)\Big\}  d\t
\\& =
 \int_0^t   d\t \; \o(t,\t) \int_0^\t  d\s \frac{d}{d\s}  Y(u  g_1^{-1}  e^{-\s X})\big(
   e^{\s X} g_1 y)
   \\&=\int_0^t d\s \Big\{\int_\s^t \o(t,\t) d\t  \Big\}
   [X,Y](u  g_1^{-1}  e^{-\s X})\big(
   e^{\s X} g_1 y) .
 \end{aligned}
 \end{equation*}
 Observe that, as required,
  the function $\wt\omega (t,\s):= \int_\s^t \o(t,\t) d\t$ satisfies
\begin{equation*}
 \int_0^t \wt\omega(t, \sigma) d\s = \int_{0}^t d\t \o(t, \t)\int_0^\t d\s =
 \int_{0}^t d\t \; \t\o(t, \t) = \wt b t^{\lambda+1},
 \end{equation*}
because $(t, \t)\mapsto \t\o(t, \t)$ is homogeneous of degree $\lambda$.

 The $k-$th term  in the sum has the form
 \[\begin{aligned}
  \int_0^t  &d\t \, \o(t,\t )   \int_0^{t }d\s\frac{d}{d \s}
  Y(u g_{k+1}^{-1} e^{-\s Z_{k}})\big(
e^{\s Z_{k}} g_{k+1} y\big)
\\&=\int_0^t d\s \,\wt \omega(t, \s)  [Z_k, Y]
(u g_{k+1}^{-1} e^{-\s Z_{k}})\big(
e^{\s Z_{k}} g_{k+1} y\big),
  \end{aligned}
 \]
 where $
\wt  \omega(t, \s) :=\int_0^t\o(t,  \t) d\t = b t^\lambda
$ has the correct form. The proof is concluded.
 \end{proof}

\subsection{Higher order non commutative calculus
formulas}\label{higher}
 In order
to prove Theorem \ref{poiche}, we first need to iterate formula
\eqref{lachi}. Start from smooth vector fields $X :=X_j^\s$
 of length one and    $Z:=X_w $ of length $\ell(Z):=|w|$.
 Differentiating
  identity \eqref{lachi} we get, by the Taylor formula
\begin{equation*}
Z (u e^{-t X})(e^{tX} y)= \sum_{k=0}^r\frac{t^k}{k!}\ad_X^k Zu(y)
 +\int_0^t\frac{(t-\t)^r}{r!} \ad_X^{r+1}Z(u e^{-\t X})(e^{\t X} y)d\t,
\end{equation*}
 where we introduced the notation:
$\ad_XZ = [X, Z]$, $\ \ad^2_X Z= [X,[X,Z]]$, etcetera.
In other words,
\begin{equation}\label{skg}
\begin{aligned}Z &(u e^{t X})(  y)  -Z u ( e^{t X} y)
\\& = \sum_{k=1}^r\frac{t^k}{k!}\ad_{-X}^k Zu(e^{tX} y)
 +\int_0^t\frac{(t-\t)^r}{r!} \ad_{-X}^{r+1}Z(u e^{\t X})(e^{-\t X}e^{tX} y)d\t
\\& =\sum_{k=1}^r\frac{t^k}{k!}\ad_{-X}^kZu(e^{tX} y) +O_{r+1+\ell(Z)}(t^{r+1}, u,
e^{tX}y).
\end{aligned}\end{equation}
If we take  $r=s-\ell(Z)$,  we may write
\begin{equation}\label{skg2}
\begin{aligned} Z (u e^{t X})(  y) & -Z u ( e^{t X} y)
\\&
= \sum_{k=1}^{s-\ell(Z) } \frac{t^k}{k!}\ad_{-X}^kYu(e^{tX} y) +O_{s+1}(t^{s-\ell(Z) +1}, u, e^{tX}y).
\end{aligned}\end{equation}
 In view of \eqref{prie},
 this order of expansion is the highest which ensures that the remainder can be estimated with $C t^{s-\ell(Z)+1}$,
 with
 a control on $C$ in term of the constant in \eqref{lipo}, as soon as $y\in K$ and $|t|\le C^{-1}$.

 Next, we seek for a family of higher order  formulas,
 in which we change $e^{tX}$ with an approximate exponential  $\exp_*(t X_w)$.
The coefficients of the expansion \eqref{skg} are all explicit  but
we do not need such an accuracy in the higher order formulae.
 To explain what suffices for our purposes, start with the case of commutators of length two.  Let
  $C_t=C_t(X,Y)=e^{-tY}e^{-tX}e^{tY}e^{tX}$, where $X:=X_j^\s $  and $ Y:= X_k^\s$ are mollified
vector fields with length one.

Let $Z:=X_v^\s $  be a smooth  commutator
with length $\ell(Z):=|v|$.
Assume first that $\ell(Z)=s$. Then, iterating \eqref{skg2} we can write
\begin{equation*}\tag{$F_{2, 1}$}
 Z (uC_t)(x)  -Zu(C_t x)= O_{s+1}(t, u, C_t x) .
\end{equation*}
If instead   $\ell(Z)=s-1$, then some elementary computations based on
   \eqref{skg} give
\begin{equation*}\tag{$F_{2, 2}$}
   \begin{aligned}Z & (uC_t)(x)  -Zu(C_t x)
   \\& =\sum_{k_1+ k_2+ k_3+ k_4=1}
      \frac{t^{k_1+\cdots k_4}}{k_1!\cdots k_4!}\ad_Y^{k_4}\ad_X^{k_3}\ad_{-Y}^{k_2} \ad_{-X}^{k_1} Z u(C_t x)
   \\&\qquad \qquad   + O_{2+\ell (Z)}(t^2, u, C_t x)
      \\&
     =   O_{2+\ell (Z)}(t^2, u, C_t x)=
  O_{s+1}(t^2, u, C_t x).
     \end{aligned}
\end{equation*}
Next, if
  $\ell(Z)=s-2$, (this can happen only if $s\ge 3$),
 then we must expand more. Namely, we have
\begin{equation*}\tag{$F_{2, 3}$}
  \begin{aligned}
   Z & (uC_t)(x) -Zu(C_t x)
\\&
=     \sum_{k_1+k_2+k_3+k_4=1}^{2}
    \frac{t^{k_1+\cdots k_4}}{k_1!\cdots k_4!}\ad_Y^{k_4}\ad_X^{k_3}\ad_{-Y}^{k_2} \ad_{-X}^{k_1}Z u(C_t x)
    \\&\qquad \qquad  +O_{3+\ell(Z)}(t^3, u, C_t x)
\\&     = t^{2}[Z, [X,Y]]u(C_t x) +O_{3+\ell(Z)}(t^3, u, C_t x)
\\&= t^{2}[Z, [X,Y]]u(C_t x) +O_{s+1}(t^3, u, C_t x).
\end{aligned}
\end{equation*}
\label{aggiorno} Finally, if  $ \ell(Z)\le s-3$ (this requires at least $s\ge4$),
 we must expand even more:
  \begin{equation*}\tag{$F_{2, 4}$}
   \begin{aligned}
 &   Z(uC_t)(x) -Zu(C_t x)
    \\&= \hspace{-3 ex}\sum_{k_1+k_2+k_3+k_4=1}^{3}
    \frac{t^{k_1+\cdots k_4}}{k_1!\cdots k_4!}\ad_Y^{k_4}\ad_X^{k_3}\ad_{-Y}^{k_2} \ad_{-X}^{k_1}Z u(C_t x)
     +O_{4+\ell(Z)}(t^4, u, C_t x)
   \\&= t^{2}[Z, [X,Y]]u(C_t x) + t^3\Big\{
   \frac 1 2\ad^2_Y\ad_X Z u(C_t x) -\frac12\ad_Y\ad^2_X Zu(C_t x)
   \\&\quad -\frac12 \ad^2_X\ad_YZu(C_tx)+\frac12\ad_X\ad^2_Y Z u(C_t x)
   \\&\quad -\ad_Y\ad_X\ad_Y Z u(C_t x) +\ad_X\ad_Y\ad_X Zu(C_t x)\Big\}
   +O_{4+\ell(Z)}(t^4, u, C_t x)
.\end{aligned}
  \end{equation*}
Observe that if $\ell(Z)=s-3$, then $O_{4+\ell(Z)}(t^4, u, C_t x) =O_{s+1}(t^4, u, C_t x)$. If instead $\ell(Z)<s-3$, then
we can expand up to the order $O_{s+1} (t^{s+1-\ell(Z)}, u, C_t x) $ by means of  \eqref{iii}.

We have started to  put tags of the form   $(F_{k, \lambda})$
in our formulae. The  number $k$ indicates the length of the commutator we are approximating,
   while the   number $\lambda$ denotes the power of $t$ which controls   the remainder.

Note that   in  $(F_{2,4})$,
    the curly bracket changes sign if we exchange $X$ with $Y$. Briefly, we can write  \begin{equation*}
  \begin{aligned}
   Z (uC_t)(x) &=
   Z u(C_tx) + t^2 [Z, [X, Y]]u(C_tx)
\\&\quad   +    t^3\hspace{-2ex}\sum_{|w|=3+\ell(Z)} \hspace{-2ex}c_w X_{w}u(C_t x) + O_{4+\ell(Z)} (t^{4}, u, C_t x).
  \end{aligned}
 \end{equation*}
 for all $x\in K$, $t\in [0, C^{-1}]$, where the coefficients $c_w$ are determined in $(F_{2, 4})$.
 The corresponding formula for $C_t^{-1}(X,Y)$ is
 \begin{equation*}
  \begin{aligned}
   Z(uC_t^{-1})(x) &=
   Z u(C_t^{-1}x)
   - t^2 [Z, [X, Y]]u(C_t^{-1}x)
   \\&\qquad
   +   t^3\hspace{-1ex}\sum_{|w|=3+\ell(Z)}  \hspace{-1ex}\wt c_w X_{w}u(C_t^{-1} x)
    + O_{4+\ell(Z)} (t^{4}, u, C_t^{-1} x),
  \end{aligned}
 \end{equation*}
where, since $C_t^{-1}(X,Y)=C_t(Y,X)$,
 the coefficients
  $\wt c_w$
  are obtained again from $(F_{2, 4})$, by changing $X$ and $Y$. We are not interested in the explicit knowledge
 of all  the coefficients $c_w $ and $\wt c_w$. We only need to observe  the following
 remarkable  cancellation property:
  \[\sum_{|w|=3+\ell(Z)}(c_w+\wt c_w)X_w(x)=0 \quad\text{for any $x\in K$.}\]

Next we generalize formulae $(F_{2,\lambda})$ above.
 The general statement we prove  tells
  that   this   cancellation persists when  the length of the commutator we are approximating with
  $C_t$ is three or more.
\begin{theorem}\label{grossetto}
For any  $\ell\in\{2, \dots, s\}$, $x\in K$, $t\in [0, C^{-1}]$, the following
family $(F_{\ell, 1}, F_{\ell, 2}, \dots, F_{\ell, s})$ of formulas holds.

\smallskip\noindent{\emph{Formulas  $F_{\ell,k}$.}}
  For any  $C_t= C_t(X_{w_1}, \dots, X_{w_\ell})$, $k=1,\dots,\ell$
   and   for any commutator $Z $ such that
   $\ell(Z)+k\le s+1$, we have
   \begin{equation*}
  \begin{aligned}
   Z(uC_t)(x)-Z u(C_tx) &= O_{k+\ell(Z)}(t^k,u, C_t x)
\\
    Z(uC_t^{-1})(y)-Z u(C_t^{-1}y) &=O_{k+\ell(Z)}(t^k, u, C_t^{-1} x).  \end{aligned}
 \end{equation*}
 \noindent {\emph{Formula $F_{\ell,\ell+1}$.}}  Let $\ell\ge 2$  be such that   $\ell+1\le s$. Then,
for any  $C_t(X_{w_1}, \dots, X_{w_\ell})$ and    $Z$ such that $\ell+1+\ell(Z)\le s+1$,
 \begin{equation*}
  \begin{aligned}
  & Z(uC_t)(x)  -Z u(C_tx) =t^\ell[Z, X_{w}]u(C_t x)  + O_{\ell+1+\ell(Z)}(t^{\ell+1},u,C_t x),
\\&
 Z(uC_t^{-1})(y)  -Z u(C_t^{-1}y)
   = -t^\ell[Z, X_{w}]u(C_t^{-1} x)  + O_{\ell+1+\ell(Z)}(t^{\ell+1},u,C_t^{-1}x).
  \end{aligned}
 \end{equation*}
  \noindent {\emph{Formula $F_{\ell,\ell+2}$.}} 
If $s\ge 4$,  let $\ell\ge 2$ be such that   $\ell+2\le s$. Then,
  for any  $C_t(X_{w_1}, \dots, X_{w_\ell})$ and    $Z$ such that
   $\ell+2+\ell(Z)\le s+1$,   there are numbers  $c_v, \wt
  c_v $, with $|v|=\ell+\ell(Z)+1$, such that
\begin{equation}\label{brandi}
\begin{aligned}
                 Z(uC_t)(x)-Zu(C_tx)& =
                  t^\ell[Z, X_w]u(C_t x) +t^{\ell+1}
                  \hspace{-2ex}\sum_{|v|=\ell+\ell(Z)+1}\hspace{-2ex}c_v X_v u(C_tx)
\\&\qquad + O_{\ell+2+\ell(Z)}(t^{\ell+2}, u, C_t x)
\\             Z(uC_t^{-1})(x)-Zu(C_t^{-1}x) & =
-t^\ell[Z, X_w]u(C_t^{-1} x) +t^{\ell+1}\hspace{-2ex}\sum_{|v|=\ell+\ell(Z)+1}\hspace{-2ex}\wt c_v  X_v
u(C_t^{-1}x)
\\&\qquad + O_{\ell+2+\ell(Z)}(t^{\ell+2}, u, C_t^{-1} x).                 \end{aligned}
\end{equation}
  \noindent{\emph{Cancellation property.}}  If $s\ge 4$,    let $\ell\ge 2$ be such that   $\ell+2\le s$.
    If formulae $F_{\ell,1}$ \dots, $F_{\ell, \ell+2}$ hold, then,
    for any  $C_t(X_{w_1}, \dots, X_{w_\ell})$ and    $Z$ such that $\ell+2+\ell(Z)\le s+1$,
    the coefficients $c_w ,\wt c_w
    $ in \eqref{brandi} satisfy
\begin{equation}
 \label{lls}
 \sum_{|w|=\ell+\ell(Z)+1}(c_w+\wt c_w)X_w(x)=0\qquad \text{for any }\;
x\in K.
\end{equation}
  \noindent{\rm Formulae  $F_{\ell,r}$, with $\ell+3\le r\le s$.} Let $s\ge 5$ and
   assume that $\ell\ge 2$ and $r$ are such that
  $ \ell+3\le r\le    s$. Then,
   for any  $C_t(X_{w_1}, \dots, X_{w_\ell})$ and    $Z$ with $r+\ell(Z)\le s+1$,
   there are $c_v,\wt c_v$ such that
\begin{equation*}
 \begin{aligned}
                Z(uC_t)(x)-Zu(C_tx)& =
                  t^\ell[Z, X_w]u(C_t x) +
                  \hspace{-2ex}\sum_{|v|=\ell+\ell(Z)+1}^{r-1+\ell(Z)}\hspace{-2ex} t^{|v|-\ell(Z)}c_v X_v
                  u(C_tx)
\\&\qquad + O_{r+\ell(Z)}(t^r, u, C_t x),
\\            Z(uC_t^{-1})(x)-Zu(C_t^{-1}x) & =
-t^\ell[Z, X_w]u(C_t^{-1} x) +\hspace{-2ex}\sum_{|v|=\ell+\ell(Z)+1}^{r-1+\ell(Z)}\hspace{-2ex}
t^{|v|-\ell(Z)} \wt c_vX_vu(C_t^{-1}x)
\\&\qquad + O_{r+\ell(Z)}(t^{r}, u, C_t^{-1} x).                \end{aligned}
\end{equation*}
 \end{theorem}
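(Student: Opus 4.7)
The plan is to prove Theorem \ref{grossetto} by induction on the length $\ell$ of the approximate commutator $C_t = C_t(X_{w_1}, \dots, X_{w_\ell})$. The base case $\ell = 2$ is essentially the computation already recorded in formulas $(F_{2,1})$--$(F_{2,4})$, so the work lies in the inductive step. For this I would exploit the recursive definition
\begin{equation*}
C_t(X_{w_1}, \dots, X_{w_\ell}) = D_t^{-1}\, e^{-t X_{w_1}}\, D_t\, e^{t X_{w_1}}, \qquad D_t := C_t(X_{w_2}, \dots, X_{w_\ell}),
\end{equation*}
and peel off the four factors one at a time. The single exponentials $e^{\pm t X_{w_1}}$ are handled by the Taylor formula \eqref{skg2}, while the two factors $D_t, D_t^{-1}$ are absorbed via the inductive hypothesis, which is available at level $\ell-1$ for every formula $F_{\ell-1, k}$ with $k \le s$.

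The key bookkeeping tool is the $O_p(t^\lambda, u, \cdot)$ calculus of Subsection~\ref{reste}: property \eqref{laprop} merges factors of $t$ arising from Taylor expansion with factors from the integral remainder; property \eqref{note2} lets me absorb each newly produced factor $e^{\pm t X_{w_1}}$ into the base point; and property \eqref{iii} repeatedly turns a remainder into its leading $t^\lambda \sum_w c_w X_w u$ plus a remainder of one higher order. After carrying out the peeling in the four factors the result is an expansion of the form
\begin{equation*}
Z(u C_t)(x) - Zu(C_t x) = \sum_{k=1}^{r-1} t^k \, P_k Zu(C_t x) + O_{r+\ell(Z)}(t^r, u, C_t x),
\end{equation*}
with $P_k$ a polynomial in the $\ad_{\pm X_{w_1}}$ operators and in the bracket expressions supplied by the induction. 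The terms with $k < \ell$ vanish by the inductive bounds $F_{\ell-1, j}$ applied to $D_t, D_t^{-1}$ (their would-be leading terms appear with opposite sign in $D_t$ and $D_t^{-1}$ and cancel in pairs), leaving a first nonzero coefficient at $k = \ell$ equal to $[Z,[X_{w_1}, X_{w_2 \dots w_\ell}]] = [Z, X_w]$. This gives $F_{\ell, k}$ for $k \le \ell$ and $F_{\ell, \ell+1}$. The $C_t^{-1}$ versions are obtained either by rerunning the same peeling with reversed order and opposite Taylor signs, or equivalently by noting that $D_t^{-1} e^{-tX_{w_1}} D_t e^{tX_{w_1}}$ is mapped into its inverse by reversing the order of the four factors.

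To establish the cancellation \eqref{lls}, I would compare the two expansions of $Z(uC_t)(x)$ and $Z(uC_t^{-1})(x)$ side by side. The four-factor peeling produces the $t^{\ell+1}$ coefficients $c_v$ (resp.\ $\wt c_v$) as a finite sum of iterated brackets built from $X_{w_1}$, from the bracket $[Z, X_{w_2 \dots w_\ell}]$ produced by the inductive $F_{\ell-1, \ell}$ formula, and from the next-order corrections coming from $F_{\ell-1, \ell+1}$. Because $C_t^{-1}$ is obtained from $C_t$ by simultaneously reversing the order of the four factors \emph{and} flipping the sign of $t$ in the single exponentials, the terms entering $\wt c_v$ are related to those in $c_v$ by an involution that sends the leading contribution $[Z, X_w]$ to its negative but, crucially, sends the $t^{\ell+1}$ expansion coefficients to their opposites; the inductive cancellation already established at level $\ell - 1$ eliminates all the remaining pieces. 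Hence $\sum_v (c_v + \wt c_v) X_v \equiv 0$ on $K$. Once the cancellation is in hand, each higher formula $F_{\ell, r}$ for $r \ge \ell + 3$ follows by one more application of \eqref{iii} to extract the next $t^{|v|-\ell(Z)}$ coefficient from the remainder.

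The main obstacle is the cancellation property: a direct inspection of the $t^{\ell+1}$ coefficient after a fourfold peeling produces a sizeable list of iterated brackets, and showing that they combine to a vector field that vanishes identically on $K$ requires both the antisymmetry $C_t \leftrightarrow C_t^{-1}$ and the Jacobi identity applied to the inductively provided expansions of $D_t$ and $D_t^{-1}$. The rest of the theorem is then essentially an organized bookkeeping exercise using the calculus of Proposition \ref{ossera}.
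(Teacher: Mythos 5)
Your overall inductive scheme—base case $\ell=2$ via $(F_{2,1})$–$(F_{2,4})$, then peel the four factors $D_t^{-1}e^{-tX_{w_1}}D_te^{tX_{w_1}}$ using \eqref{skg2} for the single exponentials and the level-$(\ell-1)$ formulas for $D_t^{\pm 1}$, bookkeeping everything with Proposition~\ref{ossera}—matches the paper's Step~2 in spirit. But your proposed argument for the cancellation property \eqref{lls} has a genuine gap.

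You claim that $C_t^{-1}$ is obtained from $C_t$ by ``reversing the order of the four factors and flipping the sign of $t$ in the single exponentials,'' and that the resulting involution sends the $t^{\ell+1}$ coefficients $c_v$ to $-c_v=\wt c_v$. Neither assertion is correct. Since $C_t=D_t^{-1}e^{-tX_{w_1}}D_te^{tX_{w_1}}$, we have $C_t^{-1}=e^{-tX_{w_1}}D_t^{-1}e^{tX_{w_1}}D_t$, which is \emph{not} the factor reversal with flipped signs (that would give $e^{-tX_{w_1}}D_te^{tX_{w_1}}D_t^{-1}$). The simple identity $C_t^{-1}(X,Y)=C_t(Y,X)$ is special to $\ell=2$; no analogous symmetry exists for general $\ell$ because $D_t$ and $e^{tX_{w_1}}$ play asymmetric roles. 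Moreover, even for $\ell=2$ one does \emph{not} have $\wt c_v=-c_v$ coefficient-by-coefficient (the $\wt c_v$ arise by swapping $X\leftrightarrow Y$, not by negation); the statement \eqref{lls} asserts only that the vector field $\sum(c_v+\wt c_v)X_v$ vanishes, which exploits linear relations among the $X_v$ such as Jacobi and antisymmetry. So trying to match the $c_v$'s term by term against an involution is the wrong strategy and would not close.

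The paper proves the cancellation by a much cleaner device that avoids ever computing the $c_v$'s explicitly: start from the tautology $Zu(x)=Z(uC_t^{-1}C_t)(x)$, expand \emph{both} the outer $C_t^{-1}$ and the inner $C_t$ using $F_{\ell,\ell+2}$, then knock out the cross terms with the low-order formulas $F_{\ell,1}$ and $F_{\ell,2}$; dividing the resulting identity by $t^{\ell+1}$ and letting $t\to0$ yields \eqref{lls} directly. This is the key idea you are missing, and it decouples the cancellation lemma (Step~1) from the fourfold peeling (Step~2), where the cancellation is then fed back in together with the Jacobi identity $-[[X,Z],X_w]+[X,[Z,X_w]]=[Z,[X,X_w]]$ and the elementary identity $\sum_{k+h=s}(-1)^k/(k!h!)=0$ to collapse the Taylor remainders. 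If you keep your peeling framework but replace your symmetry heuristic for the cancellation with this composition trick, the argument goes through.
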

 \noindent Observe again  that in the
  formula $F_{\ell,k},$ $\ell$ is the length of the commutator which defines $C_t,$ while $k$ is the degree of the power of
  $t$ which controls the remainder.

\begin{proof}[Proof of Theorem \ref{grossetto}. ]
If   $\ell=2$, we have already proved the statement.
See  formulae $(F_{2, 1})$,
$(F_{2, 2})$, $(F_{2, 3})$ and  $(F_{2, 4})$, p.~\pageref{aggiorno},  and recall property \eqref{iii} of the remainders. The
proof will be accomplished in two steps.

\smallskip\noindent{\it{Step 1.}  }
Let $s\ge 4$ and $\ell\ge 2$ be such that $\ell+2\le s$. Assume that   $F_{\ell,1}, F_{\ell,2},\dots, F_{\ell, \ell+2}$ hold. Then the cancellation \eqref{lls} holds for any $C_t(X_{j_1}, \dots, X_{j_\ell})$ and $W$ such that $\ell+2+\ell(W)\le s+1$.

\smallskip\noindent{\it{Step 2.} } Assume that for some $\ell\ge 2$, all formulae $F_{\ell,k}$ hold, for $k=1, \dots, s$.  Then formula $F_{\ell+1,k}$ holds, for any $k=1, \dots, s$.

\smallskip\noindent\it{Proof of Step 1. }\rm Let   $C_t= C_t(X_{w_1}, \dots, X_{w_\ell})$ and $Z$ such that $\ell(Z)+\ell+2\le s+1$.
 Applying twice formula
$F_{\ell,\ell+2}$, we obtain,
\begin{equation}\label{diciotto}
\begin{aligned}
Z u(x)&= Z(uC_t^{-1}C_t)( x)
\\&= Z(uC_t^{-1})(C_t x)
   + t^\ell[Z, X_{w}](uC_t^{-1})(C_t x)
\\& \quad +t^{\ell+1}
\sum_{|v|=\ell+\ell(Z)+1}c_v X_v
(uC_t^{-1})(C_t x)
\\&\quad + O_{\ell+2+\ell(Z)}(t^{\ell+2},uC_t^{-1},C_t x)
\\&
= Zu(x)-t^{\ell}[Z, X_{w}]u(x) +t^{\ell+1}\sum_{|v|=\ell+\ell(Z)+1}\wt c_vX_{v}u(x)
\\&\qquad + O_{\ell+\ell(Z)+2}(t^{\ell+2},u,x)
  + t^\ell [Z, X_{w}](uC_t^{-1})(C_t x)
\\&\qquad + t^{\ell+1} \sum_{|v|=\ell+\ell(Z)+1}c_vX_v(uC_t^{-1})(C_tx)
\\&\qquad + O_{\ell+2+\ell(Z)}(t^{\ell+2}, uC_t^{-1}, C_t x) .\end{aligned}
 \end{equation}
Observe first that property \eqref{note2} gives
\[
 O_{\ell+2+\ell(Z)}(t^{\ell+2}, uC_t^{-1}, C_t x)=
 O_{\ell+2+\ell(Z)}(t^{\ell+2}, u , x).
\]  Later on,   we will tacitly use such
property many times. Recall that $\ell\ge 2. $
By means of $F_{\ell,2}$ and $F_{\ell,1}$, respectively, we obtain
\[\begin{aligned}{}
 [Z, X_{w}](uC_t^{-1})(C_t x) & =[Z, X_{w}]u(x)+O_{2+\ell(Z)+\ell}(t^2,u,x) \quad \text{ and}
\\
 X_v(uC_t^{-1})(C_tx)& = X_vu(x) + O_{2+\ell+\ell(Z)}(t,u,x).  \end{aligned}
\]
Inserting this information into \eqref{diciotto} gives, after algebraic simplifications
\[
\begin{aligned}
0&= t^{\ell+1}  \sum_{|v|=\ell+\ell(Z)+1}\wt c_vX_vu(x) + O_{\ell+2+\ell(Z)}(t^{\ell+2}, u, x) +t^\ell
 O_{2+\ell+\ell(Z)}(t^{2}, u, x)
 \\&\quad + t^{\ell+1}\Big(\sum_{|v|=\ell+\ell(Z)+1}c_vX_vu(x) + O_{\ell+2+\ell(Z)} (t,u,x)\Big).
\end{aligned}
\]
To conclude the proof, recall \eqref{laprop}, divide by $t^{\ell+1}$ and let $t\to 0$.

\medskip\noindent\it Proof of Step 2. \rm
 Let $\ell+2\le s$.
  We prove
  formula
   $F_{\ell+1,\ell+2}$, which is the most significant among all formulae $F_{\ell+1,1}, \dots, F_{\ell+1,s}$. Indeed, once
   $F_{\ell+1,\ell+2}$ is proved, if $\ell+3\le s$, then
   formulae $F_{\ell+1,\ell
   +3}, \dots, F_{\ell+1, s}$  follow easily from $F_{\ell+1, \ell+2}$
    and from property \eqref{iii}. On the other side, the
   lower order formulae
   $F_{\ell+1,k}$ with $k <\ell+2$ are easier (just truncate at the correct
order all the expansions in the proof below).

To start, recall that we are assuming that   $F_{\ell,1}, \dots,
F_{\ell, s}$ hold.
Let, for $t>0$
\begin{equation}\label{ghj}
\begin{aligned}
 C_t:&= C_t(X_{w_1}, \dots, X_{w_\ell})\quad \text{and} \quad
\\C_t^0:&= C_t(X,
 X_{w_1},\dots, X_{w_\ell}) =C_t^{-1} e^{-tX}C_te^{tX},
\end{aligned}
\end{equation}
where  $X=X_{w_0}$. Let $Z$  be a commutator with $\ell(Z)+ \ell+2\le s+1$.
In the subsequent formulae, we expand everywhere up to a remainder of the form
$O_{\ell+2+\ell(Z)}(t^{\ell+2},u,C_t^0 x)$.
By \eqref{skg},
\begin{equation*}
 \begin{aligned}
 Z(u C_t^0) (x) & = Z(uC_t^{-1} e^{-tX}C_t)(e^{tX}x) + t[-X, Z]
 (uC_t^{-1} e^{-tX}C_t)(e^{tX}x)
 \\&\qquad +\sum_{k=2}^{\ell+1} \frac{t^k}{k!}\ad_{-X}^k Z
 (uC_t^{-1} e^{-tX}C_t)(e^{tX}x)
\\&\qquad +O_{\ell+2+\ell(Z)}(t^{\ell+2},uC_t^{-1} e^{-tX} C_t, e^{tX} x)
 \\& =:  (A) +(B) + (C) +O_{\ell+2+\ell(Z)}(t^{\ell+2},u, C_t^0 x),\end{aligned}
\end{equation*}
 where we also used  \eqref{note2}.
Next we use   $F_{\ell,\ell+2}$ in $(A).$
\[
 \begin{aligned}
  (A)&= Z(uC_t^{-1} e^{-tX})(C_te^{tX}x) +t^\ell[Z, X_{w}]
  (uC_t^{-1} e^{-tX})(C_te^{tX}x)
\\&\quad
   + t^{\ell+1}\sum_{|v|=\ell+\ell(Z)+1} c_v X_v
  (uC_t^{-1} e^{-tX})(C_te^{tX}x) +O_{\ell+2+\ell(Z)}(t^{\ell+2},u,C_t^0 x)
  \\&
  =:(A_1)+ (A_2) + (A_3) +O_{\ell+2+\ell(Z)}(t^{\ell+2},u,C_t^0 x).
 \end{aligned}
\]
We first treat $(A_1)$. By  \eqref{skg},
\begin{equation}\label{china}
 \begin{aligned}
  (A_1)&=Z(uC_t^{-1} )(e^{-tX}C_te^{tX}x) +t[X,Z](uC_t^{-1} )(e^{-tX}C_te^{tX}x)
  \\&
  \quad + \sum_{k=2}^{\ell+1} \frac{t^k}{k!}\ad_X^kZ (uC_t^{-1}
  )(e^{-tX}C_te^{tX}x)+ O_{\ell+2+\ell(Z)} (t^{\ell+2},u,C_t^0x).
\end{aligned}\end{equation}
Consider now  the various terms in $(A_1)$. First use $F_{\ell,\ell+2}$ to get
\[\begin{aligned}
 Z(uC_t^{-1} )(e^{-tX}C_te^{tX}x)
& =Zu(C_t^0 x)
    - t^\ell[Z,X_{w}]u(C_t^0x)
\\&\quad  + t^{\ell+1}\sum_{|v|=\ell+\ell(Z)+1 }\wt c_v
X_{v}u(C_t^0x)
\\& \quad + O_{\ell+2+\ell(Z)}(t^{\ell+2},u,C_t^0 x).\end{aligned}\]
Moreover, by $F_{\ell,\ell+1}$ we get
\[\begin{aligned}\,
  t[X,Z]&(uC_t^{-1} )(e^{-tX}C_te^{tX}x)
  \\& = t\Big\{[X,Z]u( C_t^0 x) -t^{\ell}
  [[X,Z], X_{w}]u(C_t^0x) +O_{\ell+\ell(Z)+2}(t^{\ell+1},u, C_t^0x) \Big\}.
  \end{aligned}
\]
Finally, we use $F_{\ell,\ell+2-k}$ in the $k-$th term of the sum in \eqref{china}. Observe that $\ell+2-k\in \{1, \dots, \ell\}$ so that we use only remainders.
\[
 \frac{t^k}{k!}\ad_X^kZ (uC_t^{-1}
  )(e^{-tX}C_te^{tX}x) =  \frac{t^k}{k!}\Big\{\ad_X^kZ u
  ( C_t^0x) + O_{\ell+\ell(Z)+2}(t^{\ell+2-k},u, C_t^0 x)\Big\}.
\]
Therefore
\[\begin{aligned}(A_1)&
  = Zu(C_t^0 x)
  - t^\ell [Z,X_{w}]u(C_t^0x) + t^{\ell+1}\sum_{|v|=\ell+1+\ell(Z)}\wt c_v
X_{v}u(C_t^0x)
  \\&\quad + t [X,Z]u(C_t^0x)- t^{\ell+1}[[X,Z], X_{w}]u(C_t^0x)
  \\&\qquad +
   \sum_{k=2}^{\ell+1} \frac{t^k}{k!}\ad_X^kZ u
  (C_t^{0}x)+ O_{\ell+2+\ell(Z)}(t^{\ell+2},u,C_t^0 x).
   \end{aligned}
\]

Next we consider $(A_2)$. Formula \eqref{skg} gives
\[
 \begin{aligned}
  (A_2)&= t^\ell[Z, X_w](uC_t^{-1})(e^{-tX}C_t e^{tX} x)
  \\&\qquad  + t^{\ell+1}[X, [Z, X_w]]
(uC_t^{-1})(e^{-tX}C_t e^{tX} x)
+O_{\ell+\ell(Z)+2}(t^{\ell+2}, u, C_t^0 x). \end{aligned}
\]
Since $\ell\ge 2$, formulas $F_{\ell,2} $  and $F_{\ell,1}$ give respectively
\[\begin{aligned}
 t^\ell[Z, X_w](uC_t^{-1})(e^{-tX}C_t e^{tX} x)&
=t^\ell[Z, X_w]u(C_t^0 x)
+ O_{\ell+\ell(Z)+2}(t^{\ell+2}, u, C_t^0 x) ,
\\  t^{\ell+1}[X, [Z, X_w]](uC_t^{-1})(e^{-tX}C_t e^{tX} x)&
= t^{\ell+1}[X, [Z, X_w]]u(C_t^0 x)
\\&\quad  + O_{\ell+\ell(Z)+2}(t^{\ell+2},
   u, C_t^0 x),\end{aligned}
\]
so that
\[
 \begin{aligned}
  (A_2)&= t^\ell[Z, X_{w}]u(C_t^0x) + t^{\ell+1}[X, [Z, X_{w}]]
  u(C_t^0x)+O_{\ell+\ell(Z)+2}(t^{\ell+2},u, C_t^0 x).
 \end{aligned}
\]

To handle $(A_3)$,   observe that  a repeated application of \eqref{skg}  gives
\[
(A_3)=
  t^{\ell+1}\sum_{|v|=\ell+\ell(Z)+1} c_v X_v
  u(C_t^0x)+O_{\ell+\ell(Z)+2}(t^{\ell+2},u,C_t^0 x).
\]

Next we study $(B)$. Start with formula $F_{\ell, \ell+1}$:
\[
 \begin{aligned}
  (B)&=
 t[-X,Z](uC_t^{-1} e^{-tX})(C_te^{tX}x)  + t^{\ell+1}[[-X, Z], X_{w}](uC_t^{-1}
e^{-tX})(C_te^{tX}x)
 \\&\qquad + O_{\ell+\ell(Z)+2}(t^{\ell+2}, u, C_t^0 x)
  \\&= t[-X,Z](uC_t^{-1} e^{-tX})(C_te^{tX}x)
 \\&\qquad
 + t^{\ell+1}[[-X, Z], X_{w}]
  u(C_t^0x)+ O_{\ell+\ell(Z)+2}(t^{\ell+2}, u, C_t^0x)
\\& = : (B_1) + (B_2)+  O_{\ell+\ell(Z)+2}(t^{\ell+2}, u, C_t^0x),
 \end{aligned}
\]
We first consider  $(B_1)$. In view of \eqref{skg}, we obtain
\[
 \begin{aligned}
(B_1)&=   t[-X, Z](uC_t^{-1} )(e^{-tX}C_te^{tX}x)
 - t\sum_{k=1}^\ell \frac{t^k}{k!}\ad_{X}^k [X,
Z](uC_t^{-1})(e^{-tX}C_te^{tX}x
 )\\& +t O_{\ell+\ell(Z)+2}(t^{\ell+1}, u, C_t^0 x).
  \end{aligned}
\]
But by $F_{\ell,\ell+1}$ we get
\[
 \begin{aligned}
  t[-X, Z](uC_t^{-1} )(e^{-tX}C_te^{tX}x)&= t[-X, Z]u(C_t^0
  x)-t^{\ell+1}[[-X,Z], X_w]u(C_t^0 x)
  \\&
  +O_{\ell+\ell(Z)+2}(t^{\ell+2},u, C_t^0 x),
 \end{aligned}
\]
while for any $k=1, \dots, \ell$,
 formula $F_{\ell, \ell+1-k}$ gives
 \[\begin{aligned}
      t\frac{t^k}{k!} &\ad_X^k([X, Z])
 (uC_t^{-1} )
 (e^{-tX}C_te^{tX}x)
\\&=
 \frac{t^{k+1}}{k!} \ad_X^{k+1}Z u (C_t^0x)+
 O_{\ell+\ell(Z)+2}(t^{\ell+2 }, u, C_t^0x) .
   \end{aligned}
 \]
 Therefore
 \[
 \begin{aligned}
(B_1)&=t^{\ell+1}[[X,Z],X_{w}]u(C_t^0 x)
  - \sum_{k=0}^\ell
 \frac{t^{k+1}}{k!}\ad_{X}^{k+1} Zu(C_t^0 x)+O_{\ell+\ell(Z)+2}(t^{\ell+2}, u,
C_t^0x).
  \end{aligned}
\]
Observe that   $t^{\ell+1}[[X,Z],X_{w}]u(C_t^0 x)=-(B_2)$.

Finally we consider $(C)$. In the $k-$th term of the sum use   formula
$F_{\ell,\ell+2-k}$. Then
\[
 \begin{aligned}
  (C)&=\sum_{k=2}^{\ell+1}
  \frac{t^k}{k!} \ad_{-X}^kZ (uC_t^{-1}e^{-tX})(C_t e^{tX} x)
  +O_{\ell+2+\ell(Z)}(t^{\ell+2},u, C_t^0x) =\text{by (\ref{skg})}
 \\&=\sum_{k=2}^{\ell+1}\frac{t^k}{k!}\Big\{\sum_{h=0}^{\ell+1-k}\frac{t^h}{h!}
  \ad_{X}^h\ad_{-X}^k Z(uC_t^{-1})
  (e^{-tX}C_te^{tX}x)
  \\&\qquad\qquad+O_{\ell+\ell(Z)+2}(t^{\ell+2-k},u,C_t^0x)
  \Big\}+O_{\ell+\ell(Z)+2}(t^{\ell+2},u, C_t^0 x)
  \\&=
 \sum_{k=2}^{\ell+1} \sum_{h=0}^{\ell+1-k}\frac{t^{k+h}}{k!h!}
  (-1)^k\ad_{X}^{k+h} Z u
  (C_t^{0} x)
  +O_{\ell+\ell(Z)+2}(t^{\ell+2},u, C_t^0 x).
 \end{aligned}
\]

Collecting together all the previous computations and making some
simplifications (in particular we need  here the cancellation
property \eqref{lls}),  we get
\[
 \begin{aligned}
  Z(uC_t^0 )(x)&= (A_1)+ (A_2)+(A_3)+(B_1)+(B_2)+(C)
  \\& =Zu(C_t^0 x)
 + t^{\ell + 1}\big\{-[[X, Z],X_{w}]
  u(C_t^0 x)
  + [X,[Z, X_{w}]] u(C_t^0 x)\big\}
\\&\quad+\sum_{k=1}^{\ell+1}\frac{t^k}{k!}\ad_X^kZu(C_t^0 x)
-\sum_{k=0}^{\ell}\frac{t^{k+1}}{k!}\ad_X^{k+1}Zu(C_t^0 x)
\\&
\quad+ \sum_{k=2}^{\ell+1} \sum_{h=0}^{\ell+1-k}\frac{t^{k+h}}{k!h!}
  (-1)^k\ad_{X}^{k+h}Zu(C_t^0 x)
  +O_{\ell+\ell(Z)+2}(t^{\ell+2},u, C_t^0 x)
\\&=: Zu(C_t^0 x)+t^{\ell+1}\{\cdots\}  +(1)+(2)+(3) +
O_{\ell+\ell(Z)+2}(t^{\ell+2},u, C_t^0 x). \end{aligned}
\]
The Jacobi identity gives
$
t^{\ell+1} \{ \cdots\}=t^{\ell+1} [Z, [X, X_{w}]],
$
which is the desired term.

Ultimately we need to consider all the terms with sums.  Changing  $k$
and $h$ in $(2)$, we may write
\[
\begin{aligned} (2)+(3) & =\sum_{k=1}^{\ell+1}\sum_{h=0}^{\ell+1-k}(-1)^k\frac{t^{k+h}}{k!h!}
\ad_X^ { k+h }
 Zu(C_t^0 x)\qquad\text{ and}
\\
 (1)+Zu(C_t^0 x) & = \sum_{h=0}^{\ell+1}\frac{t^h}{h!}\ad_X^hZ u(C_t^0 x).
\end{aligned}\]
Therefore,
\[
\begin{aligned} (1)+(2) +(3)
  +Zu(C_t^0
  x)&
=\sum_{k=0}^{\ell+1}\sum_{h=0}^{\ell+1-k}(-1)^k\frac{t^{k+h}}{k!h!}\ad_X^{k+h}
 Zu(C_t^0 x)
 \\&= \sum_{s=0}^{\ell+1 }\Big( \sum_{\substack{k+h=s\\k,h\ge 0}}\frac{(-1)^k}{k!h!}
\Big) t^s\ad^s_X Z u(C_t^0 x)
=Zu(C_t^0x),
\end{aligned}\]
because  $\displaystyle{\sum_{\substack{k+h=s\\k,h\ge 0}}\frac{(-1)^k}{k!h!}=0}$ for all $s\ge 1$.
The proof of {\it Step 2 } and of Theorem  \ref{grossetto} is concluded. \end{proof}

\subsection{Derivatives of approximate exponentials}\label{cuil2}

Here we give the formula for the derivative of an approximate exponential. All the subsection is written for
the mollified  vector fields $X_j^\s$, but we drop everywhere the supesrcript.

 \label{tazzurella}

\begin{theorem}\label{poiche}
 There is $t_0>0$   such that, for any
$\ell\in\{2,\dots, s\} $,  $w=(w_1, \dots, w_\ell)$, letting $C_t=C_t(X_{w_1}, \cdots, X_{w_\ell})$, there are constants $a_w, \wt a_w$
 such that,  for any $x\in K$, $t\in
 [0, t_0]$,
\begin{equation}
  \label{G1}
  \frac{d}{dt}u(C_t x )= \ell t^{\ell-1} X_wu(C_tx) +\sum_{|v|=\ell+1}^s a_v t^{|v|-1}
  X_vu(C_tx)
+O_{s+1}(t^s,u, C_tx),
\end{equation}
\begin{equation}  \label{G2}
\begin{aligned}
  \frac{d}{dt}u(C_t^{-1} x) = & - \ell t^{\ell-1} X_w u(C_t^{-1}x) +\sum_{|v|=\ell+1}^s \wt a_v
  t^{|v|-1}
  X_v u(C_t^{-1}x)
\\&  +O_{s+1}(t^s, u,C_t^{-1}x).
\end{aligned}
\end{equation}
where, if $\ell=s$, the sum is empty, while, if   $ 2\le \ell<s$, we
have   the  cancellation
\begin{equation}
 \label{G3}
 \sum_{|w|=\ell+1}\big\{a_w+\wt a_w\big\} X_w(x)=0\quad\text{ for all } x\in K.
\end{equation}
\end{theorem}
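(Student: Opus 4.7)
I would prove Theorem~\ref{poiche} by induction on $\ell$, exploiting the recursive identity
\[
C_t = D_t^{-1}\, e^{-tX}\, D_t\, e^{tX},\qquad X:=X_{w_1},\quad D_t:=C_t(X_{w_2},\dots,X_{w_\ell}),
\]
so that $D_t$ is itself an approximate commutator, of length $\ell-1$. For $\ell=2$ one has $C_t = e^{-tY}e^{-tX}e^{tY}e^{tX}$; differentiating termwise by the chain rule and invoking Lemma~\ref{seminuovo} together with the elementary expansions $F_{2,k}$ from Theorem~\ref{grossetto} to transport every factor to the common base point $C_tx$, the length-one contributions cancel and the leading term $2t[X,Y]u(C_tx)$ emerges, with a remainder of the type $O_{s+1}(t^s,u,C_tx)$ described in \eqref{nota}.

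For the inductive step I would write
\[
\frac{d}{dt}u(C_tx) = \phi_a+\phi_b+\phi_c+\phi_d,
\]
where the four contributions correspond, respectively, to the four occurrences of $t$ in $D_t^{-1}e^{-tX}D_te^{tX}x$. The exponential contributions $\phi_b,\phi_d$ produce $\mp X$ acting from the inside on $u$ composed with the surrounding factors; the $D_t^{\pm 1}$ contributions $\phi_a,\phi_c$ are expanded via the inductive hypothesis applied to $D_t$ and $D_t^{-1}$. All four resulting expressions are then transported to the common point $C_tx$ by repeated application of the expansion formulas $F_{\ell-1,k}$ of Theorem~\ref{grossetto}, with integral remainders manipulated as prescribed by Proposition~\ref{ossera}. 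The leading $t^{\ell-1}$ contribution then assembles, via the Jacobi identity, into $\ell[X,X_{(w_2,\dots,w_\ell)}]u(C_tx) = \ell\,X_w u(C_tx)$; intermediate ``spurious'' terms at orders $t^{\ell},\dots,t^{s-1}$ are killed by the cancellation property \eqref{lls} available for the shorter commutator $X_{(w_2,\dots,w_\ell)}$, exactly as in Step~2 of the proof of Theorem~\ref{grossetto}. Formula \eqref{G2} is proved along identical lines, or can be derived from \eqref{G1} by differentiating $C_t\circ C_t^{-1}=\mathrm{id}$ and applying the formulas $F_{\ell,k}$ of Theorem~\ref{grossetto}.

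The cancellation \eqref{G3} would then be obtained by differentiating the tautology $u(x) = u(C_t^{-1}C_tx)$ with respect to $t$, substituting \eqref{G1} and \eqref{G2} on both sides, using the formulas $F_{\ell,k}$ to transport every term to the base point $x$, dividing by $t^{\ell}$ and letting $t\to 0$: the coefficient of $t^{0}$ in the resulting identity reads precisely $\sum_{|w|=\ell+1}(a_w+\wt a_w)X_w(x)$ and therefore must vanish. This mirrors Step~1 of the proof of Theorem~\ref{grossetto}.

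The main technical obstacle is the combinatorial bookkeeping in the inductive step: one must carry along four groups of expansions of order up to $t^{s-1}$, verify term by term that every remainder is genuinely of the type $O_{s+1}(t^s,u,C_tx)$ in the sense of \eqref{nota} (so that the constants remain stable as $\sigma\to 0$ when working with the mollified fields $X_j^\s$), and ensure that all spurious intermediate-order contributions cancel via \eqref{lls} at the appropriate lower step, leaving only the desired commutator structure on the right-hand side.
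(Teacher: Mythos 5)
Your proposal follows essentially the same route as the paper: induction on $\ell$ via the recursion $C_t(X_{w_1},\dots,X_{w_\ell})=D_t^{-1}e^{-tX_{w_1}}D_te^{tX_{w_1}}$, a four-term splitting of the $t$-derivative, transport of every term to the common base point by the formulas $F_{\ell,k}$ of Theorem~\ref{grossetto}, reassembly of the leading term via the Jacobi identity, and derivation of \eqref{G3} by differentiating $u(C_t^{-1}C_tx)=u(x)$, dividing by $t^{\ell}$ and letting $t\to 0$. The only slip is a label: the cancellation that kills the spurious $t^{\ell}$-order terms in the two $D_t^{\pm1}$-contributions is the inductive-hypothesis cancellation \eqref{G3} at the lower step (the analogue of \eqref{lls} for the coefficients $a_v,\wt a_v$), not \eqref{lls} itself, which enters only inside the proof of the $F_{\ell,k}$ formulas.
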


From Theorem \ref{poiche} it is  very easy to obtain the following result:

\begin{theorem}\label{dididi}For any commutator $X_w$ with length $|w|=\ell\le s$, we have, for $x\in K$ and $t\in[-t_0, t_0] $,
 \begin{equation}\begin{aligned} \label{daeo}
  \frac{d}{dt}u(e_*^{tX_w}(x))& = X_wu(e_*^{tX_w}(x))
   +\sum_{|v|=\ell+1}^s \a_v(t) X_vu
 \big(e_*^{tX_w}(x)\big)
 \\& +O_{s+1}\big(|t|^{(s+1-\ell)/\ell},u,e_*^{tX_w}(x)\big),
 \end{aligned}\end{equation}
where the sum is empty if $\ell=s$, $\a_v(t) =  \ell^{-1} a_v   t^{(|v|/\ell)-1)}$,
 if $t>0$ and $\a_v(t) =- \ell^{-1} \wt a_v   |t|^{(|v|/\ell)-1)}$ if $t<0$.
In particular, the map $(t,x)\longmapsto e_*^{tX_w}(x)$ is of class
$C^1$ on $(-t_0,t_0)\times\Omega'.$
  \end{theorem}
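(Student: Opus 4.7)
My plan is to deduce this statement directly from Theorem \ref{poiche} by applying the chain rule to the change of variable $\tau=|t|^{1/\ell}$ implicit in the definition \eqref{approdo} of $e_*^{tX_w}$. I would treat $t>0$ in detail; the negative case is symmetric.

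For positive $t$, I would set $\tau=t^{1/\ell}$, so that $e_*^{tX_w}(x)=C_\tau(X_{w_1},\dots,X_{w_\ell})(x)$ and $d\tau/dt=\ell^{-1}\tau^{1-\ell}$. Inserting formula \eqref{G1} into the chain rule yields
\begin{equation*}
\frac{d}{dt}u(C_\tau x)=\frac{\tau^{1-\ell}}{\ell}\Bigl[\ell\tau^{\ell-1}X_w u(C_\tau x)+\sum_{|v|=\ell+1}^{s}a_v\tau^{|v|-1}X_v u(C_\tau x)+O_{s+1}(\tau^s,u,C_\tau x)\Bigr].
\end{equation*}
The leading factor $\ell\tau^{\ell-1}$ cancels against $\tau^{1-\ell}/\ell$ to produce $X_w u(e_*^{tX_w}x)$. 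Each middle term becomes $\ell^{-1}a_v\tau^{|v|-\ell}X_v u(C_\tau x)=\ell^{-1}a_v t^{|v|/\ell-1}X_v u(e_*^{tX_w}x)$, which is exactly $\alpha_v(t)X_v u(e_*^{tX_w}x)$. The remainder, by the size bound \eqref{prie}, is controlled by $C\tau^{s+1-\ell}=C|t|^{(s+1-\ell)/\ell}$, matching the claim.

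The case $t<0$ is analogous: with $\tau=|t|^{1/\ell}$, we have $e_*^{tX_w}(x)=C_\tau^{-1}x$ and $d\tau/dt=-\ell^{-1}\tau^{1-\ell}$. Inserting \eqref{G2} into the chain rule, the extra sign flip turns the leading $-\ell\tau^{\ell-1}X_w u$ into $+X_w u(e_*^{tX_w}x)$, and each middle term into $-\ell^{-1}\tilde a_v|t|^{|v|/\ell-1}X_v u(e_*^{tX_w}x)$, i.e.\ $\alpha_v(t)X_v u(e_*^{tX_w}x)$ with the prescribed sign. For the $C^1$ conclusion, I would observe that $|v|\ge\ell+1$ and $\ell\le s$ force the exponents $|v|/\ell-1$ and $(s+1-\ell)/\ell$ to be strictly positive. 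Hence, as $t\to 0^\pm$, every term on the right-hand side of \eqref{daeo} other than $X_w u(e_*^{tX_w}x)$ vanishes, the two one-sided limits agree with $X_w u(x)$, and the time derivative is jointly continuous on $(-t_0,t_0)\times\Omega'$. Together with the standard smooth dependence on initial data for the ODEs defining the individual exponential factors composing $C_\tau$, this yields the joint $C^1$ statement.

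The only mild subtlety, and the step I would handle most carefully, is justifying that the $\tau^{1-\ell}$ factor produced by the chain rule, when applied to the integral remainder $O_{s+1}(\tau^s,u,C_\tau x)$ of \eqref{nota}, still yields an object in the class \eqref{nota} of scale $\tau^{s+1-\ell}$. For the conclusions of Theorem \ref{dididi} it suffices to invoke the size bound \eqref{prie}; if a structured integral remainder is needed downstream, one may absorb $\tau^{1-\ell}$ by rescaling the polynomial weight $\omega$ inside the integral. The cancellation property \eqref{G3} from Theorem \ref{poiche} plays no role here, since the $t>0$ and $t<0$ formulas are written with independent coefficients $\alpha_v$ and need not be reconciled at $t=0$.
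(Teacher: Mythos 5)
Your derivation of \eqref{daeo} spells out precisely what the paper leaves as ``follows immediately from \eqref{G1}, \eqref{G2} and the definition \eqref{approdo}'': the chain rule with $\tau=|t|^{1/\ell}$, and your bookkeeping of the coefficients $\alpha_v(t)$ matches the statement in both sign cases. Your observation that the cancellation property \eqref{G3} plays no role here is also correct --- the coefficients $\alpha_v$ are specified separately on each half-line, so no reconciliation at $t=0$ is needed; \eqref{G3} is consumed entirely inside the inductive proof of Theorem~\ref{poiche}. Your remark about the remainder is well taken: the factor $\tau^{1-\ell}$ takes the weight out of the polynomial class of Subsection~\ref{reste}, but only the size bound from \eqref{prie} is used, and that you have.

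The $C^1$ part has a small but real gap. You show that the one-sided limits $\lim_{t\to 0^\pm}\frac{d}{dt}u(e_*^{tX_w}x)$ both equal $X_w(x)$, and then assert that the time derivative is jointly continuous on $(-t_0,t_0)\times\Omega'$. But agreement of the one-sided limits of the derivative does not by itself give existence of the derivative at $t=0$; the chain-rule computation only produces the derivative for $t\neq 0$. You need the elementary fact that if $f$ is continuous at $0$, differentiable on a punctured neighborhood, and $\lim_{t\to 0}f'(t)$ exists, then $f'(0)$ exists and equals the limit. This is exactly where the paper invokes l'H\^opital's rule, together with the uniformity of the limit \eqref{cjc} in $x\in\Omega'$ to then upgrade pointwise existence at $(0,x)$ to joint continuity. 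Adding that one step closes the gap; the rest of the argument, including the appeal to smooth dependence on initial data to handle $t\neq 0$ and the $\nabla_x$ component, is sound and matches the paper's route.
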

Example \ref{wright} shows that, even if the vector fields are smooth, then the map $\exp_*(t X_w)$
is at most $C^{1, \a}$ for some $\a< 1$.
\begin{proof} [Proof of Theorem \ref{dididi}] Formula \eqref{daeo} follows immediately from \eqref{G1}, \eqref{G2} and
the definition \eqref{approdo} of $e_*$.
We only need to show now that the map is $C^1$ in both variables $t,x$.

Recall that the vector fields $X_j^\s$ are smooth and in particular $C^1$. By classical ODE
theory, see \cite[Chap.~5]{Ha},
 any map of the form
$(\t_1, \dots, \t_\nu,x)\mapsto  e^{\t_1 X_{i_1}}\cdots e^{\t_\nu
X_{i_\nu}}x$ is $C^1$ if the $\t_j$'s belong to some neighborhood of the origin
and $x\in \Omega'$.
This implies that for any commutator $X_w$, the
map $\nabla_x \exp_*(tX_w)x$ is   continuous on
$(t,x)\in I \times \Omega'$, while
$\frac{d}{dt} \exp_*(tX_w)x $ is continuous in
$(t,x)\in \big(I\setminus\{0\}\big)\times \Omega'$.

Next we prove that  $\frac{d}{dt}\exp_*(tX_w)x$ exists and it is
continuous also at all points of the form $(0,x)$. Observe first
that formula \eqref{daeo} gives
\begin{equation}
\label{cjc}
\lim_{t\to 0}\frac{d}{d t}\exp_*(tX_w)x=  X_w(x)\qquad\text{uniformly in } x\in
\Omega'.
\end{equation}
Now, \eqref{cjc} and l'H\^opital's rule imply that
 $
 \frac{d}{dt} \exp_*(tX_w)x\big|_{t=0} = X_w(x),
 $ for all $x\in \Omega'$. Finally, the uniformity of the limit  ensures that
the map
 $(t,x)\mapsto\frac{d}{dt} \exp_*(tX_w)x $ is actually continuous in
$I\times\Omega'$. \end{proof}

\begin{proof} [Proof of  Theorem \ref{poiche}.] We divide the proof in two steps.

\smallskip\noindent {\it Step 1. } We first prove that, if \eqref{G1} and
\eqref{G2} hold for some $w$ with  $\ell:= |w|\in \{2,\dots,s-1\}$,
then the cancellation  formula \eqref{G3} must hold. Fix such a $w$
and start from the identity $\frac{d}{dt}u(C_t^{-1}C_t x)=0$.
\begin{equation}\begin{aligned}
0 & =\frac{d}{ds}u(C_s^{-1}C_t x)\Big|_{s=t}+\frac{d}{ds}(uC_t^{-1})(C_s x)\Big|_{s=t}
 \label{quazi}
\\&= -\ell t^{\ell-1} X_wu(x)   +\sum_{|v|=\ell+1}\wt a_v t^\ell X_vu(x) +
O_{\ell+2}(t^{\ell+1},u,x)
+
\ell t^{\ell-1} X_w(uC_t^{-1})(C_tx) 
\\&\quad \quad
 +\sum_{|v|=\ell+1} a_v t^\ell X_v(uC_t^{-1})(C_tx) +
O_{\ell+2}(t^{\ell+1},uC_t^{-1},C_tx).               \end{aligned}
\end{equation}
But, since $\ell\ge 2$,  formula  $F_{\ell,2}$ shows that
\[\begin{aligned}
 t^{\ell-1}\big\{X_w(uC_t^{-1})(C_tx)
 -X_wu(x) \big\}&=  t^{\ell-1}O_{2+|w|}(t^2,u,x)=O_{\ell+2}(t^{\ell+1},u,x),
 \end{aligned}
\]
while $F_{\ell,1}$ gives for any $v$ with $|v|=\ell+1$,
\[
 t^\ell\{X_v(uC_t^{-1})(C_tx)-X_v u(x)\}=t^\ell O_{1+|v|}(t,u,x)=
O_{\ell+2}(t^{\ell+1},u,x).
\]
Divide \eqref{quazi} by $t^\ell$ and let $t\to 0$ to get
\eqref{G3}. Step 1 is concluded.

\medskip\noindent{\it{Step 2.}  } We prove by an induction argument, that, if
Theorem \ref{poiche} holds for some $\ell\in \{2, \dots, s-1\} $, then it holds
for $\ell+1$. To show the result for $\ell=2$, it suffices to follow the proof below, taking into account that formulas \eqref{G1} and \eqref{G2} are trivial, if $\ell=1$.  We use the notation in \eqref{ghj} for $C_t$ and $C_t^0$.
In view of \eqref{iii} and of the   already accomplished  Step 1, it
suffices  to prove that
\begin{equation}\label{lsd}\begin{aligned}
\frac{d}{dt} u(C_t^0x) &=(\ell+1)t^{\ell}[X,X_w]u(C_t^0x)+O_{\ell+2}(t^{\ell+1},
u, C_t^0 x)\quad\text{and}
\\
\frac{d}{dt}
u((C_t^0)^{-1}x)
&=-(\ell+1)t^{\ell}[X,X_w]u((C_t^0)^{-1}x)+O_{\ell+2}(t^{\ell+1}, u,
(C_t^0)^{-1} x).
                           \end{aligned}
\end{equation}
We prove only the first line of  \eqref{lsd}.  The  latter  is similar.
We know that
\begin{equation}\label{derivatives}
\begin{split}
\frac{d}{dt}\left(u(C_tx)\right)=&\ell
t^{\ell-1}X_wu(C_tx)+t^\ell\sum_{|v|=\ell+1}
a_v X_vu(C_tx)+O_{\ell+2}(t^{\ell+1}, u, C_tx)\\
\frac{d}{dt}u(C^{-1}_tx) =
&-\ell t^{\ell-1}X_wu(C^{-1}_tx)+t^\ell\sum_{|v|=\ell+1}\wt
a_vX_vu(C^{-1}_tx)
\\&\quad +O_{\ell+2}(t^{\ell+1}, u, C_t^{-1} x),
\end{split}
\end{equation}
with  the remarkable cancellation \eqref{G3}. Observe that $a_v=\wt a_v=0$,  if $\ell=1$.
Next,
\[
\begin{split}
\frac{d}{dt}\left(u(C^0_tx)\right)=
&\frac{d}{dt}\left(u(C^{-1}_te^{-tX}C_te^{tX}x)\right)
\\
=
& X\left(uC^{-1}_te^{-tX}C_t\right)(e^{tX}x)
+
\frac{d}{ds}(uC_t^{-1} e^{-tX})(C_s e^{tX} x)\Big|_{s=t}
\\
&- X\left(uC^{-1}_t\right)(e^{-tX}C_te^{tX}x)
+
\frac{d}{ds}u(C_s^{-1} e^{-tX} C_t e^{tX} x)\Big|_{s=t}
\\
=: &A_1+A_2+A_3+A_4.
\end{split}
\]

First   we study $A_1+A_3$, by \eqref{skg} and
  $F_{\ell,\ell+1}$.
\[
\begin{split}
A_1+A_3 =& X\left(uC^{-1}_te^{-tX}C_t\right)(e^{tX}x)
-X\left(uC^{-1}_te^{-tX}\right)(C_te^{tX}x)
= \text{ (by $F_{\ell,\ell+1}$)}
\\
=& t^\ell [X,X_w](uC^{-1}_te^{-tX})(C_te^{tX}x)+O_{\ell+2}(t^{\ell+1}, uC_t^{-1}e^{tX}, C_t e^{tX}x)
\\= & t^\ell  [X,X_w] u(C^0_tx)+O_{\ell+2}(t^{\ell+1}, u, C_t^0 x).
\end{split}
\]

Next we study $A_2+A_4$, by means of \eqref{derivatives}.
\[
\begin{split}
A_2+A_4=&
\ell
t^{\ell-1}X_w(uC^{-1}_te^{-tX})(C_te^{tX}x)+t^\ell
\sum_{|v|=\ell+1}a_vX_v(uC^{-1} _te^ { -tX})(C_te^{tX}x)
\\
&- \ell t^{\ell-1}X_wu(C^0_tx)+t^\ell\sum_{|v|=\ell+1}\tilde a_vX_vu(C^0_tx)
+O_{\ell+2}(t^{\ell+1}, u, C_t^0 x)
\\
=&
\ell t^{\ell-1}\Big\{
X_w(uC^{-1}_t)(e^{-tX}C_te^{tX}x)+t[X,X_w](uC^{-1}_t)(e^{-tX}C_te^{tX}x)
\\&\qquad \qquad +O_{\ell+2}
(t^2, u ,  C_t^0 x)
\Big\}
\\
&+t^\ell\sum_{|v|=\ell+1}a_v\big\{ X_v u(C^0_tx) + O_{\ell+2}(t,u,C_t^0x)\big\}
\\
&- \ell t^{\ell-1}X_wu(C^0_tx)+t^\ell\sum_{|v|=\ell+1}\tilde a_vX_vu(C^0_tx)
+O_{\ell+2}(t^{\ell+1}, u, C_t^0 x).
\end{split}
\]
Now observe that by formula $F_{\ell,2}$ we have, if $\ell\ge 2$,
\[
 \begin{aligned}
  t^{\ell -1}&\Big\{X_w(uC_t^{-1})(e^{-tX}C_t e^{tX}x)-X_wu(C_t^0x) \Big\}
 =O_{\ell+2} (t^{\ell+1}, u, C_t^0 x),
\end{aligned}
\]
while, if $\ell=1$ the left-hand side vanishes identically. Thus,     cancellation \eqref{G3} gives
$A_2+A_4 =
\ell t^{\ell}[X,X_w]u(C^0_tx) +O_{\ell+2}(t^{\ell+1}, u, C_t^0 x)
$ and ultimately
$
A_1+A_2+A_3+A_4=(\ell+1)t^{\ell}[X,X_w]u(C^0_tx)+O_{\ell+2}(t^{\ell+1},u, C_t^0
x).
$ The proof is concluded.
\end{proof}

  \section{Persistence of maximality conditions on balls}\setcounter{equation}{0}
\label{jkt}
Here we establish a key property of stability of the $\eta-$maximality condition.  The argument, as in \cite{TW},
is based on Gronwall's   inequality.
 \label{duesette}
\begin{theorem}\label{dapro}
Let $X_1,\dots, X_m$ be vector fields  in $\A_s$.
Then, there are   $r_0>0$ and  $\e_0>0$ depending on the constants $L$  and
$\nu$
in \eqref{horma} and \eqref{lipo} such that, if for some $\eta\in
\left]0,1\right[,$ $x\in K$ and $r<r_0$, the triple
$(\I,x,r) $ is  $\eta-$maximal,
 then for any $y\in B(x,\eta \e_0
r)$, we have the estimates
\begin{equation}\label{aceto}
|\lambda_\I(y)-\lambda_\I(x)|\le \frac{1}{2}|\lambda_\I(x)|,
\end{equation}
\begin{equation}\label{tredici}
|  \lambda_\I(y)|r^{\ell(\I)}> C^{-1}\eta \Lambda(y,r).
 \end{equation}
 \end{theorem}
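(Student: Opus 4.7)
The plan is to follow the Tao-Wright style Gronwall argument indicated just before the theorem, combined with the mollification machinery of Proposition \ref{schiaccia2} to handle the limited regularity. Since $y\in B(x,\eta\epsilon_0 r)$, there exist $b\in L^\infty([0,1];\R^m)$ with $\|b_j\|_\infty\le \eta\epsilon_0 r$ and a Lipschitz path $\gamma$ joining $x$ to $y$ which solves $\dot\gamma=\sum_j b_jX_j(\gamma)$. By Proposition \ref{schiaccia2}, the corresponding trajectories $\gamma^{\s}$ of the mollified fields $X_j^{\s}$ converge uniformly to $\gamma$, and $\lambda_\J^{\s}\to\lambda_\J$ uniformly on $K$ for every $\J\in\ii$. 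It is therefore enough to prove both estimates for the mollified fields with constants independent of $\s$, and then to pass to the limit; I drop the superscript $\s$ in the sequel.

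The engine of the argument is the following pointwise bound: there is a constant $C$ depending only on $L$ (and on the combinatorial constants $n,m,s$) such that
\[
|X_j\lambda_\J(z)|\le C\,\Lambda(z,r)\,r^{-\ell(\J)-1}\qquad \text{for every }\J\in\ii,\ z\in\Omega,\ j\in\{1,\dots,m\}.
\]
To establish it, expand by multilinearity
\[
X_j\lambda_\J(z)=\sum_{i=1}^n\det\bigl(f_{k_1},\dots,X_jf_{k_i},\dots,f_{k_n}\bigr)(z),\qquad \J=(k_1,\dots,k_n),
\]
and use the identity $X_jf_{k_i}=[X_j,Y_{k_i}]+Y_{k_i}f_j$ between vector-valued functions. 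When $\ell_{k_i}+1\le s$, the commutator $[X_j,Y_{k_i}]$ is one of the $Y_\mu$'s, and the associated determinant equals $\pm\lambda_{\J'}$ with $\ell(\J')=\ell(\J)+1$; by the very definition of $\Lambda$, $|\lambda_{\J'}|\le\Lambda(z,r)r^{-\ell(\J)-1}$. The extra term $Y_{k_i}f_j$ is developed recursively through the same identity, producing a finite decomposition $\sum_\mu c_\mu(z)Y_\mu(z)$ over $\mu$ with $\ell_\mu\le\ell_{k_i}+1$ and coefficients $c_\mu$ bounded in terms of $L$; each resulting determinant is again dominated by $\Lambda(z,r)r^{-\ell(\J)-1}$. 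The delicate case $\ell_{k_i}=s$ is precisely where condition (2) of Definition \ref{aesse} is needed: it guarantees that $X_jX_kf_w$ is uniformly bounded for $|w|=s-1$, so that the corresponding terms admit $\s$-independent bounds via Proposition \ref{schiaccia2}.

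Granted the pointwise estimate, the rest is a standard bootstrap. From $|b_j|\le\eta\epsilon_0 r$,
\[
\left|\frac{d}{dt}\bigl(|\lambda_\J(\gamma(t))|r^{\ell(\J)}\bigr)\right|\le m\eta\epsilon_0 r\cdot C\Lambda(\gamma(t),r)r^{-1}=C'\eta\epsilon_0\,\Lambda(\gamma(t),r);
\]
taking the supremum over $\J\in\ii$ and applying Gronwall's inequality yield $\Lambda(\gamma(t),r)\le 2\Lambda(x,r)$ for all $t\in[0,1]$, provided $\epsilon_0$ is a sufficiently small constant depending only on $L,m,n,s$; symmetry then gives $\Lambda(x,r)\le 2\Lambda(y,r)$. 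Applying the pointwise estimate with $\J=\I$ and using $\eta$-maximality at $x$ to bound $\Lambda(\gamma(t),r)\le 2\eta^{-1}|\lambda_\I(x)|r^{\ell(\I)}$, we obtain $|\tfrac{d}{dt}\lambda_\I(\gamma)|\le C''\epsilon_0\,|\lambda_\I(x)|$, which after integration on $[0,1]$ produces \eqref{aceto} as soon as $\epsilon_0\le 1/(2C'')$. Finally \eqref{tredici} follows from the chain
\[
|\lambda_\I(y)|r^{\ell(\I)}\ge\tfrac12|\lambda_\I(x)|r^{\ell(\I)}>\tfrac{\eta}{2}\Lambda(x,r)\ge\tfrac{\eta}{4}\Lambda(y,r).
\]

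The main technical hurdle is the decomposition of $X_jf_{k_i}$ as a combination of the $Y_\mu$'s with bounded coefficients, particularly in the top-length case $\ell_{k_i}=s$, where $[X_j,Y_{k_i}]$ lies outside the family of commutators of length $\le s$ and only the class $\A_s$ keeps it bounded uniformly under the mollification procedure of Proposition \ref{schiaccia2}.
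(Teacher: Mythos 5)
Your overall strategy is exactly the paper's: pass to the mollified fields $X_j^\sigma$ and argue that the maximality is stable under mollification, prove a pointwise differential inequality $|X_j\lambda_\J|\le C\,\Lambda(\cdot,r)\,r^{-\ell(\J)-1}$, run Gronwall along a subunit path, and then deduce \eqref{aceto} and \eqref{tredici}. Your handling of the top-length columns $\ell_{k_i}=s$ via Lemma~\ref{rough} and condition~(2) of Definition~\ref{aesse} is also the paper's argument, and the final Gronwall bootstrap and the chain of inequalities giving \eqref{tredici} are correct (modulo replacing the non-differentiable maximum $\Lambda$ by the equivalent smooth quantity $\Lambda_2(x,r)=\bigl(\sum_\I(\lambda_\I(x)r^{\ell(\I)})^2\bigr)^{1/2}$ before differentiating, as the paper does).

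However, there is a genuine gap in your derivation of the pointwise estimate, in the step where you dispose of the non-commutator contribution $Y_{k_i}f_j$. You claim that $Y_{k_i}f_j$ ``is developed recursively through the same identity, producing a finite decomposition $\sum_\mu c_\mu(z)Y_\mu(z)$ with $\ell_\mu\le\ell_{k_i}+1$ and bounded coefficients.'' No such decomposition exists in general: $Y_{k_i}f_j$ is just the directional derivative of the coefficient vector of $X_j$, a generic bounded vector in $\R^n$; writing it as a combination of the $Y_\mu$'s forces you to use Lemma~\ref{rough}, which produces commutators $Y_\mu$ of \emph{all} lengths up to $s$, not up to $\ell_{k_i}+1$. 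If a $Y_\mu$ of length $s$ appears in column $i$ with $\ell_{k_i}$ small, the corresponding determinant $\lambda_{\J'}$ has $\ell(\J')=\ell(\J)-\ell_{k_i}+\ell_\mu$, which can be as large as $\ell(\J)+s-1$, and then $|\lambda_{\J'}|\le\Lambda\,r^{-\ell(\J)-s+1}$, far worse than the needed $\Lambda\,r^{-\ell(\J)-1}$. The correct way to treat these terms is not column by column but globally: by the standard linear-algebra identity $\sum_k\det(\dots,Mv_k,\dots)=(\operatorname{tr} M)\det(v_1,\dots,v_n)$ applied with $M=Df_j$, the \emph{whole} sum $\sum_i\det(\dots,Y_{k_i}f_j,\dots)$ collapses to $(\operatorname{div}X_j)\,\lambda_\J$, which is trivially bounded by $C|\lambda_\J|\le C\Lambda r^{-\ell(\J)}$. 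This is the $(\operatorname{div}X)\lambda_\J$ term in the formula from \cite[Lemma~2.6]{NSW} that the paper invokes, and it is not optional: without it the differential inequality fails and the Gronwall argument does not close. Once you replace the ``recursive decomposition'' by this identity, the rest of your proof goes through.
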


To prove Theorem \ref{dapro}  we need the following easy lemma.

\begin{lemma}\label{rough}
There is $C>0$ depending on $L$ and $\nu$ such that, given $y\in
\Omega $ and $z\in\R^n$, the linear system
$
 \sum_{i=1}^q {Y_i(y)}\xi^i = z
$
 has a solution $\xi\in\R^q$ such that
$
 |\xi|\le C |z|
$.
\end{lemma}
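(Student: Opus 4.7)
The plan is to use the Hörmander condition through the lower bound \eqref{horma} on $\Lambda$ to extract an $n$-tuple of commutators that form a well-conditioned basis of $\R^n$, and then invert the resulting $n\times n$ linear system by Cramer's rule.

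In more detail, I would first invoke \eqref{horma}, which gives $\Lambda(y,1)=\max_{K\in\ii}|\lambda_K(y)|\ge \nu$ for every $y\in\Omega$. Hence there exists $K=(k_1,\dots,k_n)\in\ii$, possibly depending on $y$, such that
\[
|\lambda_K(y)|=|\det(Y_{k_1}(y),\dots,Y_{k_n}(y))|\ge \nu.
\]
In particular the vectors $Y_{k_1}(y),\dots,Y_{k_n}(y)$ form a basis of $\R^n$, so the reduced linear system
\[
\sum_{j=1}^n Y_{k_j}(y)\,\eta^j=z
\]
admits a unique solution $\eta\in\R^n$.

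Next I would estimate $|\eta|$ via Cramer's rule: for each $j$,
\[
\eta^j=\frac{\det\bigl(Y_{k_1}(y),\dots,z,\dots,Y_{k_n}(y)\bigr)}{\lambda_K(y)},
\]
where $z$ replaces the $j$-th column. Expanding the numerator along the column containing $z$ gives $|\eta^j|\le n\,M^{n-1}|z|/|\lambda_K(y)|$, where $M$ is a pointwise bound on $|Y_i(y)|$ for $i=1,\dots,q$ and $y\in\Omega$. Since every commutator $Y_i$ of length at most $s$ has coefficients which are polynomial expressions in the $f_j$'s and their Euclidean derivatives of order at most $s-1$, the constant $L$ in \eqref{lipo} controls $M$ by a constant depending only on $L$ (and the absolute parameters $m,n,s$). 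Combined with $|\lambda_K(y)|\ge\nu$, this yields $|\eta^j|\le C|z|$ with $C=C(L,\nu)$.

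Finally I would set $\xi^{k_j}:=\eta^j$ for $j=1,\dots,n$ and $\xi^i:=0$ for the remaining indices $i\in\{1,\dots,q\}\setminus\{k_1,\dots,k_n\}$. This defines a $\xi\in\R^q$ solving the original system with $|\xi|\le \sqrt{n}\max_j|\eta^j|\le C|z|$, as required. No step is really an obstacle here; the only subtlety is checking that the pointwise bound on $|Y_i(y)|$ is controlled by $L$ (and not by any further unstated quantity), which is immediate from \eqref{lipo} since the definition of $L$ includes all Euclidean derivatives of the $f_j$'s up to order $s-1$.
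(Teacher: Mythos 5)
Your proposal is correct and follows essentially the same approach as the paper: both select an $n$-tuple $(k_1,\dots,k_n)\in\ii$ with $|\lambda_K(y)|\ge\nu$ via \eqref{horma}, invert the resulting $n\times n$ system (you via Cramer's rule, the paper via the cofactor bound $|A^{-1}|\le C/|\det A|$, which is the same computation), and extend by zero to the remaining coordinates of $\xi$.
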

\begin{proof}    Take       $y\in
\Omega$ and choose $(k_1,\dots, k_n)\in\ii$ such that
$\det(Y_{k_1}(y),\dots ,  Y_{k_n}(y))\ge \nu$.  Let
$
A := (Y_{k_1}(y),\dots, Y_{k_n}(y))
$.  Thus,
$
 \abs{A^{-1}}\le  C / |\det(A)|  \le C/\nu,
$ where $C$ depends on $L$.
The lemma is easily proved by studying the system
$A\xi=z
$ with $\xi= (\xi_{k_1}, \dots, \xi_{k_n})\in\R^n$. \end{proof}

\begin{proof}[Proof of Theorem \ref{dapro}]
Observe that if $(I, x, r)$ is $\eta$-maximal, then
 there is $\wt\sigma>0$ which may also depend  on $I, x, r$,
  such that $(I, x, r)$ is $\eta$-maximal for the mollified $X_j^\s$ for all $\s\le \wt \sigma$.
Therefore, we will give the proof for smooth vector fields (without writing any superscript).
The nonsmooth case will  follow by  passing to the limit as $\sigma\to 0$ and taking into account that all
constants are stable.

  Let $J\in \ii $ and let
$ \lambda_J(x): = \det[Y_{j_1} (x), \dots, Y_{{j_n}} (x)] $.
Let $X$ be a vector field of length one.
Recall the following formula  (see \cite[Lemma 2.6]{NSW}):
\begin{equation*}
 \begin{aligned}
X\lambda_\J &
=   (\div X) \lambda_\J
   +\sum_{k=1}^n \det( \dots,Y_{j_{k-1}},
  [X, Y_{j_k}], Y_{j_{k+1}}, \dots )
 \\&
=  (\div X) \lambda_\J
   +\sum_{k\le n ,\;     \ell_{j_k}<s} \det( \dots,Y_{j_{k-1}},
  [X, Y_{j_k}], Y_{j_{k+1}}, \dots )
  \\ &\qquad  +\sum_{k\le n ,\;\ell_{j_k}=s} \det( \dots,Y_{j_{k-1}},
  [X, Y_{j_k}], Y_{j_{k+1}}, \dots )
  \\&
  =:(A) +\sum_{k\le n ,\;     \ell_{j_k}<s}  (B)_k
+\sum_{k\le n ,\;\ell_{j_k}=s}(C)_k   .
 \end{aligned}
\end{equation*}
We claim that
\begin{equation}\label{abcpp}
r^{\ell(\J)}\, |X\lambda_J(y)|\le \frac{C}{r}\Lambda(y,r)   \qquad \text{for all}
 \quad y\in \Omega  \quad \J\in \ii \quad r\le r_0.
\end{equation}

To prove \eqref{abcpp},
 observe first that, if $y\in\Omega$, then
\[|(A) (y)|\le C   |\lambda_\J (y)|
   \le C r^{-\ell(\J)}  \Lambda(y,r)
,\] by definition of $\Lambda$. This  gives immediately the correct estimate for $(A)$.
Next we look at $(B)_k $.  Since $\ell(Y_{j_k})\le s-1  $,
we get for $y\in\Omega$
\[\begin{aligned}
|(B)_k (y)| & \le   \big|\det\big(  \dots, Y_{j_{k-1}}, [X, Y_{j_k}], Y_{j_{k+1}},\dots\big) (y)\big|
\le C\Lambda(y,r)\, r^{-\ell(\J)-1}  .
\end{aligned}\]
Finally we consider   $(C)_k $. In view of   Lemma \ref{rough}, we may  write
$[X, Y_{j_k} ](y)=\sum_{i=1}^q\xi_{j_k,i}  Y_i(y)$, where $|\xi_{j_k,i} |\le C\big|[X, Y_{j_k}](y)\big| \le C$
for any $i=1, \dots, q$.
Therefore,
\[
 \begin{aligned}
|(C)_k(y)| &
=
\Big| \sum_{i=1}^q \xi_{  j_k,i}
\det\big(  \dots, Y_{j_{k-1}}, Y_i, Y_{j_{k+1}},\dots\big) (y)\Big|
\\&\le
C\sum_{i=1}^q
\Big| \det\big(  \dots, Y_{j_{k-1}}, Y_i, Y_{j_{k+1}},\dots\big) (y)\Big|
\\&
\le C  \sum_{i=1}^q  r^{-\ell(\J)+ \ell_{j_k}-\ell_i}\Lambda(y, r)
\le   C\Lambda(y, r)r^{-\ell(\J)-1},
\end{aligned}
\]
because  $\ell_{j_k}=s\ge \ell_i $, for any $i=1,\dots,q$. This finishes the proof of \eqref{abcpp}.

Let  $\gamma:[0, r]\to \R^n$
 be a subunit path with $\gamma(0)=x$,
$\gamma(r)=y$. Assume that $x\in K$ and $r$ is small enough to ensure that $B(x, r)\subset\Omega $.
 Then, by \eqref{abcpp}
\begin{equation}\label{zoppos}
\begin{aligned}
r^{\ell(\J)}\, | \lambda_{\J} (y) - \lambda_{\J} (x)| & \le \frac{C}{r}\int_0^r \Lambda(\g(s), r)ds.
\end{aligned}
\end{equation}

To have differentiability, define
$
 \Lambda_2(x, r):=\Big\{\sum_{\I}\Big( \lambda_\I(x) \,r^{\ell(I)}\Big)^2\Big\}^{1/2},
$
which is equivalent to $\Lambda(x,r)$, through absolute constants.
Therefore, \eqref{abcpp} gives
\begin{equation*}
\begin{aligned}
 \left|\frac{d}{d s} \Lambda_2  ( \gamma(s),r)\right|
 & =\Big|
 \frac{1}{\L_2 (\gamma(s),r)}
\sum_{\J} r^{2\ell(\J)}\lambda_\J (\gamma(s))
 \p_s\lambda_{\J} (\g(s))\Big|   \le  \frac{C}{r}\Lambda_2(\gamma(s), r) .
\end{aligned}
\end{equation*}
 Integrating the inequality we get
\begin{equation}\label{backspace}
\big|\L_{2}(x,r)- \L_{2}(\gamma(s),r)\big|\le \L_{2}(x,r)
  \Big(\exp\Big( \frac{C}{r}s \Big) -1 \Big). \end{equation}
Moreover, integrating \eqref{zoppos} for $\J=\I$,  we get for $0\le s\le r$,
\begin{equation}\label{nikon}
\begin{aligned}
 \big| r^{\ell(\I)}\lambda_\I(\gamma(s))
  -  r^{\ell(\I)}\lambda_\I(x)\big|
&\le \frac{C}{r}\int_0^s\Lambda_2(\gamma(\t),r)d\t
\\&\le \frac{C}{r} \int_0^s\Lambda_2(x,r)e^{C\tau/r}d\tau =\Lambda_2(x,r) \big(e^{C s/r} -1\big)
\\&\le \frac{Cs}{r}\Lambda(x,r)\le \frac{Cs}{r\eta}r^{\ell(\I)} \,|\lambda_\I(x)|,\end{aligned}
\end{equation}
because $(I,x,r)$ is $\eta$-maximal.
Then \eqref{aceto} and \eqref{tredici} follow from \eqref{nikon} and \eqref{backspace}.
\end{proof}

At this point we can prove the following statement.
\begin{corollary}
 \label{765}Assume that  $(\I,x,r)$ is $\eta-$maximal for the vector fields $X_1, \dots, X_m$ in $\A_s$ and  for some $x\in K$ and  $r\le r_0$.
Then for any $y\in B(x,\e_0 \eta  r)$, $i=1,\dots,q$,  we may   write $Y_j(y)=\sum_{k=1}^n a_j^k Y_{i_k}(y)$, where
 $
  |a_j^k|\le \frac{C}{\eta}  r^{\ell_{i_k} -\ell_j}.
 $
\end{corollary}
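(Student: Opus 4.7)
The plan is to use Cramer's rule together with Theorem \ref{dapro}, which has already been established on the ball $B(x,\e_0\eta r)$.

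First, I would note that by \eqref{tredici} in Theorem \ref{dapro}, for every $y\in B(x,\e_0\eta r)$ we have
\[
|\lambda_\I(y)|\,r^{\ell(\I)}>C^{-1}\eta\,\Lambda(y,r),
\]
so in particular $\lambda_\I(y)\ne 0$ and the vectors $Y_{i_1}(y),\dots,Y_{i_n}(y)$ form a basis of $\R^n$. Hence for each $j=1,\dots,q$ there are unique coefficients $a_j^1,\dots,a_j^n$ with $Y_j(y)=\sum_{k=1}^n a_j^k Y_{i_k}(y)$.

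Next I would apply Cramer's rule: if $\K=(i_1,\dots,i_{k-1},j,i_{k+1},\dots,i_n)$ denotes the multi-index obtained from $\I$ by replacing the $k$-th entry by $j$, then
\[
a_j^k=\frac{\det(Y_{i_1},\dots,Y_{i_{k-1}},Y_j,Y_{i_{k+1}},\dots,Y_{i_n})(y)}{\lambda_\I(y)}=\frac{\lambda_\K(y)}{\lambda_\I(y)}.
\]
Observe that $\ell(\K)=\ell(\I)-\ell_{i_k}+\ell_j$.

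The key step is now to combine the trivial upper bound $|\lambda_\K(y)|\le \Lambda(y,r)\,r^{-\ell(\K)}$ (from the definition of $\Lambda$) with the lower bound supplied by \eqref{tredici}:
\[
|a_j^k|=\frac{|\lambda_\K(y)|}{|\lambda_\I(y)|}\le \frac{\Lambda(y,r)\,r^{-\ell(\K)}}{C^{-1}\eta\,\Lambda(y,r)\,r^{-\ell(\I)}}=\frac{C}{\eta}\,r^{\ell(\I)-\ell(\K)}=\frac{C}{\eta}\,r^{\ell_{i_k}-\ell_j},
\]
which is exactly the required estimate. There is no real obstacle here: the heavy lifting has already been done in Theorem \ref{dapro}, which guarantees that the denominator $\lambda_\I(y)$ stays comparable to $\eta\,\Lambda(y,r)\,r^{-\ell(\I)}$ throughout the ball $B(x,\e_0\eta r)$, while the numerator is automatically controlled by $\Lambda(y,r)\,r^{-\ell(\K)}$ for every competitor $\K$.
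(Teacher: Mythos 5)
Your proof is correct and is essentially the same as the paper's: the paper likewise applies Cramer's rule to obtain $a_j^k=\lambda_\K(y)/\lambda_\I(y)$ and then bounds the quotient by \eqref{tredici}, observing that $\ell(\I)-\ell(\K)=\ell_{i_k}-\ell_j$. You have merely spelled out the intermediate step (the trivial bound $|\lambda_\K(y)|\,r^{\ell(\K)}\le\Lambda(y,r)$) that the paper leaves implicit.
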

\begin{proof} Write $Y_i$ instead of $Y_i(y)$. Look at the linear system $Y_j=\sum_{k=1}^n a_j^k Y_{i_k}. $ The Cramer's rule furnishes
\[
 \begin{aligned}
  a_j^k&= \frac{\det[Y_{i_1}, \dots, Y_{i_{k-1}}, Y_j, Y_{i_{k+1}}, \dots, Y_{i_n} ]}{\det[Y_{i_1}, \dots,
  Y_{i_{k-1}}, Y_{i_k},
  Y_{i_{k+1}}, \dots, Y_{i_n} ]}
  \le \frac{C}{\eta} r^{\ell_{i_k}-\ell_j},
 \end{aligned}
\]
by \eqref{tredici},  and the proof is concluded. \end{proof}

\section{Ball-box theorem  }\label{uno4}
\subsection{Derivatives of  almost exponential maps }
\setcounter{equation}{0}\label{unotre} Here we take H\"ormander
vector fields  $X_1,\dots, X_m$  in $\A_s$.    When we choose an
$n-$tuple $\I=(i_1, \dots, i_n)\in\ii$ and the $n-$tuple is
understood, we write  $Y_{i_j} =U_j  $ and $\ell(Y_{i_j}) =\ell(U_j)
=d_j$, for $j=1,\dots,n$. Our first result is:

\begin{theorem}\label{ema3}  There are $
\s_0, r_0,\s_0$ and $C>0$
  such that, given $\I\in\ii$,   then, for any $j=1, \dots, n$, $\s\le\s_0$,
$x\in K$ and   $h\in Q_\I(r_0)$,
the $C^1$ map $E_{I,x}^\s$ satisfies
\begin{equation}\label{dcd3}
\begin{aligned}
 \frac{\p}{\p h_j} E^\s_{\I,x}(h) & = U_j^\s  (E^\s_{\I,x}(h)) +
 \sum_{|w|=d_j +1}^{s}a^w_j(h)X_{w}^\s (E^\s_{\I,x}(h)) + \omega^\s_j(x,h),
\end{aligned}
\end{equation}
where the sum is empty if $d_j=\ell(U_j)=s$ and the following estimates hold:
\begin{equation}\label{wiz3}
 |\o_j^\s(x, h)|\le C \|h\|_\I^{s+1-d_j } \qquad\text{for any $x\in K$}\quad   h\in Q_\I(r_0)\quad \s\le\s_0,
\end{equation}
\begin{equation}
 \label{pso3}
|a^w_j(h)|\le C \|h\|_\I^{|w|-d_j } \qquad\text{for all
$  h\in Q_\I(r_0)$} \qquad   |w|=d_j+1,\dots,s.
\end{equation}
 \end{theorem}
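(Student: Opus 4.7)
The plan is to reduce the partial derivative of $E^\s_{\I,x}$ to a single approximate-exponential derivative (handled by Theorem \ref{dididi}) and then transport every vector field that appears from an interior point of the composition back to the endpoint $E^\s_{\I,x}(h)$ using the commutation formulas of Theorem \ref{grossetto}. Since the statement is for the smooth mollifications $X_j^\s$, every tool of Section \ref{jtk} applies directly, so I drop the superscript $\s$ in the sketch.

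\emph{Step 1: Chain rule.} Fix $j$ and split
$E(h) = u\bigl(\exp_*(h_j U_j)\, y\bigr)$
where $u := \exp_*(h_1 U_1)\cdots\exp_*(h_{j-1}U_{j-1})$ and $y := \exp_*(h_{j+1}U_{j+1})\cdots\exp_*(h_n U_n)(x)$. Since only the $j$-th factor depends on $h_j$, applying Theorem \ref{dididi} with $u$ as the test function gives
\[
\frac{\p}{\p h_j} E(h) = U_j u(\tilde p) + \sum_{|w|=d_j+1}^{s} \a_w(h_j)\, X_w u(\tilde p) + O_{s+1}\!\left(|h_j|^{(s+1-d_j)/d_j}, u, \tilde p\right),
\]
where $\tilde p := \exp_*(h_j U_j)\,y$ and $|\a_w(h_j)| \le C|h_j|^{(|w|-d_j)/d_j} \le C\|h\|_\I^{|w|-d_j}$. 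The remainder is $O(\|h\|_\I^{s+1-d_j})$ by \eqref{prie}.

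\emph{Step 2: Commutation to the endpoint.} Each term has the form $Zu(\tilde p)$ with $Z$ a commutator of length $\ell(Z) \le s$, and this is a vector at $u(\tilde p) = E(h)$ obtained by push-forward through $u$. To express it directly in terms of vector fields at $E(h)$, I iteratively move $Z$ past $\phi_{j-1},\phi_{j-2},\dots,\phi_1$. Each $\phi_k = C_{t_k}^{\pm1}(X_{v_1},\dots,X_{v_{d_k}})$ with $t_k = |h_k|^{1/d_k}$, and applying formula $F_{d_k,\, s+1-\ell(Z)}$ of Theorem \ref{grossetto} yields
\[
Z(u'\phi_k)(q) = Zu'(\phi_k q) + \sum_{|v|=\ell(Z)+d_k+1}^{s} t_k^{|v|-\ell(Z)} c_v\, X_v u'(\phi_k q) + O_{s+1}\!\left(t_k^{s+1-\ell(Z)}, u', \phi_k q\right),
\]
together with the main commutator term $t_k^{d_k}[Z,U_k]u'(\phi_k q)$ when $\ell(Z)+d_k+1 \le s$. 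Crucially, $t_k^{|v|-\ell(Z)} = |h_k|^{(|v|-\ell(Z))/d_k} \le \|h\|_\I^{|v|-\ell(Z)}$ and $t_k^{s+1-\ell(Z)} \le \|h\|_\I^{s+1-\ell(Z)}$, so each new term carries a coefficient whose size is exactly matched to the length jump. Iterating through all $\phi_k$ (with $k=j-1,j-2,\dots,1$), using \eqref{iii} to reorganize newly-generated higher commutators, and using \eqref{laprop} and \eqref{prie} to bound remainders, I obtain the expansion
\[
Zu(\tilde p) = Z(E(h)) + \sum_{|v|=\ell(Z)+1}^{s} b^v_Z(h)\, X_v(E(h)) + R_Z(x,h),
\]
with $|b^v_Z(h)| \le C\|h\|_\I^{|v|-\ell(Z)}$ and $|R_Z(x,h)| \le C\|h\|_\I^{s+1-\ell(Z)}$.

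\emph{Step 3: Collecting, and main obstacle.} Feeding Step 2 into Step 1 with $Z = U_j$ (yielding the leading term $U_j(E(h))$ plus commutators of length $d_j+1,\dots,s$) and with $Z = X_w$ for each $|w| \in \{d_j+1,\dots,s\}$ (where only finitely many orders of expansion are required, since the prefactor $\a_w(h_j)$ already contributes a factor $\|h\|_\I^{|w|-d_j}$, so the induced new commutators need only be tracked up to total weight $s+1-d_j$), relabeling by word, and absorbing every remainder into $\omega^\s_j(x,h)$ gives \eqref{dcd3} with bounds \eqref{pso3} and \eqref{wiz3}. The main difficulty is not conceptual but combinatorial: at every one of the finitely many commutations one must verify that the power of $t_k$ produced by Theorem \ref{grossetto} exactly matches (via $t_k^{d_k}=|h_k|\le \|h\|_\I^{d_k}$) the jump in commutator length, so that the two indexings $(\|h\|_\I^{|v|-d_j}\text{ for coefficients}, \|h\|_\I^{s+1-d_j}\text{ for the remainder})$ propagate unchanged through the entire cascade. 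This homogeneity balance — enforced by \eqref{laprop} and the structure of the $O_p$ formalism — is the reason the final formula \eqref{dcd3} assumes a shape independent of $j$ and of the particular word of $U_j$.
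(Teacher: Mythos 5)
Your proposal is correct and follows essentially the same strategy as the paper's own proof: fix $j$, isolate the $j$-th factor, apply Theorem \ref{dididi} to obtain the leading $U_j$-term plus higher-order commutators and an integral remainder (Step 1), then transport every resulting vector field past the prefix $\exp_*(h_1U_1)\cdots\exp_*(h_{j-1}U_{j-1})$ back to the endpoint $E(h)$, tracking how the coefficient in $\|h\|_\I$ grows in step with the commutator length (Steps 2--3). The only presentational difference is that the paper (working WLOG in $\R^2$ with a single prefix factor) factorizes $\exp_*(h_1U_1)$ all the way into elementary one-parameter exponentials and iterates formula \eqref{skg}, producing explicit $\ad^{\a_1}_{Z_1}\cdots\ad^{\a_\nu}_{Z_\nu}$ coefficients, whereas you commute past each whole approximate exponential $\phi_k=C_{t_k}^{\pm1}$ at once using the packaged formulas $F_{d_k,r}$ of Theorem \ref{grossetto}; since those $F$-formulas are derived precisely by iterating \eqref{skg}, the two computations are the same in substance, and the crucial homogeneity bookkeeping $t_k^{|v|-\ell(Z)}\le\|h\|_\I^{|v|-\ell(Z)}$, $t_k^{s+1-\ell(Z)}\le\|h\|_\I^{s+1-\ell(Z)}$ is exactly what the paper also relies on.
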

  Theorem \ref{ema3} holds without assuming $\eta$-maximality.
If the  triple $(\I,x,r)$  is $\eta-$maximal, we have more.
 To state the result, fix once for all  a dimensional constant $\chi >0$ such that
\begin{equation}\label{aduo}
 \det(I_n+A)\in\Big[ \frac 12,2\Big]
 \qquad  \text{for all } A\in \R^{n\times n} \quad \text{with  norm }\abs{A}\le
\chi  .
\end{equation}

   \begin{theorem}
\label{papero3}
Let $r_0,\s_0>0$ as in Theorem \ref{ema3}. Given an $\eta-$maximal triple
$(\I,x,r)$ for the vector fields $ X_i   $,
with $x\in K$,  $r<r_0$ and $\s\le\s_0$, then, for any $h\in Q_\I(\e_0\eta   r)$, $j=1,\dots,n$, we may write
 \begin{equation}
\begin{aligned}\frac{\p}{\p h_j} E^\s_{\I,x}  ( h) & =  U^\s_j   (E^\s_{\I,x}  ( h)) +
 \sum_{k=1}^n  (b_{j}^k)^\s(x,h)U^\s_k(E^\s_{\I,x}  ( h)),
\end{aligned}
\label{gatto}
\end{equation}
where,
\begin{equation}\label{esso}
 |(b_j^k)^\s(x,h)|\le \frac{C}{\eta} \,\frac{\|h\|_\I}{r} r^{d_k-d_j }
 \le\chi r^{d_k-d_j}\quad\text{for all } \;  h\in Q_\I(\eta\e_0    r).
\end{equation}
\end{theorem}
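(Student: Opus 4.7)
The plan is to start from the expansion \eqref{dcd3} of $\partial_{h_j} E^\sigma_{\I,x}$ provided by Theorem \ref{ema3}, and then to re-express every higher-order commutator $X_w^\sigma$ and the integral remainder $\omega_j^\sigma$ in the basis $\{U_1^\sigma(y),\dots,U_n^\sigma(y)\}$ evaluated at $y:=E^\sigma_{\I,x}(h)$. The key ingredient is that the $\eta$-maximality of $(\I,x,r)$ transfers to $(\I,y,r)$ via Theorem \ref{dapro}, which then makes Corollary \ref{765} available at $y$.

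First I would verify that $y\in B(x,\e_0'\eta r)$, where $\e_0'$ is the constant of Theorem \ref{dapro}. Since $\exp_*(h_i U_i^\sigma)$ is, by construction, a composition of flows along $X_k^\sigma$ with parameters of length at most $|h_i|^{1/d_i}$, the control distance from $x$ to $y$ is at most $Cn\|h\|_\I\le Cn\e_0\eta r$; choosing the absolute constant $\e_0$ in the statement so that $Cn\e_0\le \e_0'$ places $y$ in the desired ball, and Theorem \ref{dapro} gives that $(\I,y,r)$ is $(C^{-1}\eta)$-maximal. Corollary \ref{765} then produces, for every $i=1,\dots,q$, a decomposition $Y_i^\sigma(y)=\sum_{k=1}^n\alpha_i^k U_k^\sigma(y)$ with $|\alpha_i^k|\le (C/\eta)\,r^{d_k-\ell_i}$; specializing to each $X_w^\sigma$ with $d_j+1\le |w|\le s$ appearing in \eqref{dcd3} yields $X_w^\sigma(y)=\sum_k\alpha_w^k U_k^\sigma(y)$ with $|\alpha_w^k|\le (C/\eta)\,r^{d_k-|w|}$.

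For the remainder, $\omega_j^\sigma(x,h)$ is merely a vector in $\R^n$ of Euclidean norm $\le C\|h\|_\I^{s+1-d_j}$ by \eqref{wiz3}. I would first apply Lemma \ref{rough} at $y$ to write $\omega_j^\sigma=\sum_{i=1}^q\xi_i Y_i^\sigma(y)$ with $|\xi_i|\le C|\omega_j^\sigma|\le C\|h\|_\I^{s+1-d_j}$, and then compose with the decompositions above to obtain $\omega_j^\sigma=\sum_k\gamma^k U_k^\sigma(y)$ with $|\gamma^k|\le (C/\eta)\,\|h\|_\I^{s+1-d_j}\,r^{d_k-s}$ (using $\ell_i\le s$ and $r\le 1$). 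Substituting into \eqref{dcd3} delivers \eqref{gatto} with $(b_j^k)^\sigma=\sum_{|w|=d_j+1}^s a_j^w(h)\alpha_w^k+\gamma^k$. The first inequality in \eqref{esso} then reduces to the elementary bounds $\|h\|_\I^{|w|-d_j}\,r^{d_k-|w|}\le (\|h\|_\I/r)\,r^{d_k-d_j}$ and $\|h\|_\I^{s+1-d_j}\,r^{d_k-s}\le (\|h\|_\I/r)\,r^{d_k-d_j}$, both immediate from $|w|-d_j\ge 1$, $s+1-d_j\ge 1$ and $\|h\|_\I\le r\le 1$; the second inequality in \eqref{esso} is obtained by shrinking $\e_0$ further so that $C\e_0\le\chi$.

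The principal obstacle is the treatment of the integral remainder $\omega_j^\sigma$: \eqref{wiz3} only supplies an unscaled Euclidean bound, whereas the target estimate is expressed in the sub-Riemannian scale $\|h\|_\I/r$. The device of first invoking Lemma \ref{rough} to spread $\omega_j^\sigma$ across the $Y_i^\sigma(y)$ and then contracting back to $\{U_k^\sigma(y)\}$ via Corollary \ref{765} is precisely what imports the maximality information at $y$ into the final estimate and recovers the correct scaling of $(b_j^k)^\sigma$.
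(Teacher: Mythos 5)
Your proposal is correct and takes essentially the same route as the paper: expand $\partial_{h_j}E^\sigma$ via Theorem \ref{ema3}, rewrite the commutator terms in the $U_k^\sigma$-basis using Corollary \ref{765}, and handle the remainder $\omega_j^\sigma$ by first spreading it over the full family $Y_i^\sigma$ via Lemma \ref{rough} and then contracting back to the $U_k^\sigma$ via Corollary \ref{765}. The only cosmetic differences are that the paper multiplies by $r^{d_j}$ before invoking Lemma \ref{rough} as a bookkeeping device (your direct verification of the inequalities is equivalent), and the paper applies Corollary \ref{765} directly at $E_{\I,x}(h)\in B(x,\e_0\eta r)$ without your intermediate step of re-establishing maximality at $y$ through Theorem \ref{dapro} — the corollary is already stated to cover points in that ball, so that step is redundant but harmless.
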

\begin{remark}\label{pesso} Estimate \eqref{esso} and the results on Section \ref{jkt} imply
 that, under the hypotheses of Theorem \ref{papero3}, we have
\[
 |\lambda^\s_I(x)|\le C_1|\lambda^\s_I (E^\s_{I,x}(h))|\le C_2\left| \det\frac{\p}{\p h}E^\s_{I, x}(h) \right| \le C_3
 |\lambda^\s_I(x)|\quad\text{for all } h\in Q_\I(\e_0\eta r).
\]
\end{remark}

\begin{proof} [Proof of Theorem \ref{ema3}] Without loss of generality we may
work in  $\R^2$.   We drop everywhere the superscript $\sigma$. Then
 $
 E_I(x,h) =e_*^{h_1 U_1}e_*^{h_2 U_2}x.
 $ Denote by $u$ the identity function in $\R^n$.

 We first look at  $\p/\p h_1$.  Theorem  \ref{dididi} with    $X_w=U_1$ and $t=h_1$  gives:
\begin{equation*}
 \begin{aligned}
  \frac{\p}{\p h_1  } u\big(e_*^{h_1 U_1}e_*^{h_2 U_2}x\big) &= U_1u
(E_{\I,x}(h))+\sum_{|v|= d_1+1}^s
  \a_v(h_1) X_vu (E_{\I,x}(h))
  \\&\qquad + O_{s+1}\big(|h_1|^{(s+1-d_1)/d_1}, u, E_{\I,x}(h)\big),
 \end{aligned}
\end{equation*}
 where   we know that $|\a_v(h_1)| \le C
|h_1|^{(|v|-d_1)/d_1}$ and
 \[
    \big|O_{s+1}\big(|h_1|^{(s+1-d_1)/d_1}, u, E_{\I,x}(h)\big)\big|\le
C|h_1|^{(s+1-d_1)/d_1}.
 \]
Thus, since  $|h_1|^{1/d_1}\le \|h\|_\I$, we have proved \eqref{wiz} and
\eqref{pso} for $j=1$.

 Next
we look at the variable  $ h_2$.  Theorem \ref{dididi} gives
 \begin{equation}\label{duetre}
  \begin{aligned}
   \frac{\p}{\p h_2}   u \big( e_*^{h_1 U_1}e_*^{h_2 U_2}x \big)
   &= U_2 \big(u e_*^{h_1 U_1}
   \big)\big
   ( e_*^{h_2 U_2}x
   \big)
   \\&\quad   +\sum_{|v|=d_2+1}^s\a_v(h_2) X_v
  \big(u e_*^{h_1 U_1}
   \big)\big
   ( e_*^{h_2 U_2}x
   \big)
\\&\quad    +O_{s+1}\big(h_2^{(s+1-d_2)/d_2},
 u e_*^{h_1 U_1}
, e_*^{h_2 U_2}x
   \big),
  \end{aligned}
 \end{equation}
 where we know that  $ |\a_v(h_2)| \le C |h_2|^{(|v|-d_2)/d_2}\le
C\|h\|_\I^{|v|-d_2}$ and
 \[\begin{aligned}
 \Big|O_{s+1}\big(h_2^{(s+1-d_2)/d_2},
 u e_*^{h_1 U_1}
, e_*^{h_2 U_2}x
   \big)\Big|
&\le C
|h_2|^{(s+1-d_2)/d_2}\le C\|h\|_\I^{s+1-d_2}. \end{aligned}\]
Now,  a repeated application of  formula \eqref{skg}  gives
\begin{equation}\label{remi}
 \begin{aligned}
   U_2 &\big(u e_*^{h_1 U_1}
   \big)\big
   ( e_*^{h_2 U_2}x
   \big) = U_2u \big(E_{\I,x}(h)\big)
\\&
      +\sum_{ \a_1+\dots+\a_\nu=1}^{s-d_2}
      C_\a h_1^{(\a_1+\cdots+\a_\nu
       )/d_1}\ad^{\a_1}_{Z_1}\cdots\ad^{\a_\nu}_{Z_\nu}U_2 u
      \big(E_{\I,x}(h)\big)
      \\&
     +
      O_{s+1}\big(h_1^{(s+1-d_2)/d_1}, u,E_{\I,x}(h)\big),
\end{aligned}
\end{equation}
where we denoted briefly $e_*^{h_1U_1}=e^{-h_1^{1/d_1}Z_1}\cdots e^{-h_1^{1/d_1}Z_\nu},$
where $\nu$ is  suitable, $h_1>0$ and   $Z_j\in \{  \pm X_1,\cdots\pm X_m\}$. If
$h_1<0 $ the computation is analogue.

 To conclude the proof it suffices   to write
all the terms   $
  X_v(u e_*^{h_1 U_1})(e_*^{h_2 U_2}x)$ in \eqref{duetre}
   in the form  $X_v u \big(E_{\I,x}(h) \big)  $ plus an appropriate remainder.
 The argument is the same used in equation \eqref{remi} and
we leave it to the reader.
\end{proof}

\begin{proof}[Proof of Theorem \ref{papero3}. ] The  proof   relies on
Corollary \ref{765}. We drop everywhere  the superscript $\sigma$.   If
$(\I,x,r)$ is $\eta-$maximal,  then
 \eqref{tredici}  gives $|\lambda_\I(E_{\I,x}(h))| r^{\ell(\I)}\ge C^{-1}\eta \Lambda(E_{\I,x}(h),
 r)$, as soon as $h\in Q_\I(  \e_0 \eta r)$.

Write briefly
 $E$ instead of $E_{I,x}(h)$ . Looking at the right-hand side of \eqref{dcd3},
 we need to study, for any
 word $w $ of length $|w|=\ell$, with
 $\ell= d_j+1,\dots,s$,  the linear system
$
a_j^w(h)X_{w}(E )=\sum_{k=1}^nb_{j}^{k} U_k(E )
$
and we must show that the solution $b_j^k$ satisfies \eqref{esso},
if $\|h\|_{\I}\le \e_0\eta r$. By Corollary \ref{765} write  $X_w(E) = \sum p_w^k U_k(E)$, where $|p_w^k|\le \frac{C}{\eta}r^{d_k-|w|} $.
Thus
\[
 |b_j^k|=  |a_j^w p_w^k|\le C \|h\|_\I^{|w|-d_j }  \frac{C}{\eta}r^{d_k -|w|}
\le \Big( \frac{\|h\|_\I}{r}\Big)^{|w|- d_j}\frac{C}{\eta} r^{d_k - d_j}.\]
 Here we   also used    \eqref{pso3}.
This gives the estimate of the terms in the sum in \eqref{dcd3}.

 Next we look at the  the remainder $\omega_j$. Fix $j=1,\dots, n$.
 We know that $|\o_j|\le C\|h\|_\I^{s+1-d_j}$ and we want to write $\o_j=\sum_{k}b_j^k U_k(E)$ with estimate
 \eqref{esso}.
It is convenient to multiply  by $r^{d_j}$.   Let   $r^{d_j}\o_j=:\theta \in\R^n$ and $\xi^k = r^{d_j} b_j^k$.
Thus it suffices to show that we can  write
  $\theta=\sum_k \xi^k U_k(E)$, where $\xi^k$ satisfies the estimate
 $
  |\xi^k|\le \frac{C}{\eta} \frac{\|h\| }{r} r^{d_k}.
 $
We know   that
  \[
   |\theta|=|r^{d_j}\o_j|\le C\|h\|_\I^{s+1-d_j}r^{d_j}= C\Big(\frac{\|h\|_\I}{r}\Big)^{s+1-d_j}r^{s+1}.
  \]
To estimate $\xi^k$, we   follow  a two steps  argument:

{ \it  Step 1. } Write, by    Lemma \ref{rough},
 $
  \theta=\sum_{i=1}^q \mu^iY_i(E),
 $
for some $\mu\in\R^q$ satisfying $|\mu|\le C |\theta|\le C \Big(\frac{\|h\| }{r}\Big)^{s+1-d_j}  r^{s+1}$.

 {\it Step 2. } For any $i=1,\dots,q$ write $Y_i(E)=\sum_{k=1}^n \lambda_i^k U_k(E)$.
 This can be done in a unique way and estimate
 $|\lambda_i^k|\le \frac{C}{\eta} r^{d_k-\ell(Y_i)}$ holds, by Corollary \ref{765}.

 Collecting  {\it Step 1 }
and { \it Step 2,  } we conclude that
\[
| \xi^k| =\Big|\sum_{i=1}^q \mu^i\lambda_i^k\Big|\le
C \Big(\frac{\|h\| }{r}\Big)^{s+1-d_j} r^{s+1}\cdot \frac{C}{\eta} r^{d_k-\ell(Y_i)}\le
    \frac{C}{\eta}  \frac{\|h\| }{r}  r^{d_k},
\]
as required.   This ends the proof.
\end{proof}

Next we pass to the limit as $\sigma\to 0$ in both Theorems \ref{ema3} and \ref{papero3}.
\begin{theorem}\label{ema}
 If  $(\I,x,r) $ is $ \eta-$maximal for some $x\in K$, $r\le r_0$, then the map $ E_{\I,x}\big|_{Q_\I(\e_0\eta r)}$ is
locally   biLipschitz in the Euclidean sense and satisfies  for  a.e. $h$,
\begin{equation}\label{dcd}
\begin{aligned}
 \frac{\p}{\p h_j} E _{\I,x} (h) & = U_j   (E _{\I,x} (h)) +
 \sum_{|w|=d_j +1}^{s}a^w_j(h)X_{w}  (E _{\I,x} (h)) + \omega_j(x,h)
\\&= U_j(E_{I,x}( h))+\sum_{k=1}^n b_j^k(x,h) U_k(E_{I,x}( h)),
\end{aligned}
\end{equation}
where the sum is empty if $d_j=\ell(U_j)=s$ and otherwise the following
estimates hold:
\begin{equation}\label{wiz}
 |\o_j(x, h)|\le C \|h\|_\I^{s+1-d_j } \qquad \text{for all }  x\in K \quad   h\in Q_\I(r_0),
\end{equation}
\begin{equation}
 \label{pso}
|a^w_j(h)|\le C \|h\|_\I^{|w|-d_j }  \qquad \text{if} \quad  |w|=d_j+1,\dots,s
 \;\text{ and }\;   h\in Q_\I(r_0),
\end{equation}
and \begin{equation}\label{esso3}
| b_j^k(x,h)|\le \frac{C}{\eta}\frac{\|h\|_I}{r} r^{d_k-d_j}
\le \chi r^{d_k-d_j}\qquad\text{for all $h\in Q_I(\e_0\eta r)$}.\end{equation}

 \end{theorem}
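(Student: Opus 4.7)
The plan is to deduce Theorem \ref{ema} from its smooth counterparts, Theorems \ref{ema3} and \ref{papero3}, by letting the mollification parameter $\s$ tend to zero. Every constant in those two theorems, as well as in Theorem \ref{dapro} and Remark \ref{pesso}, depends only on $L$ and $\nu$, so the bounds are uniform in $\s\le\s_0$; Proposition \ref{schiaccia2} provides the uniform convergences $X_w^\s\to X_w$ on $K$ and $E_{\I,x}^\s\to E_{\I,x}$ on $K\times Q_\I(r_0)$ that drive the argument.

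First I would note that if $(\I,x,r)$ is $\eta$-maximal for the $X_j$, then it is $(\eta/2)$-maximal for $X_j^\s$ for all sufficiently small $\s$, because $\lambda_\I^\s\to\lambda_\I$ uniformly on $K$ for every $\I\in\ii$. Theorems \ref{ema3} and \ref{papero3} then apply to $E_{\I,x}^\s$, producing the smooth expansion \eqref{dcd3} with $\s$-independent coefficients $a_j^w(h)$ and a remainder $\omega_j^\s$ bounded uniformly in $\s$ by \eqref{wiz3}. Combining this uniform $L^\infty$ bound on $\partial_j E_{\I,x}^\s$ with the uniform convergence $E_{\I,x}^\s\to E_{\I,x}$, I would conclude that $E_{\I,x}$ is Euclidean-Lipschitz on $Q_\I(\e_0\eta r)$, hence a.e.\ differentiable by Rademacher's theorem, and that $\partial_j E_{\I,x}^\s\to\partial_j E_{\I,x}$ in $L^\infty$-weak-$*$.

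Next I would identify the a.e.\ derivative. The terms $U_j^\s(E_{\I,x}^\s(h))$ and $a_j^w(h)X_w^\s(E_{\I,x}^\s(h))$ on the right-hand side of \eqref{dcd3} converge uniformly in $h$, because $a_j^w$ is $\s$-independent and both $X_w^\s\to X_w$ and $E_{\I,x}^\s\to E_{\I,x}$ uniformly. Subtracting these from the weak-$*$ convergence of $\partial_j E_{\I,x}^\s$ forces $\omega_j^\s\to\omega_j$ weakly-$*$, and the bound \eqref{wiz3} passes to the limit, yielding \eqref{wiz} and the first equality of \eqref{dcd} at a.e.\ $h$. For the second equality I would rewrite $\omega_j$ and each $X_w(E_{\I,x}(h))$ in the basis $\{U_k(E_{\I,x}(h))\}_{k=1}^n$ via Corollary \ref{765}, which is already formulated for vector fields in $\A_s$; the bound \eqref{esso3} then follows from the estimates on $a_j^w$ and $\omega_j$ by exactly the two-step reduction used in the proof of Theorem \ref{papero3}.

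For the local Euclidean biLipschitz property I would use Remark \ref{pesso}, which gives $|\det DE_{\I,x}^\s(h)|\ge c\,|\lambda_\I(x)|>0$ with $c$ independent of $\s$, together with the factorization $DE_{\I,x}^\s=(U_1^\s,\dots,U_n^\s)(I+B^\s)$ with $\|B^\s\|\le\chi$ coming from \eqref{gatto}--\eqref{esso}. The classical inverse function theorem then makes each $E_{\I,x}^\s$ locally biLipschitz with constants independent of $\s$, and the uniform limit $E_{\I,x}$ inherits the property. The main obstacle I anticipate is the identification of the weak-$*$ limit $\omega_j$: no strong compactness on $\omega_j^\s$ is directly available, so one must carefully isolate the explicitly and uniformly converging portion of \eqref{dcd3} and let weak-$*$ compactness act only on the remainder.
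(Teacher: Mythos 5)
Your limiting argument for the derivative formula coincides with the paper's: both pass from \eqref{dcd3} to \eqref{dcd} by applying weak compactness only to the remainder $\omega_j^\sigma$, after observing that the $a_j^w$ are $\sigma$-independent and that the explicit vector field terms converge uniformly (the paper extracts a weakly $W^{1,2}$-convergent subsequence rather than using $L^\infty$-weak-$*$, which is immaterial), and both obtain the second line of \eqref{dcd} together with \eqref{esso3} by rerunning the proof of Theorem \ref{papero3}, whose only ingredient, Corollary \ref{765}, already holds for nonsmooth $\mathcal{A}_s$ fields. The gap is in your last step. You invoke ``the classical inverse function theorem'' to make each $E^\sigma$ locally biLipschitz ``with constants independent of $\sigma$'' and then let the uniform limit inherit the property. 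For the limit to inherit anything, the lower bound $|E^\sigma(h)-E^\sigma(h')|\ge c\,|h-h'|$ must hold on a Euclidean neighborhood of each $h_0$ whose \emph{radius} is also independent of $\sigma$, and that is precisely what a naive application of the inverse function theorem does not deliver: the radius it produces is governed by the modulus of continuity of $DE^\sigma=U^\sigma(E^\sigma(\cdot))(I+B^\sigma)$, and for commutators of maximal length $d_j=s$ the coefficients of $U_j^\sigma$ are mollifications of merely continuous functions, so their moduli of continuity (let alone Lipschitz constants) are not uniformly controlled as $\sigma\to0$.

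The paper closes this with two devices you would need to supply. First, an asymptotic equicontinuity statement: since $U_j^\sigma\to U_j$ uniformly on $K$ (Proposition \ref{schiaccia2}) and $U_j$ is continuous, for every $\delta>0$ there is $\rho>0$ such that $|U_j^\sigma(\xi)-U_j^\sigma(\xi')|<\delta$ whenever $|\xi-\xi'|<\rho$ \emph{and} $\sigma<\rho$; combined with the $\sigma$-uniform Lipschitz bound on $E^\sigma$ this yields a neighborhood of $h_0$, uniform in small $\sigma$, on which $U^\sigma(E^\sigma(h))$ oscillates by at most $\delta$ around $U^\sigma(E^\sigma(h_0))$. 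Second, no continuity of $B^\sigma$ is needed at all: one integrates $dE^\sigma$ along the segment from $h'$ to $h$, moves the integral inside the determinant using \eqref{aduo} together with the pointwise bound $|(b_j^k)^\sigma|\le\chi\, r^{d_k-d_j}$ from \eqref{esso}, and estimates the oscillation term by $C r^{1-s}\delta\,|h-h'|$. Choosing $\delta$ small compared with $|\lambda_I(x)|\,r^{(n-1)(s-1)}$ (Remark \ref{pesso} provides the lower bound on $|\det U^\sigma(E^\sigma(h_0))|$) gives a coercivity constant and a radius both uniform in $\sigma$, and only then does the pointwise limit inherit the lower Lipschitz bound. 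The rest of your proposal is sound and follows the paper's route.
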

\begin{remark}\label{ciuno}
 If $s\ge 3$, then vector fields of the class $\A_s$ are $C^1$. Then, as
discussed in
the beginning of the proof of Theorem \ref{dididi}, the map $E_{I,x}$ is
actually
$C^1$ smooth. This is not ensured if $s=2$.
\end{remark}

\begin{proof}[Proof of Theorem \ref{ema}]
Look first at the $C^1$ map $E^\s=E^{\s}_{\I,x}$ defined on $Q_\I( r_0)
$.
Denote by $E$ its pointwise limit as $\s\to 0$. By Theorem, \ref{ema3}, the map $E^\s $
satisfies for any $\s<\s_0$, $\|h\|_\I\le r_0$,
\begin{equation}\label{dcdc}
\begin{aligned}
 \frac{\p}{\p h_j} E^\s (h) & =U_j^\s (E^\s(  h)) +
 \sum_{|w|=d_j +1}^{s}a^w_j(h)X_{w}^\s(E^\s(h)) + \omega_j^\s(h),
\end{aligned}
\end{equation}
where $a_j^w$ do not depend on $\s$, while  $  |\o_j^\s( h)|\le C
\|h\|_\I^{s+1-d_j },$  uniformly in $\s\le \s_0$.

Let $E^{\s_k} $ be a sequence weakly converging to $E$ in $W^{1,2}$. Therefore, by \eqref{dcdc},
the remainder $\o_j^{\s_k}$ has a weak limit  in $L^2$. Denote it by $\omega_j$. Standard
 properties of weak convergence ensure that $ |\omega_j(h)|\le C_0
\|h\|_\I^{s+1-d_j }$ for
a.e.~$h$.
Therefore, we have proved the first line of \eqref{dcd} and estimates \eqref{wiz} and \eqref{pso}.
 To prove the second line and \eqref{esso3}, it suffices to repeat the argument
of Theorem \ref{papero3}, taking
 into account that the main ingredient there,
namely  Corollary \ref{765}, holds for nonsmooth vector fields in $\A_s$.

Now we have to prove the local injectivity of $E$.
Let $\s$ be small enough to ensure that $(I,x,r)$ is $\eta$-maximal for the
vector fields $X_j^\s$.
 In view of Theorem \ref{papero3}, we can    write
$
 dE^\s(h)=U^\s(E^\s(h)) (I_n+B^\s(h))
$, where $U^\s =[U_1^\s,\dots, U_n^\s]$,
 and   the entries of the matrix $B$ satisfy  $   |( b_j^k)^\s| \le C r^{d_k-d_j }$, by  \eqref{gatto}.
 Fix now
 $h_0\in Q_\I(\e_0\eta  r) $, where $\e_0\eta$ comes from Theorem
\ref{papero3}. We will show that $E^\s$ is locally one-to-one around
  $h_0$, with a stable coercivity estimate as $\sigma\to 0$.  By Proposition
\ref{schiaccia2} and by the continuity of the
  vector fields
  $ U_j$, we may claim that for any
  $\delta>0$ there is $\r>0$ such that $|U_j^\s(\xi)-U_j^\s(\xi')|<\delta$ as soon as $\xi,\xi'\in K$,  $|\xi-\xi'|<\r $ and
  $\s<\r $.
  Recall also that   $  E^\s $
is Lipschitz continuous,  uniformly in $\sigma$, see  \eqref{dcdc}. Then,
   for any   $\d>0$ there is $\r>0$ such that
    $B_{\rm{Eucl}}(h_0,\r)\subset Q_\I(\e_0\eta  r)$, and,
if     $|h-h_0|\le \r$ and $\s<\r$, then   $|U^\s (E^\s(h))- U^\s
(E^\s(h_0))|\le \delta $.

 Take  $h,h'\in B_{\mathrm{Eucl}}(h_0,\delta)$. By integrating on the path $\gamma(t)= h'+t(h-h')$,  we have
\[
 \begin{aligned}
| E^\s(h)-E^\s(h')| &  = \Big|\int_0^1  U^\s(E^\s(\g)) (I+B^\s(\g))(h-h')dt\Big|
\\&\ge \Big| \int_0^1U^\s(E^\s(h_0))(I+B^\s(\g))(h-h') dt \Big|
\\ &\quad
 -\Big|  \int_0^1\big(U^\s(E^\s(\g))- U^\s(E^\s(h_0))\big) (I+ B^\s(\g))(h-h') dt \Big|.
\end{aligned}\]
To estimate from below the first line recall the easy inequality $|Ax|\ge
C^{-1}\frac{|\det{A}|}{|A|^{n-1}} |x|$, for all   $A\in \R^{n\times
n}$.
The pointwise estimate  $   |( b_j^k)^\s| \le \chi  r^{d_k-d_j }$ gives  $|
\int_0^1  (b_j^k)^\s(\g))dt|\le \chi r^{d_k-d_j}$. Thus \eqref{aduo} gives
\[
 \Big|\det \int_0^1(I+ B^\s(\g)) dt \Big|  = \Big|\det \Big(I+\int_0^1
B^\s(\g)dt\Big)\Big|\ge \frac 12.
\]
  Observe also that $\abs{I+B^\s(\g    )}\le Cr^{1-s}$. Moreover, in view of
Remark  \ref{pesso}, it must be $\abs{\det U^\s(E^\s(h_0))}\ge C^{-1}
|\lambda_\I(x)|$, for small $\s$.
 This suffices to estimate from below the first line. To get an   estimate of the second line we need again the inequality
$|I+B^\s(\g    )|\le Cr^{1-s}$.
Eventually we get
\[
  \begin{aligned}
| E^\s(h)-E^\s(h')| &  \ge \{C_0^{-1}|\lambda_\I(x)| \, r^{(n-1)(s-1)} - C_0 r^{1-s} \delta\}|h-h'|,
\end{aligned}
\]
for any $\s<\r$ and $|h-h'|<\r$.
   The proof is concluded as soon as we choose $\d=\d(\I,x,r)$ small enough and let $\sigma\to 0$.

This argument shows that the map is locally biLipschitz, as desired.
\end{proof}

\subsection{Pullback of vector fields through scaling maps}\label{cinquantadue}
 Given an $\eta$-maximal
  triple $(\I, x,r)$, for vector fields of the class ${\mathcal A}_s$ we can define, as in \cite{TW}, the ``scaling map''
 \begin{equation}\label{sferetta}
  \Phi_{\I,x,r}(t)= \exp \Big(\sum_{j=1}^n t_j r^{\ell{i_j}} Y_{i_j}\Big)x,
 \end{equation}
 for small
  $|t|$. The dilation $\d_r^I(t): = (t_1 r^{\ell_{i_1}}, \dots,t_n r^{\ell_{i_n}} )$ makes the natural  domain of
  $\Phi_{I,x,r}$ independent of $r$.  Observe the property $\|\d_r^\I t\|_\I=
r\|t\|_\I$.
 It turns out that, if  $\widehat X_{k} $  ($k=1,\dots, m$) denotes      the  pullback  of
 $r X_{k}$ under $\Phi_{\I,x,r}$, then $\widehat X_1,\dots, \widehat X_m$
  satisfy the
 H\"ormander condition in an uniform way.
This fact enables the authors in \cite{TW} to give several simplifications
 to the arguments in \cite{NSW}.

We can also consider the scaling map   associated with our exponentials.
  Namely,
\begin{equation}
 S_{\I,x,r}(t):= \exp_*(t_1 r^{\ell_{i_1}}Y_{i_1})\cdots \exp_*
 (t_n r^{\ell_{i_n}}Y_{i_n})= E_{\I,x}(\d_r^\I t),
\end{equation}
It will be proved in Subsection  \ref{principale} that, if $(I,x,r)$ is $\eta$-maximal, then $S$ is one-to-one on the
set $\{\|t\|_\I\le \e_0 \eta\}$.
If we assume that the original vector fields are of class $C^1$, see Remark \ref{ciuno}, thus, we may define,  for all
$i\in\{1,\dots,q\}$ the
vector fields
$ \wh Y_j:  = S_*^{-1}(r^{\ell_i} Y_i).
$

Theorem \ref{ema}  thus becomes
 \begin{proposition}\label{scalate}Let $X_1, \dots, X_m$ be vector fields in
$\A_s$.
 Let  $(\I,x,r)$ be an  $\eta-$maximal triple and let $S:  = S_{ \I,x,r}$ be the
associated scaling map.
  Then $S\big|_{ Q_I(\e_0 \eta)}$ is a locally biLipschitz map
  and for a.e.   $t\in Q_I(\e_0 \eta)$ we may write
\begin{equation}\begin{aligned}\label{ghmm}
 S_*(\p_{t_j})& = r^{\ell_{i_j}}
 Y_{i_j} (S(t)) + \sum_{k=1}^{n}
\wh  b_j^k r^{\ell_{i_k} }Y_{i_k} (S(t)) ,
\end{aligned}\end{equation}
where the functions $\wh b_j^k$ satisfy
\begin{equation}\label{fails}
|\wh  b_j^k |\le \frac{C}{\eta} \|t\|_\I \qquad\text{for a.e.  $t\in
Q_\I(\e_0\eta)$.}
\end{equation}
 Moreover, if $S$ is $C^1$ and we  write
$\wh  Y_{i_j} = \p_{t_j} + \sum_{k=1}^n {a_j^k}(t)\p_{t_k}$, then
\begin{equation}\label{brutt}
|a_j^k(t)|\le \frac{C}{\eta}\|t\|_\I  \qquad\text{for all $t\in Q_\I(\e_0\eta)$}.
\end{equation}
 \end{proposition}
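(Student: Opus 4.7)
The plan is to deduce Proposition \ref{scalate} from Theorem \ref{ema} by a direct change of variables, since $S_{I,x,r}(t) = E_{I,x}(\delta_r^I t)$ and $\delta_r^I$ is an explicit diagonal dilation with $\|\delta_r^I t\|_I = r\|t\|_I$. The main computational work has already been done in Theorem \ref{ema}; the present proposition is just the rescaled version.

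First I would compute the pushforward $S_*(\partial_{t_j})$ by the chain rule:
\begin{equation*}
S_*(\partial_{t_j}) = r^{\ell_{i_j}}\,\frac{\partial}{\partial h_j}E_{I,x}(h)\Big|_{h=\delta_r^I t}.
\end{equation*}
Substituting the second line of \eqref{dcd} with $d_j = \ell_{i_j}$, $U_j = Y_{i_j}$, and $h = \delta_r^I t$, I obtain
\begin{equation*}
S_*(\partial_{t_j}) = r^{\ell_{i_j}} Y_{i_j}(S(t)) + \sum_{k=1}^n r^{\ell_{i_j}-\ell_{i_k}} b_j^k(x,\delta_r^I t)\cdot r^{\ell_{i_k}} Y_{i_k}(S(t)),
\end{equation*}
so $\widehat b_j^k(t) := r^{\ell_{i_j}-\ell_{i_k}} b_j^k(x,\delta_r^I t)$. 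Using the estimate $|b_j^k(x,h)| \le \frac{C}{\eta}\frac{\|h\|_I}{r} r^{d_k-d_j}$ from \eqref{esso3} together with $\|\delta_r^I t\|_I = r\|t\|_I$ yields precisely $|\widehat b_j^k(t)| \le \frac{C}{\eta}\|t\|_I$, which is \eqref{fails}. The local biLipschitz property of $S$ on $Q_I(\varepsilon_0\eta)$ follows at once from the local biLipschitz property of $E_{I,x}$ on $Q_I(\varepsilon_0\eta r)$ proved in Theorem \ref{ema}, composed with the bi-Lipschitz bijection $\delta_r^I \colon Q_I(\varepsilon_0\eta) \to Q_I(\varepsilon_0\eta r)$ (biLipschitz on each fixed scale).

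For \eqref{brutt}, I would write the matrix identity $S_*(\partial_{t_j}) = \sum_k M_{jk}\, r^{\ell_{i_k}} Y_{i_k}(S(t))$ with $M = I + B$, where $B = (\widehat b_j^k)$. By the previous bound, choosing $\varepsilon_0$ small enough that $\varepsilon_0 C \le \chi$ we can guarantee $|B| \le \chi$ pointwise on $Q_I(\varepsilon_0\eta)$, so $M$ is invertible with $|M^{-1}-I| \le C|B| \le \frac{C}{\eta}\|t\|_I$ by the Neumann series. Inverting gives $\widehat Y_{i_j} = S_*^{-1}(r^{\ell_{i_j}} Y_{i_j}) = \partial_{t_j} + \sum_k (M^{-1}-I)_{jk}\partial_{t_k}$, and the coefficients inherit the bound $\frac{C}{\eta}\|t\|_I$.

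The only subtle point is the last step, which requires that the smallness hypothesis $\|t\|_I \le \varepsilon_0\eta$ be quantitative enough to invert $I+B$ via \eqref{aduo}; this is exactly why the constant $\chi$ was fixed beforehand in \eqref{aduo}, and why the bound \eqref{esso3} is stated with the ``double'' conclusion $|b_j^k| \le \chi r^{d_k-d_j}$. Apart from that bookkeeping, everything is a mechanical unpacking of Theorem \ref{ema} under the dilation $\delta_r^I$, and no new estimate is needed.
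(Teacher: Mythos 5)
Your proof is correct and follows essentially the same route as the paper's: formula \eqref{ghmm} with the bound \eqref{fails} is obtained by composing \eqref{dcd}--\eqref{esso3} with the dilation $\delta_r^I$ (so that the factors of $r$ cancel exactly as you compute), and \eqref{brutt} then follows by inverting $I+\widehat B$ via the Neumann series. The only cosmetic remark is that your appeal to the determinant condition \eqref{aduo} is not really what is needed for the inversion step — the paper uses the simpler fact that $|B|\le\tfrac12$ implies $|(I+B)^{-1}-I|\le 2|B|$, which is the estimate actually delivering the bound $\frac{C}{\eta}\|t\|_I$ on the $a_j^k$; but that is the same Neumann-series idea you invoke, so there is no gap.
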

\begin{proof} Formula \eqref{ghmm}   is just Theorem   \ref{ema}.
The proof of  \eqref{brutt} is a consequence of    \eqref{fails} and
of the following elementary fact:
given a
 square  matrix $B\in \R^{n\times n}$  with   norm $|B|\le \frac12 $,   we may
write $(I_n+B)^{-1} = I_n+A $, and
$
 |A|= \big|\sum_{k\ge 1}(-B)^k \big|\le 2|B|.
$
\end{proof}

In the framework of our almost exponential maps,
estimate  \eqref{brutt} is sharp, even for smooth vector fields. The better estimate
$Y_{i_j}(t) = \p_j + \sum_{k }  {a_j^k}(t)\p_{k}$ with $|a_j^k(t)|\le C |t|$, obtained in
 \cite{TW} for maps of the form \eqref{sferetta}, generically fails for $S$, as the following example shows.

\begin{example}\label{wright}
Let  $X_1=\p_1,$ $X_2= a(x_1)\p_2$ with $a(s)= s + s^2$, or any   smooth
function  with    $a(0)=0$ and
$a'(0)\neq 0\neq a''(0) $. A computation shows that
\[
\exp_*(h[X_1, X_2])(x_1, x_2) = \big(x_1, x_2+ \big\{a(x_1+|h|^{1/2})-
a(x_1)\big\} |h|^{-1/2}h \big) .
\]
Therefore, at $(x_1, x_2)=(0,0)$, for small $r$, we must choose the maximal  pair of
commutators $X_1, [X_1, X_2]$ and  we
have
\begin{equation*}
\begin{aligned}
 S (t_1, t_2)& =
  \exp_*(t_1 rX_1)\exp_*(t_2 r^2 [X_1, X_2])(0,0)= \big ( t_1 r,  a(r|t_2|^{1/2}) |t_2|^{-1/2} t_2r  \big).
\\&= \big ( t_1 r,   t_2 r^2 +   |t_2|^{1/2} t_2 r^3\big).
\end{aligned}
\end{equation*}
Therefore,
\begin{equation*}
\begin{aligned}
\wh X_1 = \p_{t_1}, \qquad \wh X_2 = \frac{t_1+r t_1^2}{1+\frac 32 r
|t_2|^{1/2}}\p_{t_2}, \qquad
\widehat{  [X_1, X_2] } =  \frac{1+2rt_1}{1+\frac 32 r
|t_2|^{1/2}}\p_{t_2}.
\end{aligned}
 \end{equation*}
Clearly the formula $\wh{[X_1, X_2]}= \p_{t_2} + O(|t|)$ cannot hold, but \eqref{brutt} holds.  Observe
also that, writing  $\wh{[X_1, X_2]} = \widehat f_{(1,2)}\cdot\nabla, $ we have
$ \displaystyle\sup_{t\in U} \big|\wh X_2 \wh f_{(1,2)}\big|\simeq \sup_{t\in
U}|t_2|^{-1/2}=
+\infty,
$ for any neighborhood $U$ of the origin.
Therefore, the vector fields $\widehat X_1, \widehat X_2$ do not even  belong to the class $\A_2$.
\end{example}

\subsection{Ball-box theorem}
\label{principale}  Here we give our main result.  We keep the notation from Subsection \ref{unotre}.

\begin{theorem}
\label{distro}Let $X_1, \dots, X_m$  be H\"ormander vector fields of step $s$ in the class  $\A_s$.  There are $r_0,\wt r_0, C_0>0$,  and for all $\eta\in(0,1)$ there are $\e_\eta, C_\eta>0$ such that:
\begin{itemize}

\item [(A)]
 if  $(\I,x,r) $ is $ \eta-$maximal for some $x\in K$, $r\le r_0$, then,  for any   $\e\le\e_\eta$, we have
\begin{equation}\label{spia}
  E_{I,x}(Q_\I(\e r))\supset B_\rho(x, C_\eta^{-1}   \e^s r);
 \end{equation}
 \item [(B)] if  $(\I,x,r) $ is $ \eta-$maximal for some $x\in K$, $r\le \wt r_0$, then
  the map $E_{\I,x} $ is one-to-one on the set $Q_\I(\e_\eta r)$.
\end{itemize}
\end{theorem}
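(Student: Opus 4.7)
The plan is to prove both (A) and (B) by working with the mollified fields $X_j^\sigma$, deriving uniform-in-$\sigma$ bounds from Theorem \ref{ema3} and Proposition \ref{papero3}, and passing to the limit via Proposition \ref{schiaccia2}.

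For part (A), I will use an ODE lifting argument. Given $y \in B_\rho(x,\delta)$ with $\delta := C_\eta^{-1}\e^s r$, I pick a $\rho$-subunit curve $\gamma:[0,1]\to\R^n$ from $x$ to $y$, so $\dot\gamma = \sum_i c_i Y_i(\gamma)$ with $|c_i|\le \delta^{\ell_i}$. For $\e$ small, the Euclidean speed bound $|\dot\gamma| \le C r$ together with $\delta$ small forces $\gamma\subset B(x,\eta\e_0 r)$, so Corollary \ref{765} lets me rewrite $\dot\gamma=\sum_k \tilde c_k(t)U_k(\gamma(t))$ with $|\tilde c_k(t)|\le (C/\eta)(\delta/r)r^{d_k}$. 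I then lift $\gamma^\sigma$ to a curve $h^\sigma$ in the box by solving $dE^\sigma(h^\sigma)\dot h^\sigma = \dot\gamma^\sigma$. Theorem \ref{ema3} factors the Jacobian as $\mathcal{U}^\sigma(E^\sigma(h))(I+B^\sigma)^T$, and the same elementary Neumann series argument used in Proposition \ref{scalate} inverts $I+B^\sigma$ with entries controlled by $Cr^{d_k-d_j}$. Combining this inversion with the bounds on $\tilde c_k$ yields the coordinatewise estimate $|\dot h^\sigma_j(t)|\le (C/\eta)(\delta/r)r^{d_j}$, and integration gives $|h^\sigma_j(t)|^{1/d_j}\le ((C/\eta)(\delta/r))^{1/d_j}r$. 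Imposing this to be at most $\e r$ for every $j$ produces the constraint $\delta\le (\eta/C)\e^{d_j}r$; the worst case $d_j=s$ precisely defines $C_\eta$. A standard open-closed continuation argument (using the a priori bound) keeps $h^\sigma$ in $Q_\I(\e r)$ on all of $[0,1]$, so $E^\sigma(h^\sigma(1))=\gamma^\sigma(1)\to y$ as $\sigma\to 0$.

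For part (B), suppose $h,h'\in Q_\I(\e_\eta r)$ satisfy $E_{\I,x}(h)=E_{\I,x}(h')$. By convexity of $Q_\I$, the segment $h_t:=(1-t)h+th'$ stays in the box, and Theorem \ref{ema} guarantees that $E^\sigma$ is biLipschitz there, so the fundamental theorem of calculus gives
\[
0 = E^\sigma(h')-E^\sigma(h) = \Bigl(\int_0^1 dE^\sigma(h_t)\,dt\Bigr)(h'-h).
\]
It remains to show that $M^\sigma:=\int_0^1 dE^\sigma(h_t)\,dt$ is invertible. Passing to the rescaled coordinates via the dilation $\delta_r^\I$ of Subsection \ref{cinquantadue}, the rescaled Jacobian factors as $\widetilde{\mathcal{U}}^\sigma(\widetilde E^\sigma(t))(I+\widetilde B^\sigma)^T$, whose determinant is bounded below by a positive constant depending only on $\eta,\nu,L$ by the $\eta$-maximality, the estimate \eqref{esso3}, and \eqref{aduo}. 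For $\e_\eta$ small, continuity of $U_k^\sigma$ (uniform in $\sigma$, by Proposition \ref{schiaccia2}) together with $\|h_t\|_\I\le\e_\eta r$ keeps the rescaled $\widetilde{\mathcal{U}}^\sigma(\widetilde E^\sigma(t))$ within $o_{\e_\eta}(1)$ of its value at $t=0$ along the entire segment, so the rescaled $M^\sigma$ lies within a controlled perturbation of a single invertible matrix and is itself invertible. This forces $h'=h$ in the rescaled, hence original, coordinates, and the conclusion for $E$ follows by passing $\sigma\to 0$.

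The main obstacle is keeping every estimate stable as $\sigma\to 0$: the mollification perturbs the fields, but the constants in Theorems \ref{ema3} and \ref{papero3} depend only on $L$ and $\nu$, which are uniform. The most delicate technical point in (A) is the bootstrap ensuring the ODE for $h^\sigma$ is globally defined on $[0,1]$ and never escapes $Q_\I(\e r)$; in (B), it is the precise quantitative dependence of the perturbation $o_{\e_\eta}(1)$ on $\e_\eta$ that allows one to choose $\e_\eta$ purely in terms of $\eta$, $L$, $\nu$ without circularity with the other constants.
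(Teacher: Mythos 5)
Your part (A) follows the same lifting/bootstrap strategy as the paper, but there is a subtle gap at the very first step. You take a $\rho$-subunit curve $\gamma$ with $\dot\gamma=\sum_i c_i Y_i(\gamma)$ and then pass to the mollified curve $\gamma^\sigma$ driven by the $Y_i^\sigma$ with the same coefficients, claiming $E^\sigma(h^\sigma(1))=\gamma^\sigma(1)\to y$. The difficulty is that the top-length commutators $Y_i$ with $\ell_i=s$ are in general only continuous, not Euclidean Lipschitz; hence the ODE $\dot\gamma=\sum_i c_i Y_i(\gamma)$ need not have a unique solution, and the solutions of the mollified ODEs need not converge to the original $\gamma$. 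Proposition \ref{schiaccia2}(3) guarantees such convergence only for horizontal paths $\dot\phi=\sum_j b_jX_j(\phi)$, not for $\rho$-subunit paths. The paper explicitly flags this at the start of the proof and works around it by replacing $\rho$ with the distance $\wt\rho$ generated by $\{Y_j:\ell_j\le s-1\}\cup\{\partial_k\}$ (with the $\partial_k$ given weight $s$), so that only Lipschitz vector fields enter the ODE and the approximation step is legitimate. You need the same device; after that, the rest of your part (A) — rewriting $\dot\gamma$ in the $U_k$ basis via Corollary \ref{765}, inverting the Jacobian factor $I+B^\sigma$, the coordinatewise a priori bound, and the open–closed continuation — is the paper's argument, and the worst-case $d_j=s$ does indeed determine $C_\eta$.

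Your part (B) has a more serious gap: the approach you propose is exactly the one the paper rules out in Remark \ref{convesso}. You connect $h$ and $h'$ by the Euclidean segment $h_t$ (allowed, since $Q_\I$ is convex), apply the fundamental theorem of calculus to get $0=M^\sigma(h'-h)$ with $M^\sigma=\int_0^1 dE^\sigma(h_t)\,dt$, and claim $M^\sigma$ is invertible because the Jacobian stays close to a single invertible matrix along the segment. But closeness of $dE^\sigma(h_t)$ to its value at $t=0$ would require the matrix $\mathcal{V}(E^\sigma(h_t))=[U_1^\sigma,\dots,U_n^\sigma](E^\sigma(h_t))$ to vary by an amount small compared with its (possibly very small) determinant, which is of order $|\lambda_\I(x)|$. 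The only control one has here is the $d$-Lipschitz bound of \eqref{sudf}, and that bound yields invertibility of the averaged Jacobian precisely on a box of size $\sim|\lambda_\I(x)|$ — this is exactly Lemma \ref{cofg} and Proposition \ref{stst}, where the radius $r_x\sim|\lambda_\I(x)|$ can degenerate to zero on $K$. It does not yield injectivity on $Q_\I(\e_\eta r)$ for arbitrary $r\le\wt r_0$ with $\e_\eta$ depending only on $\eta,L,\nu$; that would implicitly require the continuity modulus of the length-$s$ commutators $U_k$ to be controlled by $L$ alone, which is not part of the hypotheses. The paper instead needs the much more elaborate layered construction: the stratification $\Sigma_1\supseteq\Sigma_2\supseteq\cdots$ and radii $r_x$ of Proposition \ref{stst}, the definition of the injectivity radius $\wt r_0$ by exhaustion, the chain of $n$-tuples $I=I_N,I_{N-1},\dots,I_0$ and nested radii $\rho_{N+1}<\cdots<\rho_0$ with \eqref{sote}, and finally a homotopy-lifting argument for the local diffeomorphism $E_{\I_1}$ against the genuinely injective $E_{\I_0}$ (iterated $N$ times). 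Your proposal skips all of this and so does not establish part (B).

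Finally, two smaller points. In part (A) you speak of solving $dE^\sigma(h^\sigma)\dot h^\sigma=\dot\gamma^\sigma$; the paper prefers to work with a local $C^1$ inverse $\psi$ of $E^\sigma$ and to estimate $\frac{d}{d\tau}\psi^\mu(\gamma^\sigma(\tau))$ via Lemma \ref{nove}, which packages the Jacobian-inversion step with the correct homogeneous scaling $r^{d_k-d_j}$; this is the same content, but Lemma \ref{nove} is where the uniformity in $\sigma$ is actually shown. In part (B) the appeal to $\det\int_0^1 dE^\sigma(h_t)\,dt$ would anyway require justification, since the determinant does not commute with the integral; this is subsumed in the main objection but worth keeping in mind.
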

\begin{remark}\label{elleuno}
  Observe that in the right-hand side of inclusion \eqref{spia} we use the distance $d_\r$. Therefore, a standard
 consequence
  of  \eqref{spia}
   is
    the well known  property
  $B(x,r)\supset B_{E}(x, C^{-1}r^s)$, for any  $x\in K$, $r<r_0$. See \cite{FP}.
\end{remark}
\begin{remark} \label{regolarita} In the paper \cite{TW}  the authors use
the exponential maps in \eqref{dinage}.
  If the vector fields have  step $s$, then
their method requires that the commutators of length $2s$ are at
least continuous. (Here, we specialize \cite{TW} to the case $\e=1$ and we
do not discuss the higher regularity estimate   \cite[Eq.~(2.1)]{TW}.)
This appears in the proof of (22) and (23) of
  \cite[Proposition 4.1]{TW}.
 Indeed in equation (29), the
commutator $[X_w, X_{w_j}]$ must be written as a linear combination
of commutators   $ X_{w' }$, where for algebraic reasons it must be
$\abs{w'} = \abs{w} + \abs{w_k} $. If
$\abs{w} = \abs{w_k} = s$, then commutators of degree
$2s$  appear.  A similar issue appears for $[Y_{w_i}, Y_{w_j}]$ at
the beginning of p.~619.
\end{remark}

\begin{remark}\label{errezero}
The reason why we introduce two different   constants $r_0$ and $\wt r_0$ is that $C_0,\e_0$ and $r_0$ depend only on $L$ and $\nu$ in \eqref{lipo} and \eqref{horma}  (together with universal constants, like $m,n$ and $s$). The constants  $\e_\eta$ and $C_\eta$ depend on $\nu, L$ and $\eta$ also.    We do not have a control of  $\wt r_0$  (which appears only in the injectivity statement)
 in term of $L$ and $\nu$. This is a delicate question because of the covering argument implicitly contained in
 \cite[p.~132]{NSW} and described in \cite[p.~230]{M}.
  Below we provide a constructive procedure to provide a lower bound  for $\wt r_0$ in term of the functions
  $\lambda_\I$.
See p.~\pageref{erresette}. This can be of some interest in view of applications of our results to nonlinear problems.
 \end{remark}
 \begin{remark}
   \label{convesso} The proof
  of the injectivity result would be considerably simplified if we could prove  (uniformly in $x\in K, r<r_0$)
    an equivalence between the balls and their convex hulls, i.e.
  $
   \mathrm{co} B(x,r)\subset B(x, Cr),
  $
  which is reasonable for diagonal vector fields (see \cite[Remark 5]{SW})   or
   a ``contractability'' property of the ball $B(x, r)$ inside   $B(x,
Cr)$. See
  \cite[Definition 1.7]{Se}.
    Unfortunately, in spite of their reasonable aspect, both these conditions seem quite difficult to prove
   in our situation.
It seems also that the clever argument in
\cite[p.~622]{TW}  can not be adapted to our almost exponential maps.
\end{remark}

In the proof of inclusion \eqref{spia},
we follow the argument in \cite{NSW,M}.
Before giving  the proof, we  need to show that some constants in the proof  actually  depend only on $L$ and $\nu$ in \eqref{lipo} and \eqref{horma}.
Basically, what we need is contained in Corollary \ref{765} and in the following
Lemma. See \cite[p.~129]{NSW}.
 \begin{lemma} \label{nove}
Assume
 that $(\I,x,r)$ is $\eta-$maximal for     vector fields $X_j$ in
$\A_s,$ $x\in K$, $r\le r_0$.
  Let   $\wt\sigma>0$  be  such that $(I, x, r)$ is $\eta$-maximal for the
mollified $X_j^\s$ for all $\s\le \wt \sigma$. Let
  $U\subset Q_\I( \e_\eta r ) $, where $\e_\eta $ comes from Theorem \ref{ema},
   and assume that a $C^1$ diffeomorphism   $\psi=(\psi^1,\dots,\psi^n): E^\s(U)\to U$ satisfies
$
 \psi(E^\s(h))=h,$ for any $h\in U.
$
Then we have the estimate
$|U_j^\s\psi^k(E^\s(h))|\le C r^{d_k-d_j},  $    for all $h\in U$,   where   $C$ is independent of $\sigma$.
\end{lemma}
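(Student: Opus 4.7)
The plan is to reduce the estimate to a matrix computation, using Theorem~\ref{papero3} to describe $dE^\s(h)$ as a product of a nondegenerate matrix of $U_k^\s$ evaluations and a matrix that is controlled after a diagonal rescaling. First, since $(\I,x,r)$ is $\eta$-maximal for $X_j^\s$ for all $\s\le \wt\sigma$, and $U\subset Q_\I(\e_\eta r)$, Theorem~\ref{papero3} (combined with the choice of $\e_\eta\le \e_0\eta$ so that estimate \eqref{esso} applies) yields, for $h\in U$,
\[
\frac{\p}{\p h_j}E^\s(h)=\sum_{k=1}^n\bigl(\d_{jk}+(b_j^k)^\s(x,h)\bigr)U_k^\s(E^\s(h)),
\qquad |(b_j^k)^\s|\le \chi\, r^{d_k-d_j}.
\]
Let $V(y)$ be the $n\times n$ matrix with columns $U_1^\s(y),\dots,U_n^\s(y)$, and let $A(h)$ be the matrix with entries $A_{kj}(h)=\d_{jk}+(b_j^k)^\s(x,h)$. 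The display above reads $dE^\s(h)=V(E^\s(h))\,A(h)$.

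Next I would conjugate $A$ by the diagonal matrix $D=\operatorname{diag}(r^{d_1},\dots,r^{d_n})$: writing $\wh B:=D^{-1}A\,D-I$, the $(k,j)$-entry equals $r^{d_j-d_k}(b_j^k)^\s(x,h)$, whose absolute value is bounded by $\chi$ in view of the above. Since $\chi$ has been chosen in \eqref{aduo} as a dimensional constant making $I+\wh B$ invertible with an absolutely bounded inverse, we obtain $A^{-1}=D(I+\wh B)^{-1}D^{-1}$, whence
\[
|(A^{-1})_{kj}|\le C\, r^{d_k-d_j},
\]
for a dimensional constant $C$ independent of $\s$.

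Finally I would differentiate the identity $\psi(E^\s(h))=h$ to obtain $d\psi(E^\s(h))\cdot dE^\s(h)=I_n$, and exploit the fact that $U_j^\s(E^\s(h))$ is precisely the $j$-th column of $V(E^\s(h))$: hence
\[
U_j^\s\psi^k(E^\s(h))=\bigl(d\psi(E^\s(h))\cdot U_j^\s(E^\s(h))\bigr)_k
=\bigl(A(h)^{-1}V(E^\s(h))^{-1}V(E^\s(h))\,e_j\bigr)_k=(A(h)^{-1})_{kj},
\]
so the previous bound yields $|U_j^\s\psi^k(E^\s(h))|\le C\, r^{d_k-d_j}$ with a constant independent of $\sigma$. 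The only real subtlety is verifying that the diagonal rescaling produces a matrix whose entries are uniformly bounded by $\chi$; this is exactly the content of the weighted estimate \eqref{esso}, which in turn required the $\eta$-maximality of $(\I,x,r)$ for each smoothed system. The rest is linear algebra and the chain rule.
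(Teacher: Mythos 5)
Your proof is correct and takes essentially the same route as the paper: the paper pre-scales by working with $S^\s(t)=E^\s(\delta_r t)$ and $\phi=\delta_{1/r}\psi$ so that the matrix $\widehat B$ from Proposition~\ref{scalate} already has uniformly small entries, whereas you perform the equivalent diagonal conjugation $D^{-1}AD$ explicitly on the unscaled $dE^\s$ from Theorem~\ref{papero3}. The two derivations are algebraically identical, and the remaining steps (chain rule applied to $\psi\circ E^\s=\mathrm{id}$ and identification of $U_j^\s\psi^k$ with an entry of $A^{-1}$) match the paper's argument.
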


\begin{proof}
It is convenient to work with the map $S^\s(t):= E^\s(\d_r t)$, so that $\phi := \delta_{1/r}\psi $ satisfies
$t=\phi(S^\s(t)) $, for all $t\in V:=\d_{1/r} U $. The chain rule gives
$ d\phi(S^\s(t)) dS^\s(t)= I$, for all $t\in V$.
 But, by \eqref{ghmm} we have  $dS^\s(t)= [r^{d_1}U_1^\s(S^\s),\dots,
r^{d_n}U_n^\s(S^\s)](I+\wh B^\s(t))$,
where  $|\wh B^\s(t)|\le\frac{ C}{\eta} \|t\|_I$, if $\|t\|_I\le \e_0\eta$.
  Therefore
 we may write
\[ d\phi(S^\s) [r^{d_1}U_1(S^\s),\dots, r^{d_n}U_n(S^\s)] = I+A^\s,
\] where, as in the proof of Proposition \ref{scalate}, $|A^\s(t)|
\le 2 |\widehat
B^\s(t)|$.
This implies that
$
|   r^{d_j} U^\s_j\phi^k(S^\s(t))|\le C $ and ultimately that $|   r^{d_j-d_k} U^\s_j\psi^k |\le C,
  $
as desired. \end{proof}

\begin{proof}[Proof of Theorem \ref{distro}, (A)]
Since the vector fields $Y_j$ are not Euclidean Lipschitz
continuous, if $\ell_j=s$, we do not know whether  or not  any point
in a
 $\rho$-ball of the $Y_j$ can be approximated by  points in the analogous ball
of the mollified $Y_j^\s$.
In order to avoid this
problem,  observe the  inclusion   $B_\r(x, r)\subset   B_{\wt\rho}(x, Cr)$ where $C$ is absolute and the distance $\wt\rho$
is defined using the family $\{Y_j : \ell_j\le s-1, \p_k :k=1,\dots, n\}$, where we assign to the vector fields $\p_k$
maximal weight $s$. Therefore, we will prove the inclusion using the distance
$\wt\rho$, which is defined by Lipschitz vector fields.

      Let   $(\I,x,r)$ be a $\eta-$maximal triple for the  original vector
fields $X_j$ and let $\wt \s$ be as in Lemma \ref{nove}.
Let $y\in B_{\wt\r}(x, C_\eta^{-1}   \e^s r)$, where $\e\le \e_\eta$, and
$\e_\eta$ comes from statement   (A),   while  $C_\eta$ will be discussed below.
Thus, $y=\gamma(1)$, where
$
 \dot\gamma  =\sum_{\ell_j\le s-1} b_j Y_j (\gamma ) + \sum_{i=1}^n \tilde b_j
\p_i(\gamma)$
 a.e. on $ [0,1],
$
 with
 $|b_j(t)|\le( C_\eta^{-1}   \e^sr)^{\ell(Y_j)}$ and $|\tilde b_i(t)|\le(
C_\eta^{-1}  \e^sr)^{s}$   for a.e.  $t\in[0,1]$.
Let also $y^\s\in B_{\wt\rho}(x, C_\eta^{-1}   \e^s
r)$ be an approximating family,
$y^\s=\gamma^\s(1)$, where
$
 \dot\gamma^\s =\sum_{\ell_j\le s-1} b_j Y_j^\s(\gamma^\s) + \sum_{i=1}^n \tilde b_j  \p_i(\gamma^\s)$
 a.e. on $ [0,1].$  Observe that $y^\s\to y$, as $\s\to 0$.

\medskip\noindent {\it Claim. } If $C_\eta $ is large enough, then for any $\s\le\wt\s$
  there is a lifting map $\theta^\s(t),$
 $t\in[0,1]$, with $\theta^\s(0)=0$ and such that
\begin{equation}
 \label{ottosette} E^\s(\theta^\s(t))= \gamma^\s(t) \quad \text{and}\quad
 \|\theta^\s(t)\|_\I< \e   r
 \quad \text{for all }   t\in[0,1].
\end{equation}
Once the claim is proved, the surjectivity statement follows.

 To prove the claim the key estimate we need is the following.
Let $U\subset Q_\I(\e_\eta r)$, $\s\le \wt\s $ and assume that a $C^1-$ diffeomorphism $\psi =(\psi^{1 },
\dots,\psi^{n })$ satisfies locally $\psi (E^\s(h))=h$, for all $h\in U  $,
where, for some $t\in[0,1]$,  $E^\s(U)  $ is a  neighborhood  of $\gamma^\s (t)$.
 Then, for $\mu=1,\dots,n$ and for all $\t$ close to $t$
\begin{equation}\label{giovanni}
 \begin{aligned}
& \Big|\frac{d}{d\t} \psi^\mu (\gamma^\s(\t))\Big|
\\&
=  \Big|\sum_{\ell_j\le s-1 }  b_j(\t)Y^\s_j\psi^\m(\gamma^\s(\t))
+\sum_{i=1}^n\tilde b_i (\t)\p_j\psi^\mu(\gamma^\s(\t))\Big|
\\&=  \Big|\sum_{\ell_j\le s }  b_j(\t)\sum_{k=1}^n a_j^k(\gamma^\s(\t)) U^\s_k \psi^\m(\gamma^\s(\t))
\\&\qquad +\sum_{i=1}^n
\tilde b_i (\t)   \sum_{k=1}^n \tilde a^k_i(\gamma^\s(\t)) U^\s_k \psi^\m(\gamma^\s(\t))
\Big|
\\&\le \sum_{j,k}
C(C_\eta^{-1}  \e^s r)^{\ell(Y_j)} \cdot \frac{C}{\eta} r^{d_k-\ell(Y_j)} \cdot Cr^{d_\mu-d_k}
\\&\qquad + \sum_{i,k}
C(C_\eta^{-1}  \e^s r)^{s} \cdot \frac{C}{\eta} r^{d_k-s} \cdot Cr^{d_\mu-d_k}
\\&\le\frac{C C_\eta^{-1}}{\eta} \e^s  r^{d_\mu}\le \frac{C C_\eta^{-1}}{\eta}(\e r)^{d_\mu}. \end{aligned}
\end{equation}
The constant $C_\eta $ will be chosen below, while $C$   depends on $L,\nu$,
 in force of Corollary \ref{765} and Lemma \ref{nove}.
We used the estimate $\p_i =\tilde a_i^k U_k^\s$ with $\abs{\tilde a_i^k}\le \frac
C\eta r^{d_k-s}$, which follows from Lemma \ref{rough} and Corollary \ref{765}.

With estimate \eqref{giovanni} in hands we can prove the claim along the lines of \cite[p.~228]{M}. Here is a sketch of the argument.

{\it Step 1.  } If $C_\eta$ is large enough,   then,  if $\theta^\s(t)$ satisfies $E(\theta^\s (t))=\gamma^\s(t)$
 on $[0,\bar t]$, for some $\bar t\le 1$, then $\|\theta^\s(t)\|_\I<\frac12\e r$, for any $t\le \bar t$.

To prove Step 1, assume by contradiction that the statement is false.
There is $\wt t\le \bar t$ such that $\|\theta^\s(t)\|_\I<\frac12\e r$ for all $t<\wt t$ and
  $\|\theta^\s(\wt t)\|_\I=\frac12\e r$. Then for some $\mu\in\{1,\dots, n\}$,  we have
\begin{equation}
 \begin{aligned}
\Big(  \frac 12 \e r\Big)^{d_\m}& = |\theta^\s_\m(\wt t)|
\le \frac{C C_\eta^{-1}}{\eta} (\e r)^{d_\mu}.
 \end{aligned}
\label{h5}
\end{equation}
This estimate can be obtained writing locally
$\theta^\s(t)=\psi(\gamma^\s(t))$ and using \eqref{giovanni}.
If we choose $C_\eta $   large enough to ensure that $\frac{C C_\eta^{-1}}{\eta} <(\frac12)^s$,  then
   \eqref{h5}
 can not hold and we have a contradiction. This ends the proof of   {\it Step 1. }

{\it Step 2. } There exists a path  $\theta^\s$  on $[0,1]$ satisfying \eqref{ottosette}.

The proof of Step 2   can be done as in \cite[p.~229]{M} by a very classical
argument, involving an upper bound ``of Hadamard type''
$\|dE^\s(\theta^\s(t))^{-1}\|\le C$, which holds uniformly in  $t $.

The proof of the statement (A) is concluded.
\end{proof}

Before proving part $(B)$ of Theorem \ref{distro}, we need the following rough injectivity statement.
\begin{lemma}
\label{cofg} Let $x\in K$ and  $\I$ such that $\lambda_\I(x)\neq 0$. Then the function $E_{\I,x}$
is one-to-one on the set
  $   Q_\I(C^{-1}|\lambda_\I(x)|)$.
  \end{lemma}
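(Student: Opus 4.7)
The plan is to smooth the vector fields to $X_j^\s$, estimate the Jacobian of $E^\s := E^\s_{\I,x}$ from below on a convex box around $0$, and pass to the limit $\s\to 0$ in the spirit of the proof of Theorem \ref{ema}. The crucial observation is that $Q_\I(\rho) = \prod_j(-\rho^{\ell_{i_j}},\rho^{\ell_{i_j}})$ is a product of intervals, hence convex, so a pointwise Jacobian bound on the box upgrades to injectivity via the fundamental theorem of calculus.

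First I would appeal to Theorem \ref{ema3}, which (crucially) does \emph{not} require any $\eta$-maximality: uniformly in $\s$ small,
\[
 \partial_{h_j} E^\s(h) = U_j^\s(E^\s(h)) + R_j^\s(h), \qquad |R_j^\s(h)|\le C_1\|h\|_\I,
\]
for $\|h\|_\I \le r_0$, with $C_1$ depending only on $L$. Multilinearity of the determinant then gives $|\det dE^\s(h) - \lambda_\I^\s(E^\s(h))| \le C_2\|h\|_\I$, and at $h=0$ we have $\det dE^\s(0)=\lambda_\I^\s(x)\to\lambda_\I(x)\neq 0$ by Proposition \ref{schiaccia2}. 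The main step is then a continuity argument: combining the displacement bound $|E^\s(h)-x|\le C\|h\|_\I$ (also a consequence of Theorem \ref{ema3}), the uniform convergence $\lambda_\I^\s\to\lambda_\I$ on $K$, and the Euclidean continuity of $\lambda_\I$ at $x$, one finds a (qualitative) $\rho_0>0$ such that $|\lambda_\I^\s(E^\s(h))|\ge \tfrac 34|\lambda_\I(x)|$ whenever $\|h\|_\I\le \rho_0$ and $\s$ is small. Taking $C$ large enough that $C^{-1}|\lambda_\I(x)|$ fits inside both $\rho_0$ and the region where the $C_2\|h\|_\I$ correction is smaller than $|\lambda_\I(x)|/4$, one obtains $|\det dE^\s(h)|\ge |\lambda_\I(x)|/2$ on all of $Q_\I(C^{-1}|\lambda_\I(x)|)$, uniformly in $\s$.

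Finally, convexity of the box lets me write, for $h,h'\in Q_\I(C^{-1}|\lambda_\I(x)|)$,
\[
 E^\s(h)-E^\s(h') = \bar M^\s(h-h'),\qquad \bar M^\s := \int_0^1 dE^\s(h'+t(h-h'))\,dt,
\]
since the segment stays in the box. The Jacobian estimate then gives $|\det \bar M^\s|\ge |\lambda_\I(x)|/2$, while $\|\bar M^\s\|$ is uniformly bounded by a constant depending only on $L$. By the elementary inequality $|Av|\ge |\det A|\,|v|/\|A\|^{n-1}$ used in the proof of Theorem \ref{ema}, we deduce $|E^\s(h)-E^\s(h')|\ge c\,|h-h'|$ for a constant $c>0$ independent of $\s$. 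Passing to the limit $\s\to 0$ via the pointwise convergence $E^\s\to E$ of Proposition \ref{schiaccia2} concludes that $E_{\I,x}$ is one-to-one on $Q_\I(C^{-1}|\lambda_\I(x)|)$.

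The main obstacle is the qualitative nature of the continuity bound for $\lambda_\I^\s$ along $E^\s$: commutators of length $s$ are only continuous, and their mollifications do not admit a uniform Euclidean Lipschitz estimate. This forces the constant $C$ to depend (nonconstructively) on a modulus of continuity of $\lambda_\I$ at $x$, in keeping with the ``rough'' character of the lemma and with the constructive lower bound on $\wt r_0$ discussed in Remark \ref{errezero}.
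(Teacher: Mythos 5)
Your overall strategy (Jacobian lower bound via Theorem \ref{ema3}, which requires no $\eta$-maximality; convexity of the box; uniform coercivity; passage to the limit $\s\to 0$) matches the paper's. However, there is a genuine gap in the middle step that makes your constant $C$ non-uniform in $x$, which the lemma cannot afford.

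You control $\det dE^\s(h)$ by writing $\det dE^\s(h)\approx\lambda_\I^\s(E^\s(h))$, and then you need a lower bound for $\lambda_\I^\s(E^\s(h))$ near $\lambda_\I(x)$. You obtain this by a \emph{qualitative} continuity argument (Euclidean continuity of $\lambda_\I$ at $x$ combined with $|E^\s(h)-x|\le C\|h\|_\I$ and uniform convergence $\lambda_\I^\s\to\lambda_\I$). The resulting $\rho_0$, and hence your $C$, then depends on a modulus of continuity of $\lambda_\I$ at $x$. But the $C$ in Lemma \ref{cofg} must be an \emph{absolute} constant, depending only on $L$ and $\nu$: Proposition \ref{stst} extracts a single $C>1$ valid for all $x\in K$, and the injectivity radius in the proof of Theorem \ref{distro}(B) is built by minimizing $C^{-1}|\lambda_{\I_x}(x)|$ over compact sets, which only works if $C$ does not vary with $x$. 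Your diagnosis — that the non-constructivity of $\wt r_0$ in Remark \ref{errezero} tolerates such an $x$-dependence — is not correct: the non-uniformity of $\wt r_0$ comes from the covering construction, not from Lemma \ref{cofg}, whose constant is taken uniform throughout.

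The paper avoids the continuity of $\lambda_\I$ altogether. Instead of comparing $\det dE^\s(h)$ to $\lambda_\I^\s(E^\s(h))$, it compares $dE^\s(h)$ directly with $dE^\s(0)=U^\s(x)$, whose determinant is exactly $\lambda_\I^\s(x)$. The difference $\partial_{h_j}E^\s(h)-U_j^\s(x)$ is split as $\bigl(\partial_{h_j}E^\s(h)-U_j^\s(E^\s(h))\bigr)+\bigl(U_j^\s(E^\s(h))-U_j^\s(x)\bigr)$. The first piece is $O(\|h\|_\I)$ by \eqref{wiz3} and \eqref{pso3}; the second is $O(\|h\|_\I)$ because $U_j^\s$ is \emph{$d^\s$-Lipschitz uniformly in $\s$} (for $\ell(U_j)<s$ this is Euclidean Lipschitz continuity, for $\ell(U_j)=s$ it follows from Proposition \ref{schiaccia2} item 2 and \eqref{dhc}), together with the control-metric displacement bound $d^\s(E^\s(h),x)\le C\|h\|_\I$. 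This is the estimate \eqref{sudf}, with $C$ depending only on $L$. You are right that mollified length-$s$ commutators do not have a uniform Euclidean Lipschitz bound, but they do have a uniform horizontal (i.e.\ $d^\s$-)Lipschitz bound, which is exactly what the $\A_s$ hypothesis provides and what the paper exploits. With that replacement your argument goes through and yields the uniform constant.

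Minor point: your $|\det\bar M^\s|\ge |\lambda_\I(x)|/2$ step conflates the determinant of the averaged Jacobian with the average of the determinant; the paper instead bounds $|dE^\s(0)(h-h')|$ from below directly and then subtracts the correction. Both are fixable, but the cleaner route is the paper's direct estimate on $|E^\s(h)-E^\s(h')|$.
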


\begin{proof}
Observe first that for all $j=1, \dots, n$ and small $\s$, we have
\begin{equation}\label{sudf}\begin{aligned}
\Big| \frac{\p}{\p h_j} & E^\s(h)- U^\s_j(x) \Big|
\\&\le \Big|\frac{\p}{\p h_j} E^\s(h)- U^\s_j(E^\s(h))\Big|+|U^\s_j(E(h))- U^\s_j(x)|
\le C\|h\|_\I,
\end{aligned}\end{equation}
 by estimates \eqref{wiz3},  \eqref{pso3}  and the $d$-Lipschitz continuity of $U^\s_j.$

 Fix $h,h'\in Q_\I(C^{-1}|\lambda_\I(x)|)$ and let $\g(t)=h'+t(h-h')$. Then
\[
 \begin{aligned}
| E^\s(h)-E^\s(h')|&= \Big|  \int_0^1 dE^\s(\g)(h-h')dt\Big|
   \\&\geq  |dE^\s(0)(h-h')|- \Big| \int_0^1\big\{ dE^\s(\g)-dE^\s(0)\big\}dt(h-h')\Big|
 \\&  \ge \big\{ C^{-1}|\lambda_\I^\s(x)| - C\max\{\|h\|_\I,\|h'\|_\I\}\big\}|h-h'|.
 \end{aligned}
\]
by \eqref{sudf} and because $dE^\s(0)=U^\s(x)= [U^\s_1(x) ,\dots, U^\s_n(x)] $ has determinant $\lambda_\I^\s(x)$.
The proof is concluded by letting $\s\to 0$.
\end{proof}

 As announced in Remark \ref{errezero},
 we provide a constructive procedure for the ``injectivity radius'' $\wt r_0$ in
Theorem \ref{distro} in
  term of the functions $\lambda_\I$.
 Compare \cite[p.~229-230]{M}.

Denote by $D_1,\dots, D_p $  all the  values attained by   $\ell(\I)$, as $\I\in\ii$. Assume that $D_1<\cdots <D_p$ and  introduce the notation:
  \begin{equation}\label{gtk}
   \sum_{\I}|\lambda_\I(x)| r^{\ell(\I)} = \sum_{j=1}^p r^{D_j} \sum_{\ell(\I)=D_j} |\lambda_\I(x)|  =
 :  \sum_{j=1}^p r^{D_j} \mu_j(x),
  \end{equation}
 where $\mu_j$ is defined by \eqref{gtk}.
  Let $\Sigma_1:=K$ and, for all  $k=2,\dots,p$,
  \begin{equation*}
  \begin{aligned}
   \Sigma_k: =\{x\in K : \mu_j(x)=0 \text{ for any } j=1,\dots, k-1\}.
  \end{aligned}  \end{equation*}
   Observe that $\Sigma_1= K \supseteq
  \Sigma_2\supseteq\cdots\supseteq\Sigma_p.$
  Let  $  x\in K$.  Take
  $
   j( x)=\min \{j\in\{1,\dots,p \}: \m_j(x)\neq 0\}.
  $
 Then choose   $\I_{  x} \in\ii$ such that
 $
 |\lambda_{\I_{  x}}(  x)|$ $=\displaystyle{\max_{\ell(\J)=  D_{j(x)}}|\lambda_\J(x)|}
 $ is maximal. Therefore, we have
 $
   |\lambda_{\I_{ x}}( x)|\simeq \m_{j(x)}(x),
 $
 through  absolute  constants.

From the construction above we get the following proposition.
  \begin{proposition}\label{stst}
There is $C>1$ such that, letting  $    r_{  x}:= C^{-1} |\lambda_{\I_{
x}}( x)| $  for all   $x\in K$, then:
   \begin{itemize}
   \item [(1)]
  we have
\begin{equation}\label{erty}
 | \lambda_{I_{ x}}( y)| r_x^{\ell(I_{ x})}> C^{-1}  \Lambda(y, r_x) \qquad \text{for all } y \in B ( x,\e_0 r_{ x}) ;
\end{equation}
\item[(2)] the map $h\mapsto E_{\I_{ x}}( y  , h)$ is one-to-one on the set $Q_{\I_{ x}}( r_{x})$, for any $y\in
    B ( x,  \e_0 r_{ x})$.
   \end{itemize}
  \end{proposition}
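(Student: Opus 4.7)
The plan is to first upgrade the pointwise information at $x$ into a genuine $\eta$-maximality of $(I_x, x, r_x)$ for some absolute $\eta>0$, and then combine Theorem \ref{dapro} with the rough injectivity Lemma \ref{cofg} to deduce both assertions.

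\textbf{First I would show} that $(I_x, x, r_x)$ is $\eta$-maximal with $\eta$ depending only on $L$ and $\nu$. Using the decomposition in \eqref{gtk}, up to absolute equivalence constants one has $\Lambda(x, r) \simeq \max_j r^{D_j} \mu_j(x)$. By minimality of $j(x)$, all contributions with $j < j(x)$ vanish at $x$; by the maximality that defines $I_x$, the $j = j(x)$ contribution satisfies $r^{D_{j(x)}} \mu_{j(x)}(x) \simeq r^{D_{j(x)}} |\lambda_{I_x}(x)|$; and for $j > j(x)$ the extra factor $r^{D_j - D_{j(x)}} \le r$ makes the contribution negligible once $r = r_x = C^{-1}|\lambda_{I_x}(x)|$ with $C$ large, because $\mu_j$ is uniformly bounded on $K$ by a constant depending only on $L$. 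Hence, for a sufficiently large absolute $C$, we obtain $\Lambda(x, r_x) \simeq r_x^{\ell(I_x)} |\lambda_{I_x}(x)|$, i.e.\ $\eta$-maximality for some absolute $\eta$.

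\textbf{Next}, Theorem \ref{dapro} applies. Part (1), namely \eqref{erty}, is its second conclusion, applied at points $y \in B(x, \eta \e_0 r_x)$; absorbing the factor $\eta$ into the radius constant $\e_0$ in the statement is harmless because $\eta$ is absolute. The same theorem (via \eqref{aceto}) gives the stability bound $|\lambda_{I_x}(y)| \ge \tfrac12 |\lambda_{I_x}(x)| = \tfrac{C}{2} r_x$ on the same ball, which I would use next.

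\textbf{For part (2)}, I would apply Lemma \ref{cofg} not at $x$ but at $y$: since $\lambda_{I_x}(y)\neq 0$ by the previous step, the lemma asserts injectivity of $E_{I_x,y}$ on $Q_{I_x}(C_1^{-1}|\lambda_{I_x}(y)|)$ for an absolute constant $C_1$. Combined with the lower bound $|\lambda_{I_x}(y)| \ge \tfrac{C}{2} r_x$, this gives injectivity on $Q_{I_x}\bigl(\tfrac{C}{2 C_1} r_x\bigr)$, which contains $Q_{I_x}(r_x)$ provided $C \ge 2 C_1$.

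\textbf{The main obstacle} is not analytical but a matter of bookkeeping: one must pick one single constant $C$ in the definition of $r_x$ which simultaneously (i) forces $r_x < r_0$ for $r_0$ the threshold of Theorem \ref{dapro}, (ii) makes the $\ell(K)>D_{j(x)}$ terms in Step 1 negligible so as to secure an absolute $\eta$, and (iii) dominates the injectivity constant $C_1$ from Lemma \ref{cofg}. All three requirements depend only on $L, \nu$ and on universal data ($n, m, s$), so a single admissible $C$ exists, yielding the proposition.
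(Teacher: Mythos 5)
Your proposal is correct and follows essentially the same route as the paper: establish $\eta$-maximality of $(I_x, x, r_x)$ for an absolute $\eta$, propagate it to nearby $y$ via Theorem~\ref{dapro}, then apply the rough injectivity Lemma~\ref{cofg} at $y$ together with the comparability $|\lambda_{I_x}(y)| \simeq |\lambda_{I_x}(x)|$ from \eqref{aceto}. The only cosmetic difference is in the first step: you argue through the aggregate quantities $\mu_j$ from \eqref{gtk} and a comparison $\Lambda(x,r) \simeq \max_j r^{D_j}\mu_j(x)$, whereas the paper compares $|\lambda_J(x)| r^{\ell(J)}$ directly against $|\lambda_{I_x}(x)| r^{\ell(I_x)}$ case by case ($\lambda_J(x)=0$; $\ell(J)=\ell(I_x)$; $\ell(J)>\ell(I_x)$), which in fact yields the exact maximality $|\lambda_{I_x}(x)| r^{\ell(I_x)} = \Lambda(x,r)$ on $[0,r_x]$ rather than merely $\eta$-maximality — but for the application to Theorem~\ref{dapro} both are equally adequate.
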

Observe that Proposition \ref{stst} is far from what we need,
because it may be $\inf\limits_K r_x=0$,
  (for example this happens in  the elementary situation $X_1=\p_1, X_2= x_1\p_2$.)

\begin{proof}   We first prove (1) for
$y=x $. Namely we show that
\begin{equation}\label{ertyu}
 |\lambda_{I_{ x}}(  x)| r^{\ell(I_{ x})}\geq |\lambda_J(   x)| r^{\ell(J)}
\qquad
  \text{for all $ J\in\ii$ \quad  $ r\in[0,r_x],$}
\end{equation}
 where $r_x=
  C^{-1}|\lambda_{I_x}(x)|$, as required.   Let $J\in\ii$.
 If  $\lambda_J(x)=0$, then
  \eqref{ertyu} holds for all $  r>0$.
  If instead
  $\lambda_J( x)\neq 0$,
   by the choice of $I_x$ it must be   $\ell(J)=\ell(I_{ x})$
    or  $\ell(J)>\ell(I_{  x})$. If   $\ell(J)=\ell(I_{ x})$, then
   \eqref{ertyu} holds for any   $r>0$,  because  $|\lambda_{\I_{ x}}(  x)|$
is maximal, by the  construction above. If   $\ell(\J)
>\ell(\I_{x})$,
then
 \[\begin{aligned}
 & |\lambda_{\J}( x)| r^{\ell(\J)} \le|\lambda_{\I_{ x}}
 ( x)|r^{\ell(\I_{ x})} \quad\Leftarrow \quad C r^{\ell(\J)- \ell(\I_{
 x})}\le |\lambda_{\I_{ x}}( x)|
\quad  \Leftarrow\quad  r\le C^{-1}|\lambda_{\I_{ x}}(
x)|.\end{aligned}\] Thus \eqref{ertyu} holds for any  $r\le r_{  x} $,
where $r_x$ has  the required form.

The proof of  (1) for   $y\neq x $ follows from   Theorem
\ref{dapro}.

Finally, to prove  (2)  observe that, in view of
Lemma \ref{cofg}, the map $h\mapsto E_{I_x}(y,h)$ is one-to-one on
$Q_{I_x}(C^{-1}|\lambda_{I_x}(y)|)$. But Theorem \ref{dapro}, in
particular \eqref{aceto} show that, if $d(x,y)\le \e_0 r_x$, then
$|\lambda_{\I_x}(y)|$ and $|\lambda_{\I_x}(x)|$ are comparable. This
concludes the proof.
   \end{proof}

\noindent{\it Proof of Theorem \ref{distro}, (C).}
    Let $p_1\le p$  be the largest  integer such that $\Sigma_{p_1}\ne\varnothing$.
      Then define the ``injectivity radius''
\begin{equation}
r_{(p_1)} :=\min_{x\in\Sigma_{p_1}} r_x =\min_{x\in
\Sigma_{p_1}}C^{-1} |\lambda_{\I_x}(x)|\ge C^{-1}
 \min_{x\in \Sigma_{p_1}}\mu_{p_1}(x)>0.
\end{equation}
Denote also
\[\Omega_{p_1}=\bigcup_{x\in \Sigma_{p_1}}
\Omega' \cap B(x, r_{(p_1)}),\] where the open set $\Omega'$ was
introduced before \eqref{lipo}. Recall that all metric balls $B(x, r)$ are open, by the already 
accomplished
Theorem \ref{distro}, part (A). Then, by Proposition \ref{stst},
 for
  any $y\in \Omega_{p_1}$
  there is $  x\in\Sigma_{p_1}$ such that the map $h\mapsto E_{\I_{  x}}(y, h)$ is one-to-one on
 $Q_{I_{ {x}}}(\e_0 r_x)$
  and $(\I_x, y, r_x)$ is $C^{-1}$-maximal.
   Recall that $r_x\ge r_{(p_1)}$, on $\Sigma_{p_1}$.

\label{erresette} Next let $p_2< p_1$ be the largest number such that $K_{p_2}:=
\Sigma_{p_2}\setminus\Omega_{p_1}\neq\varnothing $.  Then, let
\[
r_{(p_2)}:=\min_{x\in \Sigma_{p_2}\setminus\Omega_{p_1}} r_x\ge C^{-1}
\min_{x\in \Sigma_{p_2}\setminus\Omega_{p_1}} \mu_{p_2 }(x)>0.
\]
We may claim that   for any $y\in  \Omega_{p_2}:=\bigcup_{x\in K_{p_2}}\Omega'\cap B(x, r_{(p_2)} ),
$
there is $  x\in K_{p_2}$ such that the map $h\mapsto E_{\I_{
x}}(y,h)$ is one-to-one on the set $Q_{I_{ x}} (\e_0 r_x)$ and
$(\I_x,y,r_x)$ is $C^{-1}$-maximal.

Iterating the argument, and letting $\wt r_0 = \min \{ r_{(p_k)} \}$  we conclude that for any $x\in K$ there is  a
$n$-tuple $I_0=I_0(x)$, and $\r_0=\r_0(x)\geq\wt r_0$ such that  $E_{\I_0}(x,\cdot)$ is one-to-one on the set
$Q_{I_0}(\e_0 \r_0)$ and $(I_0,x,\r_0)$ is $C^{-1}$-maximal.  Clearly,  $\I_0$ can be different from $\I_x$.
This is
the starting point
 for the proof of the injectivity statement, Theorem \ref{distro}, item (B).

From now on  $\I,$ $x\in K$ and $r<\wt r_0$ are fixed and $(\I,x,r)$
is $\eta-$maximal, as in the hypothesis of (B).  Let $I_0$ and $\r_0$ be the $n$-tuple
  and the injectivity radius
associated with
$x$ by the argument above. Recall that  $\r_0\ge \wt r_0$. Arguing as in
\cite[p.~230]{M},
 see also
\cite[p.~133]{NSW}, we may find a sequence of $n-$tuples $I=I_N,
I_{N-1}, \dots, I_1, I_0 $ and correspoding numbers $0\le
\r_{N+1}<\r_N<\cdots< \r_0$, with $\r_0\ge \wt r_0$,
$r\in[\r_{N+1}, \r_N]$ such that   for any $j=0,1,\dots,N-1$,
\begin{equation}
\label{sote} |\lambda_{\I_j}(x)| \r^{\ell(\I_j)}\ge   \eta \Lambda(x,
\r),\quad\forall \;\r\in[\r_{j+1}, \r_j].
\end{equation}

In order to show that $E_\I = E_{\I_N}$
is one-to-one on the set
$Q_{\I}(\e_\eta r) $, for some   $\e_\eta>0$,
we start by showing
that $E_{\I_1}$ is one-to-one on the set $Q_{\I_1}( \e'_\eta \r_1 ),$ for a suitable $\e'_\eta$.
What we know is that $E_{\I_0}$ is one-to-one on the set
$Q_{\I_0}(\r_0)$. We also know that \eqref{sote} holds for
$j=0,1$ and $\r=\r_1$. Therefore, applying twice  \eqref{spia}, we have
\begin{equation}\label{etos}
\begin{aligned}
E_{\I_1}(Q_{\I_1}(\e_\eta \r_1))& \supseteq  E_{\I_0}(Q_{\I_0}(C_\eta^{-1}  \r_1) )
\supseteq  E_{\I_1}\big(Q_{\I_1}(
\e'_\eta  \r_1)\big).\end{aligned}
\end{equation}
Assume by contradiction  that $E_{\I_1}(h) = E_{\I_1}(h')=y$ for some $h, h'\in
Q_{\I_1}(\e_{\eta}'\r_1)$. Let $r(t)= h'+t(h-h')$, $t\in[0,1]$ be
the line segment connecting $h$ and $h'$. Let also $\gamma(t)=
E_{\I_1}(r(t))$. Since $E_{\I_0}$ is one-to-one (actually a $C^1$ diffeomorphism on
its image), we may contract $\g$ to a point just by  letting
$
q(\lambda,t)= E_{\I_0}\big(\lambda E_{\I_0}^{-1}(y) + (1-\lambda) E_{\I_0}^{-1}(\g(t))\big),$ where $  (\lambda,t)\in [0,1]\times[0,1].
$   Observe that $q$ is continuous
   on $[0,1]^2$, and $q(\lambda,t)\in Q_{\I_1}(\e_\eta \r_1)$, by \eqref{etos}.
    Moreover $q(0,t)=\g(t)=E_{\I_1}(r(t))$ and $q(1,t)=y$, for any
    $t\in [0,1]$.
   By standard properties of  local diffeomorphisms
    we may claim that there is a continuous lifting $p:[0,1]^2\to Q_{\I_1}(  \e_\eta  \r_1)$
   such that $E_{\I_1}(p(\lambda,t))=q(\lambda,t)$ and $p(0,t)= r(t)$ for all $\lambda$ and  $t\in[0,1]$.
Next observe that both the maps $\lambda\mapsto E_{\I_1}(p(\lambda,1))$ and $\lambda \mapsto E_{\I_1}(p(\lambda,0))$ are constants on $[0,1]$. Therefore, since $E_{\I_1}$ is a  local diffeomorphism, both $\lambda\mapsto p(\lambda,0)$ and  $\lambda\mapsto p(\lambda,1)$
must be constant. In particular $p(1,1)=p(0,1)=h'$ and $p(1,0)=p(0,0)=h$. Finally observe that the path $t\mapsto p(1,t)$ must be constant, because $E_{\I_1}(p(1,t))=y$ for all $t\in[0,1]$. Therefore we conclude that $h=h'$.

    Then we have
     proved that $E_{\I_1}$ is one-to-one
     on $Q_{\I_1}(\e_\eta' \r_1)$. Iterating the argument at most $N$ times,
     we get  the proof of  statement (B) of Theorem \ref{distro}.

\section{Examples}\label{esempi}
\begin{example}[Levi vector fields] \rm
In order to illustrate the previous procedure to find $\tilde r_0$
we exhibit the following three-step example. In $\R^3$ consider the   vector
fields $X_1= \partial_{x_1}+a_1 \partial_{x_3}$ and $X_2=
\partial_{x_2}+a_2 \partial_{x_3}$. Assume that the vector fields belong to the
class $\A_3$. Let us  define $f=X_1a_2-X_2a_1$. Morover
assume that  $|f|+|X_1f|+|X_2f|\neq 0$ at every
point of the closure of a bounded set  $\Omega\supset  K=\overline{\Omega'}$. Assume also that  $f$ has some zero inside $K$.  This condition naturally arises in the
regularity theory for graphs of the form $\{ (z_1, z_2)\in \C^2 : \Im(z_2) =\phi(z_1, \bar z_1, \mathrm{Re}(z_2))\}$  having
some first order zeros. See \cite{CM}, where the smoothness of $C^{2,\alpha}$ graphs with prescribed smooth Levi curvature is proved.

 In this situation we have  $n=3, m=2, s=3$ and
$Y_1=X_1, Y_2=X_2, Y_3=[X_1, X_2]=f\p_{x_3},
Y_4=[X_1,[X_1,X_2]]=(X_1f-f\p_{x_3}a_1) \p_{x_3},
Y_5=[X_2,[X_1,X_2]]=(X_2f-f\p_{x_3}a_2) \p_{x_3}.$ Thus, $q=5$ and
\[
\begin{aligned}
\lambda_{(1,2,3)}=&f,   &d( 1,2,3 )=4,\\
\lambda_{(1,2,4)}=&X_1f-f\p_{x_3}a_1,   &d( 1,2,4) =5,\\
\lambda_{(1,2,5)}=&X_2f-f\p_{x_3}a_2,   &d (1,2,5) =5.
\end{aligned}
\]
Let us put  $D_1=4, D_2=5$ and, by \eqref{gtk}, $\mu_1=|f|, \mu_2=|X_1f-f\p_{x_3}a_1|+
|X_2f-f\p_{x_3}a_2|.$ In this situation $\Sigma_1=K,$ $\Sigma_2=\{x\in K : \mu_1(x)=0
\}=\{x\in K : f(x)=0 \}.$ Hence, $r_{(2)}=\min_{x \in
\Sigma_2}r_x=\min_{x \in \Sigma_2} \max\{|X_1f(x)|, |X_2f(x)|\}>0.$  Let
$\Omega_2=\cup_{x\in \Sigma_2}\Omega'\cap B(x,r_{(2)})$, with
$\bar \Omega'=K$, and let $K_1=\Sigma_1\setminus \Omega_2.$ Since
$K_1 \subseteq \{x\in K: f(x) \neq 0\},$ if $K_1\neq \varnothing$ then
$r_{(1)} =\min_{x\in K_1}r_x=\min_{x\in K_1}|f(x)|>0.$ Finally, if
$K_1\neq \varnothing$ then $\tilde r_0=\min \{r_{(1)},r_{(2)}\}$,
while if $K_1=\varnothing$ then $\tilde r_0=r_{(2)}$.

\end{example}

\medskip

In next example we show a   subelliptic-type estimate for nonsmooth
vector fields. The argument of the   proof below is due to Ermanno Lanconelli (unpublished).
\begin{proposition}[H\"ormander--type estimate \cite{H}]
\label{ormai}  Let $ X_1,\dots, X_m$ be a family of vector fields of step $s$ and in  the class $\A_s$. Then, given $\Omega'\subset\subset\Omega$,  and $\e\in\left]0,1/s\right[$, there is $\wt r_0$ and $C>0$ such that
such that, for any $f\in C^1(\Omega)$,  \begin{equation}\label{orol}
[ f]_{\e}^2:=  \int\limits_{\substack{{\Omega'\times\Omega'},\\
   {d(x,y)\le \wt r_0}}}\frac{|f(x)-f(y)|^2}{|x-y|^{n+2\e }}dxdy\le C\int_\Omega \sum_j|X_jf(y)|^2 dy.
 \end{equation}
\end{proposition}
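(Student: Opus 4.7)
The strategy is a three-step scheme: a subunit-path bound for $f(x)-f(y)$, the ball--box conversion $d(x,y)\le C|x-y|^{1/s}$ from Remark~\ref{elleuno}, and a Fubini step that uses the almost exponential map as a biLipschitz change of variables.

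Choose $\wt r_0$ small enough that every subunit minimiser between points of $\Omega'$ at $d$-distance $\le \wt r_0$ stays in $\Omega$ and that the conversion derived from Remark~\ref{elleuno} applies. For $x,y\in\Omega'$ with $d(x,y)=r\le\wt r_0$ pick a subunit curve $\gamma:[0,1]\to\Omega$ from $x$ to $y$ with $\dot\gamma=\sum_j b_j X_j(\gamma)$, $\|b_j\|_\infty\le 2r$; the fundamental theorem of calculus and Cauchy--Schwarz give
\begin{equation*}
 |f(x)-f(y)|^2 \le C\,d(x,y)^2\int_0^1\sum_j |X_jf(\gamma(t))|^2\,dt.
\end{equation*}
Write $|\nabla_Xf|^2:=\sum_j|X_jf|^2$, set $h=y-x$, insert the comparison $d(x,y)\le C|h|^{1/s}$, and interchange the $(x,h)$ orders of integration to obtain
\begin{equation*}
 [f]_\e^2 \le C\int_{|h|\le C\wt r_0}\frac{|h|^{2/s}}{|h|^{n+2\e}}\,dh \cdot \sup_{t,h}\int_{\Omega'}|\nabla_Xf(\gamma_{x,x+h}(t))|^2\,dx.
\end{equation*}
In polar coordinates the first factor equals $C\int_0^{C\wt r_0}\rho^{\,2/s-2\e-1}\,d\rho$, which is finite precisely because $\e<1/s$.

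It remains to bound the inner $x$-integral by $C\int_\Omega|\nabla_Xf|^2$, uniformly in $t\in[0,1]$ and $|h|\le C\wt r_0$. By Theorem~\ref{dapro} one can choose, measurably and locally constantly on a finite partition of $\Omega'$, an $n$-tuple $\I(x)\in\ii$ that is $\eta$-maximal at $(x,\wt r_0)$ for a universal $\eta$. Parameterise the path by $\gamma_{x,x+h}(t):=E_{\I(x),x}(t^{d_1}\xi_1^*,\ldots,t^{d_n}\xi_n^*)$ where $(\xi_1^*,\ldots,\xi_n^*):=E_{\I(x),x}^{-1}(x+h)$ and $d_k:=\ell(Y_{i_k(x)})$. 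At $h=0$ this map is the identity in $x$; Theorem~\ref{ema} together with Remark~\ref{pesso} then implies that the Euclidean Jacobian of $x\mapsto\gamma_{x,x+h}(t)$ tends to the identity as $|h|\to 0$, uniformly in $t\in[0,1]$, so after shrinking $\wt r_0$ it stays between $1/2$ and $2$ for $|h|\le C\wt r_0$. The change of variable $z=\gamma_{x,x+h}(t)$ then gives $\int_{\Omega'}|\nabla_Xf(\gamma_{x,x+h}(t))|^2\,dx\le C\int_\Omega|\nabla_Xf|^2$, which closes the argument.

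The principal obstacle is the Jacobian control: one must show that $\det d_x\bigl(E_{\I(x),x}\circ(t^{d_1}\cdot,\ldots,t^{d_n}\cdot)\circ E_{\I(x),x}^{-1}(x+h)\bigr)$ stays uniformly close to $1$ for $t\in[0,1]$ and small $|h|$, tracking simultaneously $dE_{\I,x}$, $dE_{\I,x}^{-1}$ and the anisotropic stretching $(\xi_k)\mapsto (t^{d_k}\xi_k)$. This rests on the biLipschitz property of $E_{\I,x}$ on $Q_\I(\e_\eta\wt r_0)$ from Theorem~\ref{ema} and on the continuity of $x\mapsto E_{\I(x),x}$ where $\I$ is constant. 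Everything else is routine Euclidean integration, and the assumption $\e<1/s$ intervenes only through the convergence of the radial integral $\int_0^{C\wt r_0}\rho^{\,2/s-2\e-1}\,d\rho$.
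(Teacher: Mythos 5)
Your overall skeleton (fundamental theorem of calculus along a curve, plus the conversion $d(x,y)\le C|x-y|^{1/s}$ from the ball--box theorem, plus Fubini and polar coordinates with $\e<1/s$) matches the spirit of the argument, but there is a genuine gap in the middle step, and it is exactly the step the paper resolves with a different device.

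The problem is the reparameterisation $\gamma_{x,x+h}(t):=E_{\I(x),x}(t^{d_1}\xi_1^*,\ldots,t^{d_n}\xi_n^*)$. Your pointwise bound
\begin{equation*}
|f(x)-f(y)|^2\le C\,d(x,y)^2\int_0^1\sum_j|X_jf(\gamma(t))|^2\,dt
\end{equation*}
is derived for a \emph{subunit} curve $\gamma$, i.e.\ one with $\dot\gamma=\sum_j b_j X_j(\gamma)$. The anisotropically rescaled $E$-path is \emph{not} subunit: by Theorem~\ref{dididi} the $t$-derivative of $\exp_*(t X_w)$ is $X_w$ plus higher commutators, so the tangent to your path lies in the span of the full set of commutators $Y_1,\dots,Y_q$, not in the span of $X_1,\dots,X_m$. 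Applying the fundamental theorem of calculus along that path therefore produces terms $X_w f$ with $|w|\ge 2$ on the right, which cannot be absorbed into $\int_\Omega|\nabla_Xf|^2$. In short, you establish the pointwise bound along one curve and then silently replace it by a different curve to which the bound does not apply. Separately, the claimed uniform Jacobian control of $x\mapsto\gamma_{x,x+h}(t)$ is not justified: the biLipschitz statement in Theorem~\ref{ema} controls the $h$-derivative of $E_{\I,x}$, while here you differentiate in $x$, through the $x$-dependent composition $E_{\I,x}\circ\delta_t\circ E_{\I,x}^{-1}(x+h)$.

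The paper circumvents both issues at once using the \emph{control function} $\gamma_\I(x,h,t)$ of \cite{LM}: because $\exp_*(h_jU_j)$ is, by definition, a finite composition of genuine flows $e^{\pm h_j^{1/d_j}X_k}$ of the original vector fields, the point $E_\I(x,h)$ can be reached from $x$ by a concatenation of $X_j$-flows with total time $T(h)\le\|h\|_\I\le|h|^{1/s}$. This path is quasi-subunit (its tangent is always $\pm X_k$), so the fundamental theorem of calculus yields only $X_kf$ terms; and for fixed $h,t$ the map $x\mapsto\gamma_\I(x,h,t)$ is a flow composition, hence a measure-controlled change of variables, which is exactly the Jacobian fact you need but cannot extract from your parameterisation. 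The paper also changes variables via $y=E_\I(x,h)$ rather than $y=x+h$, using the local biLipschitzness and $|E_\I(x,h)-x|\ge C^{-1}|h|$; this keeps the subunit curve and the parameter $h$ tied together, which is what makes the Fubini step close. Your approach can be repaired by adopting the control-function parameterisation in place of the anisotropic $E$-path; the rest of your outline (the power counting $T(h)\lesssim|h|^{1/s}$, the polar integral, and the role of $\e<1/s$) then goes through as you wrote it.
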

\begin{proof}
We just  sketch the proof, leaving some details to the reader.  For any $I\in\ii$,  let $\Omega_\I
:= \{ x\in\Omega :  \I_0(x) =\I\} $, where $\I_0(x)$ comes from the proof of
Theorem \ref{distro}, together with $\r_0=\r_0(x)\ge \wt r_0$, see the
discussion before equation \eqref{sote}.   If $x\in \Omega_\I$, we have $B(x,
\r_0)\subset E_\I(x,Q_\I(C \r_0))$, where the biLipschitz map $E_\I$ satisfies
  $C^{-1}\le |\det dE_\I(x,h)|\le C$, for a.e.  $h\in Q_\I(C \r_0)$. Thus,
 \[
 \begin{aligned}\,
[f]_{\e}^2  = \int\limits_{\substack{\Omega'\times\Omega'\\ d(x,y)\le  \wt r_0}} \frac{|f(x)-f(y)|^2}{|x-y|^{n+2\e}}dxdy
  &\le \sum_{\I\in \ii}
   \int_{\Omega_\I}dx\int_{d(x,y)\le \r_0}dy\frac{|f(x)-f(y)|^2}{|x-y|^{n+2\e}}
\\ \le C \sum_\I &\int_{\Omega_\I}dx\int_{Q_\I(C \r_0)} dh \frac{|f(x)-f(E_\I(x,h))|^2}{|x-E_\I(x,h)|^{n+2\e }}.\end{aligned}
\]
Now observe that, arguing as in the proof of Lemma \ref{cofg}, we have   $|E_\I(x,h)-x|\ge C^{-1}|h|$, if $\|h\|_\I\le C  \r_0$.
Let $\d_0=\max_{x\in K}\r_0(x)$. Next we follow the argument   in \cite{LM}.  Write $E_\I(x,h)=\gamma_\I(x,h,T(h))$, where $\gamma_\I(x,h,t)$, $t\in[0, T(h)]$ is a \emph{control function}, with the properties described in \cite{LM}. \rm
Therefore
\[
 \begin{aligned}
  \ [f]_\e^2 &= \sum_\I\int_{\Omega_\I}dx\int_{Q_\I(C \delta_0)} dh \frac{|f(x)-f(E_\I(x,h))|^2}{|h|^{n+2\e}}
 \\&\le C\int_{Q_\I(C \d_0)} \frac{dh}{|h|^{n+2\e}} \int_{\Omega_\I}dx \Big|\int_0^{T(h)}
 dt|Xf(\g_\I(x,h,t))|\Big|^2
\\& \le C \int_{Q_\I(C \d_0)} \frac{dh}{|h|^{n+2\e}}\Big\{  \int_0^{T(h)}dt\Big(\int_{\Omega_\I}dx  |Xf(\g_\I(x,h,t))|^2 \Big)^{1/2}\Big\}^2
\\&\le C  \int_{Q_\I(C \d_0)} \frac{dh}{|h|^{n+2\e}} T(h)^2 \|Xf\|_{L^2(\Omega)}^2\le C\|Xf|_{L^2(\Omega)}^2,\end{aligned}
\]
because $x\mapsto \gamma_\I(x,h,t)$ is a change of variable,  by  estimate $T(h)\le \|h\|_\I\le |h|^{1/s}$ and the strict
inequality $\e<1/s.$\end{proof}
 The borderline inequality $\|f\|_{1/s}\le C\|Xf\|_{L^2}$, which
can not be obtained  with the argument above, was proved in the
smooth case by Rothschild and Stein \cite{RoS}.

\section{Proof of   Proposition \ref{schiaccia2}} \label{appendiamo}
\setcounter{equation}{0}

Here we prove  Proposition \ref{schiaccia2}.
 \label{schiaccia}
   By definition, \eqref{dhc} means that
 for all $j,k\in\{1,\dots, m\}$ and  $\abs{w}\le s-1$
 there is a bounded function $X_j (X_k f_w)$ such that for any test function
   $\psi\in C_c^{\infty}(\R^n)$,
  \begin{equation}\label{chillo}
  \int(X_k f_w)(X_j\psi)  = -\int\{X_j (X_k f_w)+\mathrm{div}(X_j)X_k f_w\}\psi.
  \end{equation}
If $D= \p_{j_1}\cdots\p_{j_p} $ for some $j_1, \dots, j_p\in\{1,\dots,n\}$ is
an Euclidean derivative, denote by  $|D|=p$ its order.
 It is understood that a derivative of order $0$ is the identity.

The first item of Proposition \ref{schiaccia2}   is a consequence of the following lemma:
\begin{lemma}\label{mp4}
     Let $X_1, \dots, X_m$ be vector fields in $ \A_s$.
     Then for any word  $w$ with $|w|\le s$ and for any Euclidean derivative $D$ of order  $ |D|=p\in\{0,\dots,   s
     -|w|\}$, we have
   \begin{equation}\label{mp3}
   \sup_K \left|D f_w^\s -(D f_w)^{(\s)}\right|\le C\sigma.
   \end{equation}
\end{lemma}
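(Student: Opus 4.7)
The plan is to proceed by induction on $\ell := |w|$. The base case $\ell=1$ is trivial: by definition \eqref{civetta}, $f_w^\sigma = (f_w)^{(\sigma)}$, and Euclidean derivatives commute with convolution, so $Df_w^\sigma - (Df_w)^{(\sigma)} \equiv 0$. For $\ell\ge 2$, factor $w = j_1 w'$ with $|w'|=\ell-1$. From \eqref{civetta} one has
\[
f_w^{\sigma,k} = \sum_l \bigl\{(f_{j_1}^l)^{(\sigma)}\partial_l f_{w'}^{\sigma,k} - f_{w'}^{\sigma,l}\partial_l (f_{j_1}^k)^{(\sigma)}\bigr\},
\]
together with the analogous identity (without $\sigma$-decorations on the right-hand side) for $f_w^k$. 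Applying $D$ and expanding via Leibniz, both $Df_w^{\sigma,k}$ and $(Df_w^k)^{(\sigma)}$ become finite sums of corresponding product terms indexed by $(l,D_1,D_2)$ with $|D_1|+|D_2|=|D|$, and it suffices to show that each matching pair differs by $O(\sigma)$ uniformly on $K$.

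For a typical summand of the first type, using that $D_1$ commutes with convolution and inserting an intermediate expression,
\[
\begin{aligned}
(D_1(f_{j_1}^l)^{(\sigma)})(D_2\partial_l f_{w'}^{\sigma,k}) &- \bigl((D_1 f_{j_1}^l)(D_2\partial_l f_{w'}^k)\bigr)^{(\sigma)}\\
&= (D_1 f_{j_1}^l)^{(\sigma)}\bigl[D_2\partial_l f_{w'}^{\sigma,k} - (D_2\partial_l f_{w'}^k)^{(\sigma)}\bigr]\\
&\quad + \bigl[\phi^{(\sigma)}\psi^{(\sigma)} - (\phi\psi)^{(\sigma)}\bigr],
\end{aligned}
\]
with $\phi := D_1 f_{j_1}^l$ and $\psi := D_2\partial_l f_{w'}^k$. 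The first bracket is $O(\sigma)$ by the inductive hypothesis applied to $w'$, since $|D_2|+1 \le |D|+1 \le s-\ell+1 = s-|w'|$. The second bracket is a standard mollification--product commutator, bounded by $C\sigma$ whenever one factor is locally Lipschitz and the other essentially bounded; this follows by inserting $\phi\psi^{(\sigma)}$ and using Lipschitz continuity of $\phi$ together with boundedness of $\psi$. The symmetric summand $(D_1 f_{w'}^{\sigma,l})(D_2\partial_l (f_{j_1}^k)^{(\sigma)})$ is handled identically with the roles of $f_{j_1}$ and $f_{w'}$ interchanged.

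The only real issue is a careful bookkeeping of regularity at each step, to guarantee that in every Leibniz term one factor is locally Lipschitz and the other essentially bounded. Since $|D|\le s-\ell$ and $\ell\ge 2$, in the first product type $|D_1|\le s-\ell\le s-2$, so $D_1 f_{j_1}^l$ is Lipschitz by the $C^{s-2,1}_{\loc}$ hypothesis on the $X_j$'s; while $D_2\partial_l f_{w'}^k$ is, by an easy recursion on $|w'|$, a polynomial combination of Euclidean derivatives of the $f_j$'s of total order $\le |D|+\ell-1 \le s-1$, hence essentially bounded on $\Omega$. In the symmetric product type the roles reverse: $D_1 f_{w'}^l$ is a polynomial combination of derivatives of the $f_j$'s of total order $\le s-2$, hence Lipschitz, while $D_2\partial_l f_{j_1}^k$ has order $\le s-1$, hence bounded. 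Summing finitely many such $O(\sigma)$ contributions closes the induction.
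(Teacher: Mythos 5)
Your proof is correct and follows essentially the same strategy as the paper's: induction on $|w|$, the recursive identity for commutator coefficients, Leibniz, insertion of an intermediate cross term, and separate treatment of the induction bracket and the mollification--product commutator bracket, together with the same regularity bookkeeping (max derivative order on any single factor stays $\le s-2$ for the Lipschitz factor and $\le s-1$ for the bounded factor). The only presentational differences are that you carry out the full Leibniz expansion over all pairs $(D_1,D_2)$ and fold the case $|D|=0$ into the general argument, whereas the paper writes out only the two extremal Leibniz terms (implicitly standing for all of them) and treats $D=\mathrm{id}$ as a separate case at the end; neither difference constitutes a different route.
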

Note that, the case $p=0$ of \eqref{mp3} provides the proof of item 1 of Proposition \ref{schiaccia2}.

Observe also that, if $\abs{w}=s$, then  we have  $\abs{f_w-
f_w^\s}\le \abs{f_w- (f_w)^\s}+ \abs{f_w^\s- (f_w)^\s}$. Lemma
\ref{mp4} gives the estimates of the second term. The first one is
estimated by means of the continuity modulus of $f_w$, which is not
included in $L$ in \eqref{lipo}.

\begin{proof}[Proof of  Lemma \ref{mp4}]
We argue by induction on $|w|$. If $|w|=1$, then the left hand side of  \eqref{mp3} vanishes.
Assume that for some $\ell\in\{1, \dots, s-1\}$,    \eqref{mp3} holds for any
word
  $w $ of length $\ell$ and for each $D$ with $|D|\le s-\ell$.  Let
$v=kw $ be a word of length $|kw|= \ell+1$.
We must show that for any Euclidean derivative $D$ of order
$0\le |D|\le s-|v| $,  \eqref{mp3} holds.
We have  $f_v= X_k f_w-X_w f_k$ and $f_v^\s= X_k^\s f_w^\s-X_w^\s f_k^\s$. We first prove \eqref{mp3} when the order of $D$
satisfies $  1\le |D| \le s-|v|=s-\ell-1$, which can occur  only if $\ell\le s-2$ (in particular
this implies $s\ge 3$).
 The easier case is when $D$ is the identity operator and it will be proven
below.
\[\begin{aligned}
D f_v^\s -(Df_v)^{(\s)} &  = D\big( X_k^\s f_w^\s-X_w^\s f_k^\s\big)
                    -\big(D( X_k f_w-X_w f_k)\big)^{(\s)}
 \\&
 =  D\big( X_k^\s f_w^\s \big) -  (DX_k f_w)^{(\s)}
  -  \{D( X_w^\s f_k^\s )
                    -\big(DX_w f_k\big)^{(\s)} \}
\\&= : (A)-(B).  \end{aligned}
\]
Omitting summation sign on $\a=1,\dots,n$, we may write
\[
 \begin{aligned}
(A)& = (D f_k^\a)^{(\s)}\big\{\p_\a f_w^\s -(\p_\a f_w)^{(\s)}\big\} +
\big\{ (D f_k^\a)^{(\s)} (\p_\a f_w)^{(\s)}- \big( (D f_k^\a) \p_\a f_w\big)^{(\s)}\big\}
\\&+  (  f_k^\a)^{(\s)} \big\{        D\p_\a f_w^\s - (D\p_\a f_w)^{(\s)}\big\}
  +\big\{ (  f_k^\a)^{(\s)}  (D\p_\a f_w)^{(\s)} -  (  f_k^\a   D\p_\a f_w\big)^{(\s)} \big\}
\\& = :(A_{1})+(A_{2})+(A_{3})+(A_4).
 \end{aligned}
\]
The estimate $|(A_{1})|+|(A_3)|\le C\sigma$ follows from the induction assumption.
To estimate $(A_{4})$ observe that
\[
\begin{aligned}
|(A_{4})|&= \Big|\int\big\{ (f_k^\a)^{(\s)}(x) -f_k^\a(x-\s y)\big\}  D\p_\a f_w(x+\s y)\phi(y) dy\Big|\le C\sigma,
  \end{aligned}
\]
because $f_k$ is Lipschitz, while $ D\p_\a f_w\in L^\infty_{\loc}$. Indeed, since $|w|=\ell$,   $f_w\in W^{s-\ell,\infty}$. Moreover,  $D$ has length at most  $s-\ell-1$ so that $D\p_\a$ has lenght at most $s-\ell$.
The  estimate of $(A_{2})$ is analogous to that of $A_{4}$. Just recall that $D f_k^\a$ is Lipschitz and $\p_\a f_w$ is bounded.

Next we estimate $(B)$.
\[
 \begin{aligned}
  (B)&= D((f_w^\a)^\s \p_\a f_k^\s) - (D(f_w^\a \p_\a f_k))^{(\s)}
  \\&=  \{ D(f_w^\a)^\s - (Df_w^\a)^{(\s)} \} \p_\a f_k^\s +
  \big\{(Df_w^\a)^{(\s)} \p_\a f_k^\s - (( Df_w^\a)\p_\a f_k)^{(\s)}\big\}
\\&\quad + (f_w^\a)^\s D\p_\a f_k^\s -  ( f_w^\a(D\p_\a f_k))^{(\s)}
\\& = : (B_{1})+(B_{2})+(B_{3}). \end{aligned}\]
The term $ (B_{1})$ can be estimated by the inductive assumption.
Moreover,
\[
| (B_{2})|=\Big|\int \big\{ \p_\a f _k^\s (x)-\p_\a f_k(x- \s y)\big\}Df_w^\a(x-\s y)\phi(y) dy\Big|
\le C\sigma,
\]
because $\p_\a f _k$ is Lipschitz and $Df_w^\a\in L^\infty_{\loc} $.
Finally
\[
| (B_{3})|=\Big|\int \big\{ (f_w^\a)^{(\s)} (x)-f_w^\a(x- \s y)\big\}D\p_\a f_k(x-\s y)\phi(y) dy\Big|
\le C\sigma.
\]
Indeed, since $|w|\le s-2$, $f_w^\a $ is locally Lipschitz. Moreover,   since the length of the derivative $D\p_\a$ is at most $s-1$ and $f_k\in W^{s-1,\infty}_{\loc}$, we have  $D\p_\a f_k\in L^\infty$.

Next we look at the case where $D$ has length zero, i.e. $D$ is the identity operator. We have to estimate, for $v$ with $|v|\le s$, the difference $f_v^\s - (f_v)^{(\s)}$. Write $v= kw$, where $k\in \{1,\dots, m\}$.
Thus
\[
 \begin{aligned}
  f_v^\s - (f_v)^{(\s)}&= X_k^\s f_w^\s - X_w^\s f_k^\s - (X_k f_w)^{(\s)} + (X_w f_k)^{(\s)}
 \\&  = (f_k^\a)^{(\s)} \{ \p_\a f_w^\s - (\p_\a f_w)^{(\s)}\}
+\big\{ (f_k^\a)^{(\s)}    (\p_\a f_w)^{(\s)} -      ( f_k^\a\p_\a f_w)^{(\s)} \big\}
\\& \qquad - \{ (f_w^\a)^\s \p_\a f_k^\s - (f_w^\a  \p_\a f_k)^{(\s)} \}
\\&=(S_1)+ (S_2)+ (S_3).
\end{aligned}
\]
Now $(S_1)$ can be estimated by the inductive assumption. Moreover,
\[
| (S_2)| =\Big|\int  \{ (f_k^\a)^{(\s)} (x)    -       f_k^\a (x-\s y)\} \p_\a f_w(x-\s y) \phi(y) dy \Big|\le C\sigma,
\]
because $f_k^\a$ is locally  Lipschitz continuous  and $\p_\a f_w $ is locally bounded, since  $|w|\le s-1$.
Finally
\[
| (S_3)| =\Big|\int
\big\{(f_w^\a)^\s(x)- f_w^\a(x-\s y)\big\}\p_\a f_k(x-\s y)\phi(y)dy \Big|\le C\sigma,
\]
because  $f_w$ is Lipschitz and  $\p_\a f_k$  is bounded. This concludes the proof of the the first item of
Proposition \ref{schiaccia2}.
\end{proof}
\begin{proof}[Proof  of Proposition \ref{schiaccia2},  item 2]
We need to show that, for any
$j,k\in\{1,\dots,m\}$,  $\abs{w}= s-1 $
we have the estimate $|X_j^\s X_k^\s f_w^\s|\leq C$,
uniformly in   $x\in K$ and $\s\le \s_0$.
Write
 \[\begin{aligned}
  X_{j }^\s X_{k }^\s f_w^\s &=  X_{j }^\s (X_{k } f_w)^{(\s)} +  X_{j }^\s\big(
  X_{k }^\s f_w^\s - (X_{k } f_w)^{(\s)}
 \big)
= : M+N.   \end{aligned}
\]
Now, letting $\phi_\s(\xi)=\s^{-n}\phi(\xi/\s)$, we have
\[\begin{aligned}
  M(x)&=  (f_{j}^\a)^{(\s)}(x)\p_{x_\a}\int X_k f_w   (x-\s y)\phi(y) dy
  \\&  = -\int    (f_{j}^\a)^\s(x)   X_k f_w (z) \p_{z_\a} (\phi_\s(x-z))dz
  \\&=-  \int
    f_{j }^\a(z)  X_k f_w (z)\p_{z_\a}  (\phi_\s(x-z)) dz
  \\& \quad +
   \int \big\{(f_{j}^\a)^\s(x)-
    f_{j}^\a(z) \big\} X_k f_w (z) \frac{(\p_\a \phi)_\s(x-z)}{\s} dz.
  \end{aligned}
\]
The first line can be estimated integrating by parts  by means of \eqref{chillo}. The
estimate of the second line follows from the Lipschitz continuity of the functions $f_i$.

Next we control $N$.
 \[
  \begin{aligned}
   N(x)&= (f_{j}^\a)^{(\s)}(x) \p_{x_\a}\Big\{  X_k^\s\int f_w(x-\s y)\phi(y)dy -
  \int  (X_k f_w)(x-\s y)\phi(y) dy \Big\}
\\&=(f_{j}^\a)^{(\s)}(x) \p_{x_\a}\Big\{ \int (f_k^\b)^\s(x)\p_\b f_w(z)\phi_\s(x-z)dz
\\&\qquad\qquad \qquad\qquad - \int f_k^\b(z) \p_\b f_w(z)\phi_\s(x-z)dz\Big\}
\\&= (f_{j}^\a)^{(\s)}(x)\int\Big\{ (\p_\a f_k^\b)^\s(x) \p_\b f_w (z)\phi_\s(x-z)
\\&\qquad \qquad \qquad
+ [(f_k^\b)^\s(x) - f_k^\b(z)]\p_\b f_w(z)\frac{1}{\s}(\p_\a \phi)_\s(x-z) \Big\}dz
\end{aligned}
 \]
The estimate is  concluded, because $\p_\b f_w$ is bounded, while
 $|(f_k^\b)^\s(x) - f_k^\b(z)|\le C\sigma$.
 \end{proof}




\begin{thebibliography}{AAAA}

\bibitem[BBP]{BBP}M. Bramanti, L. Brandolini, M.  Pedroni,  Basic properties of nonsmooth H\"ormander's vector fields and Poincar\'e's inequality,
preprint arXiv:0809.2872v2.
\bibitem[CM]{CM}
G. Citti, A. Montanari, Regularity properties of solutions of a
class of elliptic-parabolic nonlinear Levi type equations, Trans.
Amer. Math. Soc.  \bf 354 \rm  (2002), no. 7, 2819--2848.
 \bibitem[FP]{FP} C. Fefferman, D. H.  Phong,
  Subelliptic eigenvalue problems.  Conference
  on harmonic analysis in honor of Antoni Zygmund, Vol. I, II (Chicago, Ill., 1981),
  590--606, Wadsworth Math. Ser., Wadsworth, Belmont, CA, 1983.
 \bibitem[FL1]{FL1}
  B. Franchi,
  E.  Lanconelli, Une m\'etrique associ\'ee \`a une classe d'op\'erateurs elliptiques d\'eg\'en\'er\'es,
  (French) [A metric
  associated with a class of degenerate
   elliptic operators] Conference on linear partial and pseudodifferential operators (Torino, 1982).
     Rend. Sem. Mat. Univ.
   Politec. Torino  1983,  Special Issue, 105--114 (1984).
 \bibitem[FL2]{FL2} B. Franchi, E.  Lanconelli,
 H\"older regularity theorem for a class of linear nonuniformly
  elliptic operators with measurable coefficients,  Ann. Scuola Norm. Sup. Pisa Cl. Sci. (4)
   \bf 10 \rm (1983),  523--541.
\bibitem[FSSC]{FSSC}B. Franchi, R. Serapioni, F. Serra Cassano,
                    Approximation and imbedding theorems for
weighted Sobolev spaces associated with Lipschitz continuous vector fields,
Boll. Un. Mat. Ital. B (7) \bf 11 \rm (1997), 83--117.
\bibitem[GN]{GN} N Garofalo,  D. M. Nhieu,  Lipschitz continuity,
global smooth approximations and extension theorems for Sobolev functions in Carnot-Carath\'eodory
spaces, J. Anal. Math. \bf 74 \rm  (1998), 67--97.

\bibitem[Ha]{Ha} P. Hartman, Ordinary differential equations, John Wiley \& Sons, Inc., New York-London-Sydney
1964.
\bibitem [H]{H} L. H\"ormander, Hypoelliptic second order
differential equations, Acta Math. \bf  119 \rm (1967),  147--171.

\bibitem [J]{J}
 D. Jerison, The Poincar\'{e}
inequality for vector fields satisfying the H\"ormander condition,
Duke Math. J. \bf 53 \rm(1986), No. 2, 503--523.
 \bibitem[LM]{LM}E.  Lanconelli, D. Morbidelli, On the Poincar\'e inequality for vector fields,  Ark. Mat.  \bf 38 \rm
 (2000), 327--342.

 \bibitem[MM]{MM}A. Montanari, D.  Morbidelli, Balls defined by nonsmooth vector fields and the Poincar\'e inequality,
 Ann. Inst. Fourier (Grenoble) \bf 54 \rm (2004),  431--452.

 \bibitem[MoM]{MoM} R. Monti, D.  Morbidelli, Trace theorems for vector fields, Math. Z.  \bf 239 \rm  (2002),  747--776.

\bibitem[M]{M} D. Morbidelli, Fractional Sobolev norms and structure of Carnot--Carath\'eodory balls for H\"ormander vector fields,  Studia Math. \bf 139 \rm (2000),  213--244.
\bibitem[NSW]{NSW} A. Nagel, E. M.  Stein, S. Wainger, Balls and metrics defined by vector fields I: basic properties, Acta Math, \bf 155 \rm (1985), 103--147.
\bibitem[RaS]{RS} F. { Rampazzo}, H. J. { Sussmann}, Set--valued differential and a nonsmooth version of Chow's theorem, Proceedings of the 40th IEEE Conference on Decision and Control; Orlando, Florida, 2001.
\bibitem[RaS2]{RS2}  F. { Rampazzo},  H. J. { Sussmann}, Commutators of flow maps of nonsmooth vector fields,
 J. Differential Equations  \bf 232 \rm (2007),   134--175.
\bibitem[RoS]{RoS} L. P. Rothschild, E. M.  Stein,  Hypoelliptic differential operators and nilpotent groups,  Acta Math.
\bf 137  \rm (1976), 247--320.
\bibitem[SW]{SW}E.  T. Sawyer, R.  L. Wheeden, H\"older continuity of weak solutions to subelliptic equations with rough coefficients,
Mem. Amer. Math Soc. \bf 180 \rm (2006).
\bibitem[Sem]{Se}S.  Semmes, Finding curves on general spaces through quantitative topology, with applications to Sobolev and Poincar\'{e} inequalities,  Selecta Math. (N.S.) \bf 2 \rm (1996),  155--295.

\bibitem[Ste]{S} E. M. Stein,  Some geometrical concepts arising in harmonic analysis, GAFA 2000 (Tel Aviv, 1999).  Geom. Funct. Anal.  2000,  Special Volume, Part I, 434--453.


\bibitem[Str]{Str} B.~Street, Multi-parameter Carnot-Carath\'eodory balls and the theorem of Frobenius, preprint  	arXiv:0901.2910v3.

\bibitem[TW]{TW}
T. Tao,   J. Wright, $L^p$ improving bounds for averages along
curves, J. Amer. Math. Soc. \bf 16 \rm (2003), 605--638.
\end{thebibliography}
\end{document}